\newtheorem{thm}{Theorem}[section] 
\newtheorem{prop}[thm]{Proposition}
\newtheorem{lem}[thm]{Lemma}
\newtheorem{cor}[thm]{Corollary}
\theoremstyle{definition} 
\newtheorem{definition}[thm]{Definition} 
\newtheorem{conj}[thm]{Conjecture}
\theoremstyle{remark} 
\newtheorem{remark}[thm]{Remark} 
\renewcommand{\div}{{\operatorname{div}}}
\newcommand{\Spec}{{\operatorname{Spec}}}
\begin{document}

\title{Arithmetic Demailly Approximation Theorem}
\author{Binggang Qu}
\email{qubinggang22@bicmr.pku.edu.cn}
\address{Beijing International Center for Mathematical Research, Peking University, Beijing, China.}
\author{Hang Yin}
\email{yinhang201@mails.ucas.ac.cn}
\address{Academy of Mathematics and System Science, Chinese Academy of Science, Beijing, China.}
%
\keywords{Arakelov geometry, complex geometry, Demailly approximation, Bertini theorem, Hermitian line bundles, adelic line bundles, height, essential minimum, pseudo-effectivity.}

\begin{abstract}
We generalize the Demailly approximation theorem from complex geometry to Arakelov geometry.

As an application, let $X/\mathbb{Q}$ be an integral projective variety and $\overline N$ be an adelic line bundle on $X$, we prove that $\operatorname{ess}(\overline N) \geq 0$ $\Longrightarrow $ $\overline N$ pseudo-effective. This was proved in \cite{ballay_successive_2021}, assuming $\overline{N}$ relatively semipositive.

We show in the appendix that the above assertion is also true for adelic line bundles on quasi-projective varieties, under the framework of \cite{yuan_adelic_2022}.
\end{abstract}

\maketitle

\vspace*{6pt} \setcounter{tocdepth}{1} \tableofcontents 

\section{Introduction}
\label{sec:introduction}

\subsection{Background}
Let $X/\mathbb{Q}$ be an integral projective variety and $\overline{L}$ be an adelic line bundle. $\overline{L}$ induces an height function $h_{\overline{L}}$ on $X(\overline{\mathbb{Q}})$, which is of central importance in the study of Diophantine geometry.

It is natural to believe that the positivity of the height function $h_{\overline{L}}$ is correlated with the positivity of the line bundle $\overline{L}$. In fact, in \cite{burgos_gil_successive_2015} Burgos Gil, Philippon and Sombra confirmed that $\operatorname{ess}(\overline N) \geq 0$ $\Longleftrightarrow $ $\overline N$ pseudo-effective in toric cases. Later, Ballay showed for general (non-toric) varieties that
\begin{thm}[\cite{ballay_successive_2021}, Theorem 1.1]
	Let $X/\mathbb Q$ be an integral projective variety and $\overline N$ be an adelic line bundle. Assume $\overline N$ is relatively semipositive and $N$ is big, then $\operatorname{ess}(\overline N) \geq 0$ $\Longleftrightarrow $ $\overline N$ pseudo-effective.
\end{thm}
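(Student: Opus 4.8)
The plan is to prove the two implications separately; only one of them is substantial. Since $N$ is big, pseudo-effectivity of $\overline N$ may be read either as ``$\overline N+\overline A$ is arithmetically big for every arithmetically ample $\overline A$'' or as ``$\overline N$ is a uniform limit of arithmetically big metrics on $N$'', and I will use whichever is convenient.

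\emph{Pseudo-effective $\Longrightarrow\operatorname{ess}(\overline N)\ge 0$.} Write $\overline N=\lim_i\overline N_i$ with $\overline N_i=\overline N\otimes\overline{\mathcal O}(\varphi_i)$, $\|\varphi_i\|_{\sup}\to 0$, each $\overline N_i$ arithmetically big. Two elementary facts finish it. First, an arithmetically big $\overline M$ has $\operatorname{ess}(\overline M)\ge 0$: for $n\gg 0$ one has $\widehat h^0(n\overline M)>0$, so $H^0(X,nM)$ contains a nonzero section of sup-norm $\le 1$, whose non-vanishing locus is a proper Zariski-closed subset on whose complement $h_{\overline M}\ge 0$. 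Second, $\operatorname{ess}(\cdot)$ is $1$-Lipschitz under uniform changes of metric, since $|h_{\overline M\otimes\overline{\mathcal O}(\varphi)}-h_{\overline M}|\le\|\varphi\|_{\sup}$ pointwise. Hence $\operatorname{ess}(\overline N)\ge\operatorname{ess}(\overline N_i)-\|\varphi_i\|_{\sup}\ge-\|\varphi_i\|_{\sup}\to 0$. (Relative semipositivity is not used here.)

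\emph{$\operatorname{ess}(\overline N)\ge 0\Longrightarrow$ pseudo-effective.} Fix an arithmetically ample $\overline A$ and set $\overline M:=\overline N+\overline A$. Super-additivity of the essential minimum — immediate from $h_{\overline P+\overline Q}=h_{\overline P}+h_{\overline Q}$ — together with $\operatorname{ess}(\overline A)>0$ yields $\operatorname{ess}(\overline M)\ge\operatorname{ess}(\overline N)+\operatorname{ess}(\overline A)>0$; also $\overline M$ is relatively semipositive and $M$ is big. Since $\overline A$ is arbitrary, it suffices to prove: \emph{if $\overline M$ is relatively semipositive with $M$ big and $\operatorname{ess}(\overline M)>0$, then $\overline M$ is arithmetically big.} I would argue by induction on $d=\dim X$. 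For $d=0$ this is the statement that a Hermitian lattice of positive normalized Arakelov degree $\overline M$ has $\widehat h^0(n\overline M)$ growing linearly in $n$, i.e. $\widehat{\operatorname{vol}}(\overline M)>0$. For $d\ge 1$, fix $Y_0\subsetneq X$ closed with $h_{\overline M}\ge\delta>0$ off $Y_0$; using bigness of $M$, choose $m_0\gg 0$ and $\sigma\in H^0(X,m_0M)$ whose divisor $D$ is a prime divisor not contained in $Y_0$ (Bertini, on a projective model of $X$). Then $\overline M|_D$ is relatively semipositive, $M|_D$ is big, and $\operatorname{ess}(\overline M|_D)\ge\delta$ because $D\setminus(D\cap Y_0)$ is dense in $D$; by induction $\overline M|_D$ is arithmetically big, so $\widehat h^0(n\overline M|_D)\ge c\,n^{d}$ for $n\gg 0$ and some $c>0$. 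Feeding this into the restriction sequences
\[
0\longrightarrow\mathcal O_X((n-m_0)M)\xrightarrow{\,\cdot\sigma\,}\mathcal O_X(nM)\longrightarrow\mathcal O_D(nM|_D),
\]
comparing the integral lattices of global sections with their sup- and quotient-norms, and iterating $\sim n/m_0$ times, one obtains $\widehat h^0(n\overline M)\ge c'\,n^{d+1}-\mathrm{Err}(n)$ with $c'>0$; if $\mathrm{Err}(n)=o(n^{d+1})$ this gives $\widehat{\operatorname{vol}}(\overline M)>0$, hence arithmetic bigness.

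The crux, and the only place the hypotheses really enter, is the bound on $\mathrm{Err}(n)$: multiplication by $\sigma$ distorts norms by a factor governed by $\log\|\sigma\|_{\sup}$, which a priori is only controlled by the possibly large sup-norms of sections of $M$, so a crude estimate leaves $\mathrm{Err}(n)$ of order $n^{d+1}$, matching the main term. It is tamed by the arithmetic Hilbert--Samuel theorem (Gillet--Soul\'e, Abbes--Bouche, Zhang), available precisely because $\overline M$ is relatively semipositive, which controls $\widehat h^0$ of the intermediate twists up to lower order, together with the strict inequality $\operatorname{ess}(\overline M)>0$, which leaves room to keep $\|\sigma\|_{\sup}$ small — one may first replace $\overline M$ by $\overline M\otimes\overline{\mathcal O}(-\delta/2)$, still relatively semipositive with big underlying bundle and positive essential minimum. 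Thus relative semipositivity is genuinely used by this argument, through Hilbert--Samuel, even though, as the present paper shows, the conclusion persists without it once Hilbert--Samuel is replaced by an arithmetic Demailly-type approximation.
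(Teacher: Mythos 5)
Note first that the paper never proves this statement: it is quoted verbatim as Ballay's Theorem~1.1 and serves only as background, so there is no ``paper's own proof'' to compare against. The paper's contribution (Corollary~\ref{main2}) is the sharper claim with the relative semipositivity hypothesis removed in the substantial direction, obtained through the arithmetic Demailly approximation theorem and Ikoma's BDPP rather than through Ballay's Hilbert--Samuel route. Your sketch therefore follows Ballay's strategy, not the paper's.

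Within that sketch there are two genuine gaps. In the easy direction you assert that, because $N$ is big, $\overline N$ pseudo-effective is equivalent to ``$\overline N$ is a uniform limit of arithmetically big metrics on $N$,'' i.e.\ that $\overline N(\varepsilon)=\overline N\otimes\overline{\mathcal O}(\varepsilon)$ is arithmetically big for every $\varepsilon>0$. This is not a rephrasing of the definition used in the paper: pseudo-effectivity says $\overline N+\tfrac 1n\overline B$ is big for every \emph{big} $\overline B$, and $\overline{\mathcal O}(\varepsilon)$ is not big (its underlying line bundle is trivial), so nothing in the definition forces $\overline N(\varepsilon)$ to be big. The implication is in fact true, but its justification is nontrivial -- one needs continuity/differentiability of $\widehat{\operatorname{vol}}$ in the direction of $\overline{\mathcal O}(1)$ for $N$ big, or an inequality of Zhang/Moriwaki type relating $\operatorname{ess}$ and $\widehat{\operatorname{vol}}$ -- and this is precisely where the hypothesis ``$N$ big'' does real work in Ballay's argument, so it cannot be waved through as a reading of the definition. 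In the hard direction your reduction to ``$\overline M$ relatively semipositive, $M$ big, $\operatorname{ess}(\overline M)>0\Rightarrow\overline M$ arithmetically big'' and the super-additivity step are fine, and the inductive Bertini/restriction-sequence strategy correctly identifies arithmetic Hilbert--Samuel as the point where relative semipositivity is used; but, as you say yourself, the entire content lies in bounding $\mathrm{Err}(n)$ by $o(n^{d+1})$, and that estimate is left unproved. As a proof this is therefore incomplete; as a correct identification of Ballay's strategy and of where the two hypotheses enter, it is accurate, and your closing observation -- that the present paper replaces Hilbert--Samuel by a Demailly-type approximation to drop the semipositivity hypothesis -- matches the paper's point of view.
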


\begin{remark}
	In fact, with the condition ``$N$ is big'' Ballay was able to prove $\overline N$ pseudo-effective $\Longrightarrow$ $\operatorname{ess}(\overline{N}) \geq 0$ and with the condition ``$\overline{N}$ relatively semipositive'' he was able to prove $\operatorname{ess}(\overline{N}) \geq 0$ $\Longrightarrow$ $\overline N$ pseudo-effective.
\end{remark}

In a private conversation, Yuan made the following conjecture that both conditions are unnecessary.
\begin{conj}[Yuan]
	Let $X/\mathbb{Q}$ be an integral projective variety and $\overline N$ be an adelic line bundle. Then $\operatorname{ess}(\overline N) \geq 0$ $\Longleftrightarrow $ $\overline N$ pseudo-effective.
\end{conj}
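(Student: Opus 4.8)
The plan is to prove both implications; the non-trivial content is the second, and it rests on the Arithmetic Demailly Approximation Theorem together with the relatively semipositive case of \cite{ballay_successive_2021}.

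\emph{The implication $\overline N$ pseudo-effective $\Longrightarrow\operatorname{ess}(\overline N)\ge 0$.} This needs no bigness hypothesis. If $\overline N$ is a limit of effective adelic $\mathbb Q$-line bundles $\overline E_k$, then each $\overline E_k$ carries a global section whose $v$-adic norm is $\le 1$ at every place, so $h_{\overline E_k}\ge 0$ outside the support of that section, whence $\operatorname{ess}(\overline E_k)\ge 0$. One then checks that $\{\overline L:\operatorname{ess}(\overline L)\ge 0\}$ is closed in the topology defining pseudo-effectivity (equivalently, $\operatorname{ess}$ is upper semicontinuous there); since this set contains the cone of effective adelic $\mathbb Q$-line bundles it contains its closure, and therefore $\operatorname{ess}(\overline N)\ge 0$. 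This is exactly the place where Ballay's hypothesis ``$N$ big'' can be dropped.

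\emph{The implication $\operatorname{ess}(\overline N)\ge 0 \Longrightarrow \overline N$ pseudo-effective.} Fix an ample adelic line bundle $\overline A$ and $\varepsilon>0$; since the pseudo-effective cone is closed, it suffices to prove that $\overline M:=\overline N+\varepsilon\overline A$ is pseudo-effective, and then let $\varepsilon\to 0$. I first record two facts. (i) $\operatorname{ess}(\overline N)\ge 0$ forces the underlying line bundle $N$ to be pseudo-effective: otherwise, by Boucksom--Demailly--P\u{a}un--Peternell duality there is a covering family of curves $C$ with $\deg(N|_C)<0$, and on each such curve $\overline N|_C$ has negative degree, so $\operatorname{ess}(\overline N|_C)=-\infty$ even after deleting finitely many points; as the $C$'s cover $X$ this yields $\operatorname{ess}(\overline N)=-\infty$, a contradiction. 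Hence $M=N+\varepsilon A$ is big. (ii) By superadditivity and positive homogeneity of the essential minimum, $\operatorname{ess}(\overline M)\ge \operatorname{ess}(\overline N)+\varepsilon\operatorname{ess}(\overline A)>0$. Now apply the Arithmetic Demailly Approximation Theorem to $\overline M$: for a suitable birational morphism $\pi\colon X'\to X$ and an integer $m\gg 0$ there is a decomposition $m\,\pi^{*}\overline M=\overline P+\overline E$ in $\widehat{\operatorname{Pic}}(X')_{\mathbb Q}$ with $\overline E$ effective, $\overline P$ relatively semipositive (carrying the Bergman/quotient-type metric built from $H^{0}(X',m\pi^{*}M)$ with the sup-norms induced by $m\pi^{*}\overline M$), $P$ big, and $\tfrac1m\operatorname{ess}(\overline P)\to\operatorname{ess}(\overline M)$ as $m\to\infty$ (so $\operatorname{ess}(\overline P)>0$ for $m$ large). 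Since $\overline P$ is relatively semipositive and $\operatorname{ess}(\overline P)\ge 0$, the relatively semipositive half of \cite[Theorem 1.1]{ballay_successive_2021} gives that $\overline P$ is pseudo-effective; adding the effective $\overline E$, the bundle $m\,\pi^{*}\overline M$ is pseudo-effective, and since $\pi$ is birational (so $\pi_{*}\pi^{*}=\operatorname{id}$ and pseudo-effectivity descends) $\overline M$ is pseudo-effective. Letting $\varepsilon\to 0$ finishes the argument.

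\emph{Main obstacle.} Everything hinges on the Arithmetic Demailly Approximation Theorem, and within it on the assertion that passing to the base-point-free model equipped with its canonical semipositive metric loses no arithmetic positivity, i.e.\ $\tfrac1m\operatorname{ess}(\overline P)\to\operatorname{ess}(\overline M)$. Proving this requires simultaneous control, at every place of $\mathbb Q$, of the Bergman/quotient metrics attached to $H^{0}(X',\ell\,m\pi^{*}M)$ endowed with the sup-norms of $\overline M$: at the archimedean place this is Demailly's Bergman-kernel estimate (via $L^{2}$-methods and Ohsawa--Takegoshi extension), at the finite places it is the analogous, more elementary estimate coming from integral models, and globally one must translate these metric bounds into bounds on the Harder--Narasimhan slopes of the arithmetic lattices $H^{0}(X',\ell\,m\pi^{*}M)$ via geometry of numbers, invoking the comparison between essential minimum and asymptotic maximal slope together with arithmetic Fujita approximation. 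Bigness of $M$ is used here precisely to guarantee that these lattices have maximal rank growth $\asymp\ell^{\dim X}$, which is what makes the slope estimates sharp enough to recover $\operatorname{ess}(\overline M)$ in the limit.
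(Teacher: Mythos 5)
You are attempting to prove the full biconditional, but the paper only proves the direction $\operatorname{ess}(\overline N)\ge 0\Rightarrow\overline N$ pseudo-effective; the converse, without the hypothesis that $N$ is big, remains open (Ballay uses bigness precisely for that direction). Your sketch of the converse relies on the claim that $\{\overline L:\operatorname{ess}(\overline L)\ge 0\}$ is closed in the topology governing pseudo-effectivity, which is exactly the kind of semicontinuity statement whose only known proof (Ballay's) requires $N$ big. Without that input your argument has a real gap, and you should not advertise that ``this is exactly the place where Ballay's hypothesis can be dropped.''

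For the direction the paper does prove, your route is genuinely different from — and, as written, does not close — the argument in the paper. You read ``Arithmetic Demailly Approximation Theorem'' as a Fujita/Zariski-type statement: $m\pi^{*}\overline M=\overline P+\overline E$ with $\overline P$ relatively semipositive, $\overline E$ effective, and $\tfrac1m\operatorname{ess}(\overline P)\to\operatorname{ess}(\overline M)$, so that the problem reduces to the relatively semipositive half of Ballay. But the theorem actually proved here is a Bertini-type statement about sections: given an ample Hermitian $\overline{\mathcal L}$ and a closed $Y\subseteq\mathcal X_{\mathbb Q}$, there are integral sections $s_{n_i}\in H^{0}(\mathcal X,n_i\mathcal L)$ with $\tfrac1{n_i}\log\|s_{n_i}\|\to 0$ in $L^{1}$, with $\operatorname{div}(s_{n_i})_{\mathbb Q}$ smooth and avoiding $Y$, and with no vertical components. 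These sections are then used to cut down dimension and compute intersection numbers, yielding $\operatorname{ess}(\overline{\mathcal N})\le\inf\overline{\mathcal L}^{d}\cdot\pi^{*}\overline{\mathcal N}/\mathcal L_{\mathbb Q}^{d}$, which together with Ikoma's arithmetic BDPP gives pseudo-effectivity directly, with no appeal to Ballay's relatively semipositive theorem. That bypass is in fact the point: the $L^{1}$-control on $\log\|s_n\|$ makes the archimedean contribution vanish for arbitrary $\overline{\mathcal N}$, so the semipositivity hypothesis is never needed. Your plan, by contrast, rests entirely on the unproved assertion $\tfrac1m\operatorname{ess}(\overline P)\to\operatorname{ess}(\overline M)$, which you yourself flag as the ``main obstacle''; since no such decomposition or convergence is established in the paper (or, to my knowledge, elsewhere), this is a missing step rather than a different proof. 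Even if one granted it, the argument would still depend on Ballay's theorem, whereas the paper's whole contribution is to avoid it. Finally, a small point: step (i) in your argument (pseudo-effectivity of $N$ via BDPP and covering curves with $\deg(N|_{C})<0\Rightarrow\operatorname{ess}(\overline N|_{C})=-\infty$) is plausible but would need a few words about the covering family being defined over a number field and the restriction of heights to $C$; in any case the paper never needs this step.
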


In this article, we will develop a new arithmetic Bertini type theorem and use it to prove (half of) Yuan's conjecture that $\operatorname{ess}(\overline N) \geq 0$ $\Longrightarrow $ $\overline N$ pseudo-effective.

\subsection{Function field analogue}
Let us first consider the function field analogue, which follows after BDPP theorem and Bertini theorem.
\begin{thm}
	Let $\pi: \mathcal{X} \longrightarrow S$ be an integral projective variety, fibered over a curve $S$, over the field of complex numbers $\mathbb{C}$. Let $\mathcal{N}$ on $\mathcal{X}$ be a line bundle. Then $\operatorname{ess}(\overline{N}) \geq 0 \Longrightarrow$ $\mathcal{N}$ pseudo-effective.
\end{thm}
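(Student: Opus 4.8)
The plan is to deduce the statement from the BDPP theorem together with Bertini's irreducibility theorem, arguing by contraposition. Recall that BDPP characterizes pseudo-effective divisor classes on a smooth projective variety $X$ as exactly those having non-negative intersection with every movable curve class, and that the movable cone is the closed convex cone generated by the strongly movable classes $\mu_{*}(A_{1}\cdots A_{\dim X-1})$, where $\mu\colon X'\to X$ runs over projective birational morphisms and the $A_{i}$ over ample divisors on $X'$. Since both the pseudo-effectivity of $\mathcal N$ and the essential minimum $\operatorname{ess}(\mathcal N)$ are unaffected by pulling $\mathcal N$ back along a resolution of $\mathcal X$, we may assume all varieties below are smooth. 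Write $d=\dim\mathcal X$; the case $d=1$ is immediate ($\mathcal X$ is then a curve and both sides reduce to $\deg\mathcal N\ge 0$), so assume $d\ge 2$. Assuming $\mathcal N$ is not pseudo-effective, the goal is to produce a Zariski-dense set of closed points of the generic fibre of $\pi$ all of whose $\mathcal N$-heights equal one fixed negative number, which is incompatible with $\operatorname{ess}(\mathcal N)\ge 0$.

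First I would reduce to a complete intersection curve on $\mathcal X$ itself. Since $\mathcal N\cdot(-)$ is linear and continuous on curve classes, if $\mathcal N$ is not pseudo-effective there is a strongly movable class with $\mu^{*}\mathcal N\cdot A_{1}\cdots A_{d-1}<0$. I would then replace $(\mathcal X,\pi,\mathcal N)$ by $(\mathcal X',\pi\circ\mu,\mu^{*}\mathcal N)$: this is harmless, because $\mathcal X'\to S$ is still a projective variety fibred over the curve $S$, and $\operatorname{ess}$ depends only on the generic fibre up to a proper closed subset, hence is a birational invariant. Replacing each $A_{i}$ by a sufficiently large multiple, we may further assume the $A_{i}$ are very ample while still $\mathcal N\cdot A_{1}\cdots A_{d-1}<0$.

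Next, by Bertini's irreducibility theorem a general complete intersection $C:=H_{1}\cap\cdots\cap H_{d-1}$ with $H_{i}\in|A_{i}|$ is a smooth integral curve, and $C\cdot F=A_{1}\cdots A_{d-1}\cdot F>0$, where $F$ is the numerical class of a fibre of $\pi$ (a nonzero effective divisor, since the fibres have dimension $d-1\ge 1$); in particular $C$ dominates $S$, i.e.\ $C$ is a multisection. Hence $C$ determines a single closed point $P_{C}$ of the generic fibre $\mathcal X_{\eta}$, with residue field $\kappa(P_{C})=\mathbb C(C)$ of degree $\deg(C/S)=C\cdot F$ over $K:=\mathbb C(S)$, and its geometric height is
\[
h_{\mathcal N}(P_{C})=\frac{\deg(\mathcal N|_{C})}{[\kappa(P_{C}):K]}=\frac{\mathcal N\cdot C}{C\cdot F}=\frac{\mathcal N\cdot A_{1}\cdots A_{d-1}}{A_{1}\cdots A_{d-1}\cdot F}=:-\varepsilon<0,
\]
a quantity depending only on the numerical classes, hence the same for every such general $C$.

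Finally I would spread this negative value over a dense family. Let $T=|A_{1}|\times\cdots\times|A_{d-1}|$, let $T^{\circ}\subseteq T$ be the dense open locus of tuples for which $C_{t}$ is a smooth integral multisection as above, and let $\mathcal I=\{(t,x)\in T\times\mathcal X:\ x\in C_{t}\}$. As the $A_{i}$ are very ample, $\mathcal I\to\mathcal X$ is a fibration with fibres $\prod_{i}\mathbb P^{\dim|A_{i}|-1}$, so $\mathcal I$ is irreducible and dominates $\mathcal X$; since $\mathcal I\to T$ is dominant, the part of $\mathcal I$ lying over $T^{\circ}$ is dense in $\mathcal I$ and hence still dominates $\mathcal X$. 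Therefore $\bigcup_{t\in T^{\circ}}C_{t}$ is Zariski-dense in $\mathcal X$, and consequently $\{P_{C_{t}}:t\in T^{\circ}\}$ is Zariski-dense in $\mathcal X_{\eta}$, for a proper closed subset of $\mathcal X_{\eta}$ containing all the $P_{C_{t}}$ would have proper closed closure in $\mathcal X$ containing every $C_{t}$. Each of these points has $\mathcal N$-height $-\varepsilon$, so every nonempty open subset of $\mathcal X_{\eta}$ contains a point of height $-\varepsilon$, whence $\operatorname{ess}(\mathcal N)\le-\varepsilon<0$, contradicting the hypothesis. The one substantive point, and the place I expect the only real work, is this last step: failure of pseudo-effectivity by itself yields only negativity against a single movable class, and Bertini irreducibility (which keeps the cutting curves integral, so that each carries a well-defined point of the generic fibre and a well-defined height) together with the incidence-variety dimension count is exactly what turns that into a Zariski-dense family of negative-height multisections.
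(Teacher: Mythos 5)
Your proof is correct and follows essentially the same strategy as the paper: argue by contraposition, invoke BDPP to produce an ample intersection class meeting $\mathcal N$ negatively, and use Bertini irreducibility to cut out integral horizontal multisections, whose corresponding closed points of the generic fibre have negative height. The only cosmetic differences are that the paper first twists $\mathcal N$ by $\varepsilon\,\pi^*D$ to lock in a fixed negative slack and then, for each fixed $Z$, produces a single curve not lying over $Z$, whereas you read off the fixed negative height $-\varepsilon=\mathcal N\cdot A_1\cdots A_{d-1}/(A_1\cdots A_{d-1}\cdot F)$ directly from numerical equivalence and establish Zariski-density of the whole family of cut points via the incidence variety; your treatment of the ``avoid $Z$'' step is, if anything, a cleaner formulation of what the paper describes as iterated Bertini.
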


\begin{proof}
	Say $\dim \mathcal{X}=n+1$. Suppose conversely that $\mathcal{N}$ is not pseudo-effective. Then there exists $\varepsilon>0$ s.t. $\mathcal{N}(\varepsilon)$ is also not pseudo-effective. Here $\mathcal{N}(\varepsilon) = \mathcal{N} + \varepsilon \, \pi^* D$ where $D$ is any divisor on $S$ of degree $1$. Then by BDPP theorem, and because essential minimum is birationally invariant, we may assume there exists an ample line bundle on $\mathcal{X}$ (rather than on some $\mathcal{X}^\prime$ birational to $\mathcal{X}$) such that $\mathcal{L}^n \cdot \mathcal{N}(\varepsilon)<0$.
	
	Let $X \longrightarrow \Spec(\mathbb{C}(S))$ be the generic fiber of $\pi: \mathcal{X} \longrightarrow S$. For any closed subset $Z \subseteq X$, by Bertini theorem, there exists a section $s$ of $\mathcal{L}$ such that $\operatorname{div}(s)$ is irreducible (hence horizontal), $\operatorname{div}(s)_{\mathbb{C}(S)}$ is smooth and $\operatorname{div}(s)_{\mathbb{C}(S)}$ does not meet $Z$. Repeat this procedure $n$ times, we produce a horizontal curve $C$ that does not meet $Z$ and $C \cdot \mathcal{N}(\varepsilon)<0$. Since $Z$ is arbitrary, we see that $\operatorname{ess}(\mathcal{N}(\varepsilon)) = \operatorname{ess}(\mathcal{N}) + \varepsilon \leq 0$. Then $\operatorname{ess}(\mathcal{N}) \leq -\varepsilon$, contradiction.
\end{proof}

\subsection{Motivation}
Now back to number fields, the idea is to mimic the above proof in function field cases.

Suppose that $\mathcal{X}/\mathbb{Z}$ is an arithmetic surface and $\overline{\mathcal{N}}$ is a Hermitianl line bundle. Again, to show $\mathrm{ess}(\overline{\mathcal{N}})\geq 0$ implies $\overline{\mathcal{N}}$ pseudo-effective, by arithmetic BDPP theorem of Ikoma (\cite{ikoma_concavity_2015}, Theorem 6.4), essentially we need to show that $\overline{\mathcal{L}}\cdot \overline{\mathcal{N}}\geq 0$ for any ample Hermitian line bundle $\overline{\mathcal{L}}$.

Now if we have a sequence of good sections $s_n \in H^0(\mathcal{X},n\mathcal{L})$ (which will be produced by our arithmetic Bertini theorem), such that
\begin{itemize}
	\item $\tfrac1n \log\|s_n\| \longrightarrow 0$ in $L^1$-sense.
	\item $\div(s_n)$ is irreducible (then $\div(s_n)$ is horizontal, say $\div(s_n)=\big\{x_n \big\}$ where $x_n$ is a closed point of the generic fiber $\mathcal{X}_\mathbb{Q}$).
\end{itemize}

Then we use these $s_n$ to compute the arithmetic intersection number and see
\begin{flalign*}
	\quad \quad \quad 
	n\overline{\mathcal{L}} \cdot \overline{\mathcal{N}} &= \widehat{\deg} \left ( \overline{\mathcal{N}}|_{\operatorname{div}(s_n)} \right ) - \int_{\mathcal{X}(\mathbb{C})} \log\|s_n\| \; c_1(\overline{\mathcal{N}}).
	&&
\end{flalign*}
Note that $x_n$ is of degree $n\deg(\mathcal{L}_\mathbb{Q})$ and $h_{\overline{\mathcal{N}}}(x_n) = \widehat{\deg} \left ( \overline{\mathcal{N}}|_{\operatorname{div}(s_n)} \right ) / \deg(x_n)$, so divide by $n\deg(\mathcal{L}_\mathbb{Q})$ on both sides we see
\begin{flalign*}
	\quad \quad \quad &
	\frac{\overline{\mathcal{L}} \cdot \overline{\mathcal{N}}}{\deg(\mathcal{L}_\mathbb{Q})} = h_{\overline{\mathcal{N}}}(x_n) - \frac1{n\deg(\mathcal{L}_\mathbb{Q})} \int_{\mathcal{X}(\mathbb{C})} \log\|s_n\| \; c_1(\overline{\mathcal{N}}).
	&&
\end{flalign*} By assumption $\tfrac1n \log\|s_n\| \longrightarrow 0$ in $L^1$-sense so when $n \longrightarrow \infty$, the second term in the right hand side tends to zero and we have
\begin{flalign*}
	\quad \quad \quad &
	\frac{\overline{\mathcal{L}} \cdot \overline{\mathcal{N}}}{\deg(\mathcal{L}_\mathbb{Q})} = \liminf_{n \longrightarrow \infty} h_{\overline{\mathcal{N}}}(x_n) \geq \operatorname{ess}(\overline{\mathcal{N}}) \geq 0.
	&&
\end{flalign*}
The latter inequality is because $\mathcal{X}_\mathbb{Q}$ is a curve so the sequence $\big\{ x_n \big\}$ is automatically generic (meaning that it does not have a subsequence contained in a proper closed subvariety). We win.

\subsection{Main theorems}
\begin{thm}[arithmetic Demailly approximation, Theorem \ref{full arithmetic Demailly approximation}]
	Let $\mathcal{X}/\mathbb{Z}$ be an arithmetic variety, normal and generically smooth. Let $\overline{\mathcal{L}}$ be an ample Hermitian line bundle. Let $Y$ be a closed subset of $\mathcal{X}_\mathbb{Q}$.
	Then there exists $s_{n_i} \in H^0(\mathcal{X},n_i\mathcal{L})$ such that
	\begin{itemize}
		\item $\|s_{n_i}\|_\infty \longrightarrow 1$ and $\frac1{n_i} \log\|s_{n_i}\| \longrightarrow 0$ in $L^1$-topology.
		\item $\operatorname{div}(s_{n_i})_\mathbb{Q}$ is smooth and does not contain any irreducible components of $Y$.
		\item $\operatorname{div}(s_{n_i})$ has no vertical components.
	\end{itemize}
\end{thm}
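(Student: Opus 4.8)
The plan is to prove the arithmetic Demailly approximation theorem by combining the classical Demailly approximation on the complex fiber with an arithmetic Bertini-type argument to control the divisors of the sections. First I would recall the algebraic/complex-geometric input: for an ample Hermitian line bundle $\overline{\mathcal{L}}$, one can build sections of $n\mathcal{L}$ whose logarithmic norms approximate zero. More precisely, by the theory of Monge--Amp\`ere equations and the solution of the equilibrium problem (or by Demailly's original approximation of psh functions via Bergman kernels), the supremum norm $\|\cdot\|_n$ on $H^0(\mathcal{X}(\mathbb{C}), n\mathcal{L})$ has the property that $\frac1n \log$ of the Bergman kernel converges uniformly (or in $L^1$) to the zero function coming from the positivity of $c_1(\overline{\mathcal{L}})$; equivalently $\frac1n \widehat{h}^0 \to \widehat{\mathrm{vol}}/(\dim) > 0$, so a positive proportion of lattice points in $H^0(\mathcal{X}, n\mathcal{L})$ have norm close to $1$. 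The upshot is a plentiful supply of small sections $s$ with $\|s\|_\infty \to 1$ and $\frac1n\log\|s\|\to 0$ in $L^1$.

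Next I would address the Bertini conditions. The set of sections $s \in H^0(\mathcal{X}_\mathbb{Q}, n\mathcal{L}_\mathbb{Q})$ such that $\div(s)_\mathbb{Q}$ is smooth and meets $Y$ properly forms a Zariski-dense open subset for $n \gg 0$, by the classical Bertini theorem over $\mathbb{Q}$ (or over $\overline{\mathbb{Q}}$ and then descending). Similarly, avoiding vertical components: a section $s \in H^0(\mathcal{X}, n\mathcal{L})$ has $\div(s)$ containing a vertical fiber $\mathcal{X}_p$ exactly when $s \in p \cdot H^0(\mathcal{X}, n\mathcal{L})$ (using normality so that $H^0(\mathcal{X}, n\mathcal{L})$ is a saturated lattice, or at least after a mild modification), so sections with no vertical component are those not divisible by any prime $p$ — a condition satisfied by a density-one subset of the lattice. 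The crux is then a counting argument: I would estimate, via the geometry of numbers (Minkowski-type counting in the lattice $H^0(\mathcal{X}, n\mathcal{L})$ with respect to the sup-norm), that the number of lattice points of norm $\le 1$ grows like $e^{c n^{d}}$ with $c > 0$ (this is where ampleness and $\frac1n\widehat{h}^0 \to \widehat{\mathrm{vol}} > 0$ enter), while the "bad" sublocus — sections whose $\mathbb{Q}$-divisor is singular, or meets $Y$ improperly, or which are divisible by some prime — is contained in a union of subspaces/sublattices of strictly smaller covolume-growth or bounded codimension, hence contains only $o(e^{cn^d})$ lattice points of norm $\le 1$. Therefore for infinitely many $n = n_i$ a good section $s_{n_i}$ of norm close to $1$ exists, and by the equidistribution/Demailly input its $\frac1{n_i}\log\|s_{n_i}\|$ tends to $0$ in $L^1$.

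The main obstacle I expect is reconciling two tensions simultaneously: the Bertini-type conditions are Zariski-open (so automatically "large" in the algebraic sense), but they must be satisfied by a section that is also small in the Arakelov/metric sense, and a priori the small sections might conspire to all lie in the bad locus. Overcoming this requires the quantitative comparison between (i) the exponential growth rate of $\#\{s : \|s\| \le 1\}$, governed by the arithmetic volume $\widehat{\mathrm{vol}}(\overline{\mathcal{L}}) > 0$, and (ii) the much smaller growth of lattice points in any fixed-codimension subspace or in $p H^0$; one has to show the bad locus is genuinely "thin" in this arithmetic sense. A secondary technical point is handling the $L^1$-convergence of $\frac1{n_i}\log\|s_{n_i}\|$ rather than mere boundedness: here I would invoke that the Bergman-type upper envelope $\frac1n \log \sup_{\|s\|_n \le 1} |s|^2 \to 0$ uniformly (Demailly's regularization / Tian's theorem), which forces any unit-norm section to have log-norm that is uniformly bounded above by $o(n)$, and then a compactness/normal-families argument in $L^1$ upgrades this to the stated convergence after passing to a subsequence. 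The remaining verifications — that $\div(s_{n_i})$ is genuinely horizontal once vertical components are excluded, and that properness with respect to $Y$ is preserved — are routine once the counting is in place.
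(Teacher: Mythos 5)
Your proposal correctly identifies the central tension (the Bertini conditions are generic while the metric conditions seem to single out few sections), but the mechanism you propose for resolving it has a genuine gap at the most delicate point: producing sections with $\tfrac1n\log\|s_n\|\to 0$ in $L^1$.

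The main issue is that the $L^1$-convergence property is \emph{not} a generic property of small sections, and your "compactness / normal-families'' argument does not establish it. Compactness of $\operatorname{PSH}_0(X,\omega)$ only lets you extract a subsequence of $\tfrac1n\log\|s_n\|$ converging in $L^1$ to \emph{some} $\omega$-psh function with $\sup = 0$; nothing forces that limit to be the zero function. (Concretely, $s_n = \sigma^n$ for a fixed section $\sigma$ gives $\tfrac1n\log\|s_n\| = \log\|\sigma\|$, a fixed nonzero psh function, while $\|s_n\|_\infty = 1$ holds after normalization.) Likewise, the Bergman envelope converging to $0$ gives an \emph{upper} bound $\tfrac1n\log\|s\|\leq o(1)$ for unit-norm sections, which is trivial; the content of the $L^1$-convergence is a control from \emph{below}, which the envelope does not supply. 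Your claim that a positive proportion of lattice points of norm near $1$ satisfy the $L^1$ condition is therefore unsubstantiated, and your density argument, which relies on intersecting this set with the Bertini-good set, does not go through.

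The paper takes a different route precisely to bypass this: it first manufactures a \emph{specific} sequence of sections achieving the $L^1$-convergence (sums $\sum_i\sigma_i^\ell$ of powers of an orthonormal basis, so that $\tfrac1\ell\log\|\cdot\|$ tends to $\log\max_i\|\sigma_i\|^2$, whose normalized log tends to zero by Bouche--Tian; then reality via products with conjugates; then an integral lattice point nearby via Zhang's Nakai--Moishezon, Theorem \ref{Zhang Nakai-Moishezon}). The crucial stability step (Lemma \ref{k^n then ok}, resting on the psh compactness Lemma \ref{equivalent convergence lemma}: for $\omega$-psh functions bounded above, convergence in measure is equivalent to $L^1$-convergence) shows that \emph{any} integral section within $\|\cdot\|_\infty$-distance $k^n$ of the chosen one still satisfies the $L^1$-convergence. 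This converts the non-generic $L^1$ condition into an open condition relative to exponentially small balls, and one then does Charles-type density counting inside $B(s_{n_i},k^{n_i})$ rather than in $\{\|s\|\leq 1\}$. That translation of center from $0$ to $s_{n_i}$ is exactly the mild generalization of Charles' Lemma \ref{Charles bigger ball lemma} and Theorem \ref{Charles Counting} that the paper carries out.

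Two further inaccuracies in your sketch: (i) ``$\div(s)$ contains a vertical fiber $\mathcal{X}_p$ iff $s\in pH^0$'' is not the right condition; the theorem demands that $\div(s)$ contain \emph{no} irreducible component of \emph{any} fiber, which is a strictly stronger requirement and, over the infinitely many primes $p$, requires the paper's three-range argument (small $p$, an intermediate range controlled via $q^{Cn}$ decay, and large $p$ controlled via intersection with a fixed horizontal curve $\mathcal{C}$). (ii) Generic smoothness over $\mathbb{Q}$ cannot be obtained directly from Bertini over $\mathbb{Q}$ while simultaneously respecting the arithmetic norm constraint; the paper instead uses Poonen's Bertini over $\mathbb{F}_p$ together with Charles' comparison theorem (Theorem \ref{Charles Comparison}) relating densities in the arithmetic ball to densities mod $p$, and lets $p\to\infty$ so $\zeta_{\mathcal{X}_p}(\dim\mathcal{X})^{-1}\to 1$.
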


As an application, we prove
\begin{thm}[Theorem \ref{main}]
	Let $\mathcal{X}/\mathbb{Z}$ be an arithmetic variety of relative dimension $d$ and $\mathcal{\overline{N}}$ be a Hermitian line bundle. Then
	\begin{flalign*}
		\quad \quad \quad &
		\operatorname{ess}(\mathcal{\overline{N}}) \leq \inf \frac{\mathcal{ \overline{L}}^d \cdot \pi^*\mathcal{\overline{N}}}{\mathcal{L}_\mathbb{Q}^d},
		&&
	\end{flalign*}
	where the infimum is taken over all 
	\begin{itemize}
		\item $\pi: \mathcal{X}^\prime \longrightarrow \mathcal{X}$ is a birational morphism with $\mathcal{X}^\prime$ normal and generically smooth.
		\item $\mathcal{\overline{L}}$ is an ample Hermitian line bundle on $\mathcal{X}^\prime$.
	\end{itemize}
	\label{intro_main}
\end{thm}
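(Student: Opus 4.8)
The plan is to deduce Theorem \ref{intro_main} from the arithmetic Demailly approximation theorem (Theorem \ref{full arithmetic Demailly approximation}) by an induction on the relative dimension $d$, converting the good sections it produces into low-height closed points via the arithmetic projection formula and a pigeonhole argument. First I would reduce to the following ``core inequality'': if $\mathcal X/\mathbb Z$ is normal and generically smooth of relative dimension $d$, $\overline{\mathcal L}$ is ample and $\overline{\mathcal N}$ is any Hermitian line bundle, then $\operatorname{ess}(\overline{\mathcal N})\le \overline{\mathcal L}^{d}\cdot\overline{\mathcal N}/\mathcal L_\mathbb Q^{d}$. This suffices: given a birational $\pi:\mathcal X^\prime\to\mathcal X$ with $\mathcal X^\prime$ normal and generically smooth (such models exist by resolution of singularities) and an ample $\overline{\mathcal L}$ on $\mathcal X^\prime$, apply the core inequality to $(\mathcal X^\prime,\overline{\mathcal L},\pi^*\overline{\mathcal N})$; since $\pi_\mathbb Q$ is a surjective birational morphism and heights pull back one has $\operatorname{ess}(\overline{\mathcal N})\le\operatorname{ess}(\pi^*\overline{\mathcal N})$, and one then takes the infimum over all $(\pi,\overline{\mathcal L})$. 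Unwinding $\operatorname{ess}(\overline{\mathcal N})=\sup_{Y\subsetneq\mathcal X_\mathbb Q}\inf_{x\notin Y}h_{\overline{\mathcal N}}(x)$, the core inequality reduces to the claim $P(d)$: \emph{for every $\varepsilon>0$ and every proper closed $Y\subsetneq\mathcal X_\mathbb Q$ there is a closed point $x\in\mathcal X_\mathbb Q\setminus Y$ with $h_{\overline{\mathcal N}}(x)<\overline{\mathcal L}^{d}\cdot\overline{\mathcal N}/\mathcal L_\mathbb Q^{d}+\varepsilon$.} I would prove $P(d)$ by induction on $d$, the case $d=0$ being trivial since then $\mathcal X_\mathbb Q$ is a single point and both sides equal $\widehat{\deg}(\overline{\mathcal N})/[\kappa(\mathcal X_\mathbb Q):\mathbb Q]$.

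For $d\ge 1$, apply Theorem \ref{full arithmetic Demailly approximation} to $(\mathcal X,\overline{\mathcal L},Y)$ to obtain $s_m\in H^0(\mathcal X,m\mathcal L)$ with $\tfrac1m\log\|s_m\|\to 0$ in $L^1$, with $\operatorname{div}(s_m)$ having no vertical component and $\operatorname{div}(s_m)_\mathbb Q$ smooth. Write $\operatorname{div}(s_m)=\sum_i\mathcal D_i$ with the $\mathcal D_i$ horizontal and prime (the divisor is reduced because its generic fiber is smooth); each $\mathcal D_i$ is then flat over $\mathbb Z$, so $(\mathcal D_i)_\mathbb Q$ is a smooth integral $(d-1)$-dimensional subvariety, and since $\operatorname{div}(s_m)_\mathbb Q$ contains no irreducible component of $Y$ while $\dim Y\le d-1$, no $(\mathcal D_i)_\mathbb Q$ is contained in $Y$. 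The arithmetic projection formula gives $m\,\overline{\mathcal L}^{d}\cdot\overline{\mathcal N}=\sum_i (\overline{\mathcal L}|_{\mathcal D_i})^{d-1}\cdot(\overline{\mathcal N}|_{\mathcal D_i})-\int_{\mathcal X(\mathbb C)}\log\|s_m\|\,c_1(\overline{\mathcal L})^{d-1}\wedge c_1(\overline{\mathcal N})$, and the archimedean integral is $o(m)$ because $c_1(\overline{\mathcal L})^{d-1}\wedge c_1(\overline{\mathcal N})$ is a fixed smooth top-form and $\tfrac1m\log\|s_m\|\to 0$ in $L^1$; on generic fibers $\sum_i (\mathcal L_\mathbb Q)^{d-1}\cdot(\mathcal D_i)_\mathbb Q=m\,\mathcal L_\mathbb Q^{d}>0$. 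By pigeonhole (if $\sum_i p_i/\sum_i q_i=r$ with all $q_i>0$ then $p_i/q_i\le r$ for some $i$) there is an index $i$ with $(\overline{\mathcal L}|_{\mathcal D_i})^{d-1}\cdot(\overline{\mathcal N}|_{\mathcal D_i})\big/\big((\mathcal L_\mathbb Q)^{d-1}\cdot(\mathcal D_i)_\mathbb Q\big)\le \big(m\,\overline{\mathcal L}^{d}\cdot\overline{\mathcal N}+o(m)\big)/(m\,\mathcal L_\mathbb Q^{d})$, which is $<\overline{\mathcal L}^{d}\cdot\overline{\mathcal N}/\mathcal L_\mathbb Q^{d}+\varepsilon/2$ once $m$ is large enough.

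Fix such an $m$ and set $\mathcal D:=\mathcal D_i$; replacing $\mathcal D$ by its normalization $\nu:\widetilde{\mathcal D}\to\mathcal D$, which is an isomorphism over $\mathbb Q$ (as $\mathcal D_\mathbb Q$ is smooth, hence normal) and therefore leaves all intersection numbers and heights unchanged while making $\widetilde{\mathcal D}$ normal and generically smooth of relative dimension $d-1$, and noting $\overline{\mathcal L}|_{\widetilde{\mathcal D}}$ is still ample, I would apply $P(d-1)$ to $(\widetilde{\mathcal D},\overline{\mathcal L}|_{\widetilde{\mathcal D}},\overline{\mathcal N}|_{\widetilde{\mathcal D}})$ with constant $\varepsilon/2$ and proper closed subset $\nu_\mathbb Q^{-1}(Y\cap\mathcal D_\mathbb Q)$. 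This produces a closed point $x\in\mathcal D_\mathbb Q\subseteq\mathcal X_\mathbb Q$ with $x\notin Y$ and $h_{\overline{\mathcal N}}(x)=h_{\overline{\mathcal N}|_{\mathcal D}}(x)<(\overline{\mathcal L}|_{\mathcal D})^{d-1}\cdot(\overline{\mathcal N}|_{\mathcal D})\big/\big((\mathcal L_\mathbb Q)^{d-1}\cdot\mathcal D_\mathbb Q\big)+\varepsilon/2<\overline{\mathcal L}^{d}\cdot\overline{\mathcal N}/\mathcal L_\mathbb Q^{d}+\varepsilon$, which witnesses $P(d)$ and closes the induction. Observe that feeding the current $Y$ into the Demailly theorem at every stage is exactly what guarantees the final point avoids $Y$, and hence what makes the supremum over $Y$ collapse to the claimed bound.

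I expect the genuine content to lie entirely in Theorem \ref{full arithmetic Demailly approximation}, which I am allowed to assume; granting it, the present argument is essentially functoriality plus bookkeeping, and the only delicate points are: (i) the possible reducibility of $\operatorname{div}(s_m)$, handled by the pigeonhole over its components and by recursing on one good component — this is precisely why the theorem asserts an inequality against an infimum rather than an equality; (ii) the chosen component need not be normal, handled by passing to the normalization, which is harmless since it is an isomorphism over $\mathbb Q$ and neither intersection numbers nor heights of points detect it; and (iii) the estimate of the archimedean error term, immediate from $L^1$-convergence. I would record as preliminary lemmas the birational monotonicity $\operatorname{ess}(\overline{\mathcal N})\le\operatorname{ess}(\pi^*\overline{\mathcal N})$ and the compatibility of heights with restriction to a closed subvariety and with normalization, all of which are standard.
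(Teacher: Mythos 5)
Your proposal is correct and follows the same overall strategy as the paper's proof: produce sections via Theorem~\ref{full arithmetic Demailly approximation}, intersect repeatedly while using the projection formula, control the archimedean contribution by the $L^1$-convergence $\tfrac1m\log\|s_m\|\to 0$, and extract low-height closed points avoiding $Y$. The differences are in the bookkeeping, and two of them are worth noting. The paper writes the recursion as a nested family of indices $n_1,\dots,n_d$, uses the remark that in relative dimension $\ge 2$ a smooth, non-vertical $\operatorname{div}(s)$ is automatically irreducible (a Lefschetz-type connectedness fact for ample divisors), and pigeonholes only at the final step where $\operatorname{div}(s_{n_1,\dots,n_d})$ decomposes into $\sum_i\overline{\{x_i\}}$; you instead formulate an induction $P(d)$ and pigeonhole over the horizontal components at every level, which avoids invoking irreducibility altogether. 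More substantively, the paper tacitly re-applies Theorem~\ref{full arithmetic Demailly approximation} to $\operatorname{div}(s_{n_1})$, but that theorem requires the model to be \emph{normal} as well as generically smooth, and there is no a priori reason for $\operatorname{div}(s_{n_1})$ to be normal at the finite places; your explicit passage to the normalization $\nu:\widetilde{\mathcal D}\to\mathcal D$, an isomorphism over $\mathbb Q$, repairs this while leaving heights, the essential minimum, and the relevant intersection numbers unchanged. The one small point you should record is that $\nu^*(\overline{\mathcal L}|_{\mathcal D})$ is still ample in the Zhang--Ikoma sense (positivity of $c_1$ and of degrees on vertical curves and of self-intersections on horizontal subvarieties all follow from the projection formula for the finite map $\nu$ together with ampleness of $\overline{\mathcal L}|_{\mathcal D}$). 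With that noted, the argument goes through and is, if anything, a slightly tighter version of what the paper does.
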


Theorem \ref{intro_main} plus the arithmetic BDPP theorem of Ikoma (\cite{ikoma_concavity_2015}, Theorem 6.4) gives
\begin{thm}[Corollary \ref{main2}]
Let $X/\mathbb{Q}$ be an integral projective variety and $\overline N$ be an adelic line bundle. Then $\operatorname{ess}(\overline N) \geq 0$ $\Longrightarrow $ $\overline N$ pseudo-effective.
\end{thm}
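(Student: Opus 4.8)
My plan is to deduce the statement from Theorem~\ref{intro_main} together with Ikoma's arithmetic BDPP theorem, after first reducing the adelic situation to the Hermitian one. The reduction goes as follows. Replacing $X$ by a resolution of singularities and $\overline N$ by its pullback — which changes neither $\operatorname{ess}(\overline N)$ nor the pseudo-effectivity class, both being birational invariants — I may assume $X$ smooth over $\mathbb Q$. Fix an arithmetically ample adelic line bundle $\overline A$ on $X$. In the framework of \cite{yuan_adelic_2022} the adelic metric of $\overline N$ is a uniform limit of model metrics, so for each $\varepsilon>0$ there are a normal, generically smooth projective arithmetic variety $\mathcal X_\varepsilon/\mathbb Z$ with generic fiber $X$ and a Hermitian $\mathbb Q$-line bundle $\overline{\mathcal M}_\varepsilon$ on it with $-\varepsilon\,\overline A\le\overline N-\overline{\mathcal M}_\varepsilon\le\varepsilon\,\overline A$ in the boundary topology. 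Put $\overline{\mathcal N}_\varepsilon=\overline{\mathcal M}_\varepsilon+2\varepsilon\,\overline A$. Using that the essential minimum is monotone, superadditive, homogeneous and positive on arithmetically ample classes (Zhang's inequality), the hypothesis $\operatorname{ess}(\overline N)\ge 0$ gives
\[
\operatorname{ess}(\overline{\mathcal N}_\varepsilon)\ \ge\ \operatorname{ess}(\overline N+\varepsilon\,\overline A)\ \ge\ \operatorname{ess}(\overline N)+\varepsilon\,\operatorname{ess}(\overline A)\ >\ 0.
\]
If I can show every such $\overline{\mathcal N}_\varepsilon$ is pseudo-effective, then $\overline N+3\varepsilon\,\overline A=\overline{\mathcal N}_\varepsilon+(\overline N-\overline{\mathcal M}_\varepsilon+\varepsilon\,\overline A)$ is a sum of two pseudo-effective classes, hence pseudo-effective, and letting $\varepsilon\to 0$ places $\overline N$ in the closed pseudo-effective cone. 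Thus it suffices to prove: a Hermitian $\mathbb Q$-line bundle $\overline{\mathcal N}$ on a normal, generically smooth arithmetic variety $\mathcal X/\mathbb Z$ of relative dimension $d$ with $\operatorname{ess}(\overline{\mathcal N})>0$ is pseudo-effective; by homogeneity I may take $\overline{\mathcal N}$ integral.

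For this Hermitian case I would apply Theorem~\ref{intro_main}: for every birational $\pi\colon\mathcal X'\to\mathcal X$ with $\mathcal X'$ normal and generically smooth and every ample Hermitian line bundle $\overline{\mathcal L}$ on $\mathcal X'$,
\[
\frac{\overline{\mathcal L}^{\,d}\cdot\pi^{*}\overline{\mathcal N}}{\mathcal L_{\mathbb Q}^{\,d}}\ \ge\ \operatorname{ess}(\overline{\mathcal N})\ >\ 0,
\]
and since $\mathcal L_{\mathbb Q}^{\,d}=\deg_{\mathcal L_{\mathbb Q}}(\mathcal X_{\mathbb Q})>0$ this gives $\overline{\mathcal L}^{\,d}\cdot\pi^{*}\overline{\mathcal N}>0$ for all such pairs. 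By the projection formula $\overline{\mathcal N}$ pairs positively with every complete-intersection class $\pi_{*}(\overline{\mathcal L}^{\,d})$, and, after multilinear expansion and taking closures, nonnegatively with the entire arithmetic movable cone of $\mathcal X$. Ikoma's arithmetic BDPP theorem (\cite{ikoma_concavity_2015}, Theorem~6.4) identifies the arithmetic pseudo-effective cone as the dual of the arithmetic movable cone — the latter being the closure of the cone generated by push-forwards of complete-intersection classes over birational models — so $\overline{\mathcal N}$ is pseudo-effective. Combined with the reduction above, this yields $\operatorname{ess}(\overline N)\ge 0\Rightarrow\overline N$ pseudo-effective.

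The genuine difficulty has already been absorbed into Theorem~\ref{intro_main}, whose proof produces, for an ample $\overline{\mathcal L}$, sections $s_n\in H^0(\mathcal X,n\mathcal L)$ with $\tfrac1n\log\|s_n\|\to 0$ in $L^1$ and $\operatorname{div}(s_n)$ horizontal and generically smooth, via the arithmetic Demailly approximation and arithmetic Bertini theorems. What remains to be checked for the present deduction is comparatively soft, but it is exactly where I would be careful: that the generators of Ikoma's arithmetic movable cone are precisely (the closure of the cone spanned by) the complete-intersection classes $\overline{\mathcal L}^{\,d}$ that appear in the infimum of Theorem~\ref{intro_main} — in particular whether diagonal powers of a single ample class suffice or mixed products $\overline{\mathcal L}_1\cdots\overline{\mathcal L}_d$ are genuinely needed — and that the adelic-to-Hermitian approximation is compatible with the essential minimum (its continuity, monotonicity, superadditivity and Zhang's positivity) and with pseudo-effectivity (convexity and closedness of the cone, and stability under adding small positive classes).
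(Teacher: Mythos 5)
Your proposal is correct in substance and reaches the theorem by the same core input (Theorem~\ref{intro_main} combined with Ikoma's arithmetic BDPP), but the reduction from adelic to Hermitian is organized differently from the paper's. The paper argues by contraposition via Lemma~\ref{BallayReduction} (Ballay's Lemma~4.3): if $\overline N$ is not pseudo-effective, then a sufficiently close Hermitian model $\overline{\mathcal N}_n(\varepsilon)$ is still not pseudo-effective (non--pseudo-effectivity is an open condition) while its essential minimum strictly exceeds $\operatorname{ess}(\overline N)\ge 0$, contradicting Corollary~\ref{pseff111}. You instead sandwich $\overline N$ between $\overline{\mathcal M}_\varepsilon\pm\varepsilon\overline A$ for a fixed arithmetically ample $\overline A$, then propagate positivity of $\operatorname{ess}$ through monotonicity, superadditivity and Zhang's inequality $\operatorname{ess}(\overline A)>0$, show each $\overline{\mathcal N}_\varepsilon$ is pseudo-effective, and let $\varepsilon\to 0$ in the closed pseudo-effective cone. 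Both routes work; the paper's contrapositive needs less about $\operatorname{ess}$ (only that uniform metric approximation controls heights, plus openness of non--pseudo-effectivity), while yours needs more structure on $\operatorname{ess}$ but runs forward.

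Two points worth tightening in your version. First, for $\overline{\mathcal N}_\varepsilon=\overline{\mathcal M}_\varepsilon+2\varepsilon\overline A$ to be a genuine Hermitian $\mathbb Q$-line bundle on $\mathcal X_\varepsilon$ (so that Corollary~\ref{pseff111} applies), you should take $\overline A$ itself to be a Hermitian model $\overline{\mathcal A}$ on some $\mathcal X_0$ and refine the models so that each $\mathcal X_\varepsilon$ dominates $\mathcal X_0$, pulling $\overline{\mathcal A}$ back; as written, $\overline{\mathcal N}_\varepsilon$ is only adelic. Second, your final worry about whether diagonal powers $\overline{\mathcal L}^{\,d}$ of a single ample class suffice versus mixed products $\overline{\mathcal L}_1\cdots\overline{\mathcal L}_d$ is resolved by the statement of Ikoma's theorem the paper uses: that formulation tests pseudo-effectivity precisely against $\overline{\mathcal L}^{\,d}\cdot\pi^*\overline{\mathcal N}\ge 0$ for ample $\overline{\mathcal L}$ over all birational normal generically smooth models, which is exactly the infimum of Theorem~\ref{intro_main}, so no multilinear expansion or movable-cone bookkeeping is needed. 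Also note ``boundary topology'' is Yuan--Zhang terminology for the quasi-projective setting treated in the appendix; for this projective statement the relevant convergence is uniform convergence of adelic metrics.
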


In appendix we prove that this is also true for adelic line bundles on quasi-projective varieties.
\begin{thm}[Theorem \ref{mainthm_quasi-projective}]
	Let $X/\mathbb{Q}$ be an integral quasi-projective variety and $\overline N$ be an adelic line bundle. Then $\operatorname{ess}(\overline N) \geq 0$ $\Longrightarrow $ $\overline N$ pseudo-effective.
\end{thm}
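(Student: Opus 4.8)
The plan is to reduce the quasi-projective case to the projective case that was just established (Theorem \ref{main2}), by compactifying. Recall that in the framework of \cite{yuan_adelic_2022}, an adelic line bundle $\overline{N}$ on a quasi-projective variety $X/\mathbb{Q}$ is by definition a limit (in the appropriate boundary topology) of model adelic line bundles on projective compactifications; more precisely, one fixes an open immersion $X \hookrightarrow \overline{X}$ with $\overline{X}$ projective, and $\overline{N}$ is represented by a Cauchy sequence of adelic line bundles $\overline{N}_i$ on $\overline{X}$ together with the data of a ``boundary divisor''. The height function $h_{\overline{N}}$ on $X(\overline{\mathbb{Q}})$ and the essential minimum $\operatorname{ess}(\overline{N})$ are defined by restricting to $X$, and pseudo-effectivity is defined via the limit. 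So the strategy is: given $\operatorname{ess}(\overline{N}) \geq 0$, produce from it a statement about the essential minima of the approximating bundles $\overline{N}_i$ on the projective variety $\overline{X}$, apply Theorem \ref{main2} to each (or to a perturbation of each), and pass to the limit to conclude $\overline{N}$ is pseudo-effective.

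First I would set up the compactification: choose a projective model $\overline{X}$ of $X$ and a projective arithmetic variety $\mathcal{X}$ over $\mathbb{Z}$ with generic fiber $\overline{X}$, and write $\overline{N}$ as the limit of a sequence $\overline{N}_i$ of adelic line bundles on $\overline{X}$ with $\overline{N}_i \to \overline{N}$ in the boundary topology, where each $\overline{N}_i$ differs from the ``nominal'' extension of $N$ by a small multiple of an effective boundary divisor $\overline{D}$ supported on $\overline{X} \setminus X$. Next, I would relate $\operatorname{ess}(\overline{N})$ to the essential minima on $\overline{X}$: since the height functions converge uniformly on $X(\overline{\mathbb{Q}})$ (this is the content of Cauchy convergence in the boundary topology, giving a uniform bound $|h_{\overline{N}_i} - h_{\overline{N}_j}| \leq \epsilon_{ij}$ on $X$), the hypothesis $\operatorname{ess}(\overline{N}) \geq 0$ gives $\operatorname{ess}_X(\overline{N}_i) \geq -\delta_i$ with $\delta_i \to 0$, where $\operatorname{ess}_X$ denotes the essential minimum computed over generic nets of points in $X$ rather than in $\overline{X}$. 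The subtlety here is that the essential minimum over $X$ may differ from the essential minimum over $\overline{X}$ — but since $\overline{X}\setminus X$ is a proper closed subset, any generic net in $\overline{X}$ has a cofinal subnet in $X$ up to discarding points on the boundary, so in fact $\operatorname{ess}_{\overline{X}}(\overline{N}_i) = \operatorname{ess}_X(\overline{N}_i)$; alternatively one absorbs the discrepancy into the boundary divisor. Then I would apply Theorem \ref{main2} to $\overline{N}_i + \delta_i' \overline{A}$ for a suitable fixed ample $\overline{A}$ on $\overline{X}$ and $\delta_i' \to 0$ chosen so that $\operatorname{ess}(\overline{N}_i + \delta_i'\overline{A}) \geq 0$ genuinely on $\overline{X}$, concluding that $\overline{N}_i + \delta_i'\overline{A}$ is pseudo-effective on $\overline{X}$.

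Finally, I would pass to the limit: pseudo-effectivity on the projective variety $\overline{X}$ means $\overline{N}_i + \delta_i'\overline{A}$ lies in the closed cone generated by effective adelic line bundles, and restricting to $X$ preserves this; since $\overline{N}_i + \delta_i'\overline{A} \to \overline{N}$ in the boundary topology on $X$ (both $\delta_i' \overline{A}|_X \to 0$ and $\overline{N}_i|_X \to \overline{N}$), and the pseudo-effective cone is closed in this topology by definition in \cite{yuan_adelic_2022}, we conclude $\overline{N}$ is pseudo-effective. The main obstacle I anticipate is the bookkeeping around the two notions of essential minimum (over $X$ versus over a compactification) and making sure the boundary-divisor perturbations are controlled uniformly: one must check that adding a small boundary term does not destroy the height lower bound on $X(\overline{\mathbb{Q}})$ — but since the boundary divisor is effective and supported off $X$, its associated height is bounded below on $X(\overline{\mathbb{Q}})$, so this works in the right direction. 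A secondary technical point is verifying that Theorem \ref{main2}'s hypotheses (an honest adelic line bundle on a projective variety) are met by the approximants $\overline{N}_i$, which holds since each $\overline{N}_i$ is by construction a genuine adelic line bundle on the projective $\overline{X}$; no relative semipositivity or bigness is needed precisely because Theorem \ref{main2} has already removed those assumptions.
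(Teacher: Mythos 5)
Your proposal is correct in outline but takes the direct implication where the paper argues by contrapositive, and the comparison is instructive. The paper assumes $\overline N$ is \emph{not} pseudo-effective, uses that non-pseudo-effectivity is an open condition to produce a uniform $\varepsilon>0$ with $\overline N(\varepsilon)$ still not pseudo-effective, then applies the contrapositive of Lemma~\ref{4} to get a \emph{single} Hermitian model $\overline{\mathcal N}_i(\varepsilon)$ on a projective $\mathcal X_i$ that is not pseudo-effective, and feeds that one model into Corollary~\ref{pseff111}. Because everything is anchored to the fixed $\varepsilon$, there is no bookkeeping of vanishing sequences. Your direct route instead has to thread two small quantities --- the $\delta_i$ coming from height convergence and the $\delta_i'$ used to twist by an ample $\overline A$ --- and make them decay compatibly; this is workable (you should normalize $\overline A$ so that $h_{\overline A}\geq 1$ everywhere, so that $\operatorname{ess}(\overline N_i + \delta_i'\overline A)\geq \operatorname{ess}(\overline N_i)+\delta_i'$), but it is strictly more bookkeeping than the contrapositive.

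The one place you should not wave your hands is the final limit passage. You invoke closedness of the pseudo-effective cone in the boundary topology ``by definition'', but this is not a definition --- it is precisely the content of Lemma~\ref{4} in the paper, stated there for a representing sequence of \emph{Hermitian} line bundles on varying projective models $\mathcal X_i$, not for adelic line bundles on a fixed compactification $\overline X$ as in your setup. You also slip in the auxiliary claim that restricting a pseudo-effective adelic line bundle from $\overline X$ to $X$ preserves pseudo-effectivity; that is plausible but is not a formality, since pseudo-effectivity is tested against big perturbations and bigness does not interact trivially with passage to an open subvariety. Both points can be repaired by working directly with the Yuan--Zhang representation (Hermitian $\overline{\mathcal N}_i$ on projective $\mathcal X_i$, feeding into Corollary~\ref{pseff111} rather than Theorem~\ref{main2}), at which point your argument becomes the direct-implication twin of the paper's contrapositive. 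Your observation that $\operatorname{ess}_X(\overline N_i)=\operatorname{ess}_{\overline X}(\overline N_i)$ via the correspondence of proper closed subsets is correct and is exactly the identity the paper records as $\operatorname{ess}(h_{\overline{\mathcal N}},\mathcal X_{\mathbb Q})=\operatorname{ess}(h_{\overline{\mathcal N}},X)$. Finally, you and the paper both rely on uniform convergence of the heights $h_{\overline{\mathcal N}_i}$ to $h_{\overline N}$ on $X(\overline{\mathbb Q})$; this is a shared ingredient, not a new gap introduced by your route.
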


\subsection{Comparison with other arithmetic Bertini type theorems}
In \cite{moriwaki_arithmetic_1995}, Moriwaki proved
\begin{thm}[Moriwaki]
	Let $\mathcal{X}/\mathbb{Z}$ be an arithmetic variety that is generically smooth, and let $\overline{\mathcal{L}}$ be an ample Hermitian line bundle. Then for (not necessarily closed) points $x_1,\dots,x_q$ on $\mathcal{X}$, there exists a large $m$ and a section $s \in H^0(\mathcal{X},m\mathcal{L})$ s.t.
	\begin{enumerate}
		\item $\|s\|_\infty <1$.
		\item $\div(s)_\mathbb{Q}$ is smooth over $\mathbb{Q}$;
		\item $s(x_i) \neq 0$ for each $i$;
	\end{enumerate}
\end{thm}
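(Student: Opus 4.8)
One natural approach is to turn the assertion into a counting statement, playing the arithmetic Hilbert--Samuel theorem against a geometric description of the ``bad'' sections. First I would fix $m \gg 0$ so that $m\mathcal{L}_{\mathbb{Q}}$ is very ample on the smooth variety $\mathcal{X}_{\mathbb{Q}}$, generates $2$-jets, and separates the finitely many points $x_1,\dots,x_q$. Inside $V_m := H^0(\mathcal{X}_{\mathbb{Q}}, m\mathcal{L}_{\mathbb{Q}})$, let $B_m \subset \mathbb{P}(V_m)$ be the union of the locus of sections $t$ with $\div(t)_{\mathbb{Q}}$ singular at some point of $\mathcal{X}_{\mathbb{Q}}$ together with the $q$ hyperplanes $\{t : t(x_i) = 0\}$. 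By the classical Bertini theorem in characteristic zero, the first locus is the image in $\mathbb{P}(V_m)$ of the incidence variety $\{(y,t) : t \in \mathfrak{m}_y^2\cdot m\mathcal{L}_{\mathbb{Q}}\} \subset \mathcal{X}_{\mathbb{Q}} \times \mathbb{P}(V_m)$, whose fibre over each point of $\mathcal{X}_{\mathbb{Q}}$ is linear of codimension $d+1$; hence $B_m$ is a proper closed subvariety of $\mathbb{P}(V_m)$ of degree bounded by a polynomial in $m$ (governed by $\deg_{m\mathcal{L}_{\mathbb{Q}}}\mathcal{X}_{\mathbb{Q}} = m^{d}(\mathcal{L}_{\mathbb{Q}}^{d})$ and $\dim V_m$).

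On the arithmetic side, since $\overline{\mathcal{L}}$ is ample the arithmetic Hilbert--Samuel theorem gives
\[
\log \#\bigl\{\, s \in H^0(\mathcal{X}, m\mathcal{L}) : \|s\|_\infty \le 1 \,\bigr\} = \frac{\hat{c}_1(\overline{\mathcal{L}})^{d+1}}{(d+1)!}\, m^{d+1} + o(m^{d+1}),
\]
with strictly positive leading coefficient, and the same holds for the strictly small sections after enlarging the metric of $\overline{\mathcal{L}}$ slightly. These sections form the intersection of the lattice $H^0(\mathcal{X}, m\mathcal{L})$, of rank $\dim V_m \sim \tfrac{(\mathcal{L}_{\mathbb{Q}}^{d})}{d!}\, m^{d}$, with a convex symmetric body; passing between $\|\cdot\|_\infty$ and the $L^2$-norm (via Gromov's inequality) changes the counts only by a factor $\exp(o(m^{d+1}))$. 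I would then show, by a geometry-of-numbers estimate, that the number of small lattice sections lying on the subvariety $B_m$ of polynomial degree is smaller than the total count by a factor $\exp(\Omega(m))$ --- intuitively, $B_m$ has ``codimension $\ge 1$'' while the small sections fill a lattice body whose linear extent in every direction is $\exp(\Omega(m))$. Granting this, for $m$ large there exists $s \in H^0(\mathcal{X}, m\mathcal{L})$ with $\|s\|_\infty < 1$ and $s \notin B_m$, and then $\div(s)_{\mathbb{Q}}$ is smooth over $\mathbb{Q}$ and $s(x_i) \neq 0$ for every $i$.

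The hard part is precisely this last counting step. To bound the small sections meeting $B_m$ one needs not merely that $B_m$ is a proper subvariety of controlled degree, but that the convex body of small sections is well-rounded with respect to the lattice $H^0(\mathcal{X}, m\mathcal{L})$ --- i.e. that its successive minima do not degenerate as $m \to \infty$ --- since otherwise all small sections could conceivably concentrate on $B_m$. Establishing this uniform control (for instance through the behaviour of $\hat{h}^0$ under small perturbations of $\overline{\mathcal{L}}$, or via $L^2$-to-sup estimates for the bundles restricted to hyperplane sections) is the technical core. An alternative, closer in spirit to the original argument, is to induct on $\dim\mathcal{X}$: cut $\mathcal{X}$ by one carefully chosen hyperplane section of a power of $\mathcal{L}$ at a time, tracking the behaviour of norms under restriction, until one lands on an arithmetic curve --- where the assertion reduces to Minkowski's theorem for the lattice of sections; there the difficulty migrates into showing that a generic small section restricts to a still-small section whose divisor stays smooth.
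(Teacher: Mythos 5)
The paper does not prove this statement: it is quoted from \cite{moriwaki_arithmetic_1995} as background in the comparison subsection (alongside the theorems of Ikoma and Charles), so there is no internal proof to measure your attempt against. I will therefore assess the proposal on its own terms.

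Your sketch identifies the right pressure points but does not resolve them, and you say so yourself. The decomposition of the bad locus $B_m \subset \mathbb{P}(V_m)$ into the singular-divisor locus together with the hyperplanes $\{t(x_i)=0\}$, the incidence-variety computation showing $B_m$ is a proper closed subvariety of degree bounded polynomially in $m$, and the invocation of arithmetic Hilbert--Samuel to control $\log\#\{s : \|s\|_\infty \le 1\}$ are all correct and are genuinely the right ingredients. But the decisive inequality --- that the number of small lattice sections on the affine cone over $B_m$ is exponentially smaller than the total count --- is exactly where the theorem lives, and degree control on $B_m$ alone cannot yield it: a single hyperplane of height comparable to the lattice scale can absorb all but a vanishing fraction of the small sections, and nothing in the degree bound rules this out. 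What is needed is a well-roundedness or non-concentration statement for the convex body of small sections relative to the lattice $H^0(\mathcal{X}, m\mathcal{L})$, which is a genuinely arithmetic input: roughly, one must show that small sections cannot all lie in a proper linear subspace, which Moriwaki extracts from arithmetic ampleness by comparing $\widehat{h}^0$ of $m\overline{\mathcal{L}}$ with the size of the image of small sections under restriction to a suitable subvariety, passing between sup and $L^2$ norms via Gromov. Your closing paragraph --- inducting on $\dim\mathcal{X}$ by restricting to hyperplane sections until one reaches an arithmetic curve, where Minkowski finishes --- is in fact much closer to the structure of Moriwaki's actual argument than the one-shot counting you lead with, and is the more promising route; but as written both variants leave the key norm estimate asserted rather than proved, so the proposal is an accurate plan of attack rather than a proof.
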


Moriwaki's theorem was later generalized by Ikoma \cite{ikoma_bertini-type_2015} to general arithmetic linear series (rather than the full $\bigoplus_n H^0(\mathcal{X},n\mathcal{L})$).

A vast generalization is made in \cite{charles_arithmetic_2021}, where Charles proved
\begin{thm}[Charles]
	Let $\mathcal{X}/\mathbb{Z}$ be an arithmetic variety that is generically smooth, and let $\overline{\mathcal{L}}$ be an ample Hermitian line bundle. Let $Y \subseteq \mathcal{X}_\mathbb{Q}$ be a closed subset. Then the set
	\begin{flalign*}
		\quad \quad \quad 
		E_n = \big\{ &s \in H^0_\mathrm{Ar}(\mathcal{X},n\overline{\mathcal{L}}): \div(s) \text{ is irreducible}, \\
		 &\div(s)_\mathbb{Q} \text{ is smooth and does not contain any irreducible component of $Y$} \big\}
		&&
	\end{flalign*}
	has density tends to $1$ (i.e. $\#E_n / \#H^0_\mathrm{Ar}(\mathcal{X},n\overline{\mathcal{L}}) \longrightarrow 1$). Here under Charles' notation,
	\begin{flalign*}
		\quad \quad \quad &
		H^0_\mathrm{Ar}(\mathcal{X},\overline{\mathcal{L}}) = \big\{ s \in H^0(\mathcal{X},\mathcal{L}): \|s\|_\infty \leq 1 \big\}.
		&&
	\end{flalign*}
\end{thm}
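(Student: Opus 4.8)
The plan is to split the statement into two essentially independent parts: one over the finite primes, which governs the irreducibility of $\operatorname{div}(s)$ over $\mathbb{Z}$ through its vertical components, and one over the generic fibre, which packages the smoothness and the $Y$-avoidance; the second part I would then reduce to a single geometry-of-numbers count, namely that the lattice points of the Arakelov unit ball avoid any fixed proper subvariety (or thin subset) of the linear system with density tending to $1$. Set $h(n)=\operatorname{rk}H^0(\mathcal{X},n\mathcal{L})$, let $B_n\subseteq H^0(\mathcal{X},n\mathcal{L})\otimes\mathbb{R}$ be the unit ball for $\|\cdot\|_\infty$, and $N_n=\#H^0_{\mathrm{Ar}}(\mathcal{X},n\overline{\mathcal{L}})=\#\big(H^0(\mathcal{X},n\mathcal{L})\cap B_n\big)$. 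The arithmetic Hilbert--Samuel theorem together with ampleness of $\overline{\mathcal{L}}$ gives $\log N_n=\widehat{\operatorname{vol}}(\overline{\mathcal{L}})\,n^{d+1}/(d+1)!+o(n^{d+1})$ with $\widehat{\operatorname{vol}}(\overline{\mathcal{L}})>0$ (here $\dim\mathcal{X}=d+1$), and, in its refined form, quantitative control on the successive minima $\mu_1(n)\le\cdots\le\mu_{h(n)}(n)$ of $(H^0(\mathcal{X},n\mathcal{L}),\|\cdot\|_\infty)$, i.e.\ on the shape of $B_n$; this positivity is the only input about $\overline{\mathcal{L}}$ I would use.

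First I would dispose of vertical components. If $\operatorname{div}(s)$ has a component inside a fibre $\mathcal{X}_{\mathbb{F}_p}$, then either $p$ is one of the finitely many primes of bad reduction, or $\mathcal{X}_{\mathbb{F}_p}$ is irreducible and $p\mid s$ in $H^0(\mathcal{X},n\mathcal{L})$; since $n\mathcal{L}$ is globally generated for $n\gg 0$, in either case $s$ lies in a union of proper reductions whose codimensions grow with $n$, so summing over $p$ and using dominated convergence the fraction of $s\in H^0_{\mathrm{Ar}}$ admitting a vertical component tends to $0$. Outside this set $\operatorname{div}(s)=\overline{\operatorname{div}(s)_{\mathbb{Q}}}$, hence $\operatorname{div}(s)$ is irreducible over $\mathbb{Z}$ if and only if $\operatorname{div}(s)_{\mathbb{Q}}$ is irreducible over $\mathbb{Q}$. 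In particular there is no need for a Poonen-type closed-point sieve over all finite primes: the theorem only asks for smoothness of the generic fibre, and summing over closed points of $\mathcal{X}_{\mathbb{Q}}$ would not converge anyway.

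Next I would reduce the generic-fibre conditions to avoiding a proper subvariety. For $n\gg 0$ the bundle $n\mathcal{L}_{\mathbb{Q}}$ is very ample, so inside the $\mathbb{Q}$-vector space $V_n=H^0(\mathcal{X}_{\mathbb{Q}},n\mathcal{L}_{\mathbb{Q}})$ the locus of $s$ with $\operatorname{div}(s)_{\mathbb{Q}}$ non-smooth is a proper Zariski-closed subset (the discriminant; classical Bertini over $\mathbb{Q}$ in characteristic $0$, using that $\mathcal{X}_{\mathbb{Q}}$ is smooth), of degree polynomial in $n$; the locus of $s$ vanishing along a fixed irreducible component of $Y$ is a proper linear subspace; and irreducibility of $\operatorname{div}(s)_{\mathbb{Q}}$ is, when $d\ge 2$, automatic for smooth members (an ample effective divisor on a connected projective variety of dimension $\ge 2$ is connected by Kodaira vanishing, hence integral once smooth), while when $d=1$ the non-irreducible $s$ form a thin subset of $V_n$ by the full symmetric monodromy of $|n\mathcal{L}_{\mathbb{Q}}|$ on the curve. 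Thus $H^0_{\mathrm{Ar}}(\mathcal{X},n\overline{\mathcal{L}})\setminus E_n$ is contained in the vertical-component set, together with the lattice points of $B_n$ lying on a proper subvariety $Z_n\subseteq\mathbb{P}(V_n)$ of controlled degree, together with (for $d=1$) the lattice points of $B_n$ lying in a thin set, and it remains to bound the latter two by $o(N_n)$.

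I expect the main obstacle to be exactly this last counting step. Over the standard lattice $\mathbb{Z}^{h}$ in a box it is routine — fix all but one coordinate, bound the number of roots of a one-variable polynomial of bounded degree, and invoke quantitative Hilbert irreducibility in the thin case — but $B_n$ is a highly anisotropic region of growing dimension $h(n)\to\infty$ attached to the lattice $H^0(\mathcal{X},n\mathcal{L})$, and the error terms must be uniform in $n$ (and in $\deg Z_n$, which also grows). Projecting along a successive-minimum direction $e_j$ reduces the bad count by a factor $\asymp\mu_j(n)^{-1}$, giving a bound $\ll(\deg Z_n)\,\mu_j(n)\,N_n$ that is $o(N_n)$ as soon as $e_j$ can be taken in the ``bulk'' where $\mu_j(n)$ is exponentially small; handling the general alignment requires a Fourier/Poisson-summation argument on $H^0(\mathcal{X},n\mathcal{L})$ whose error is governed by the asymptotic distribution of the $\mu_i(n)$, and it is precisely ampleness of $\overline{\mathcal{L}}$ that forces that distribution to be concentrated and bounded away from $0$. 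This is why the theorem is substantially deeper than its function-field analogue, where the corresponding count is a bare dimension count of $\mathbb{F}_q$-vector spaces.
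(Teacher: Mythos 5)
Your proposal diverges from Charles' proof (reproduced, slightly generalized, in Section~4 of the paper) precisely at the point you yourself flag as the ``main obstacle,'' and that obstacle is not overcome. You reduce the generic-fibre conditions to a purely characteristic-zero statement: identify the non-smooth and (for $d=1$) non-irreducible sections with a hypersurface or thin set $Z_n\subseteq V_n$ of degree polynomial in $n$, and then show that the lattice points of the anisotropic Arakelov ball $B_n$ avoid $Z_n$ with density tending to $1$. You correctly observe that this needs an equidistribution-against-a-hypersurface estimate uniform in $n$, in $\deg Z_n$, and in the shape of $B_n$, and you only gesture at a Fourier/Poisson argument. No such estimate is established here or, as far as I know, anywhere; this is a genuine gap, not a routine lemma. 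Your accompanying heuristic that ampleness keeps the successive minima ``concentrated and bounded away from $0$'' also points the wrong way: what Zhang's arithmetic Nakai--Moishezon (Theorem~\ref{Zhang Nakai-Moishezon}) gives, and what the counting actually uses, is that \emph{all} minima are $\le k^n$ for some $k<1$, i.e.\ exponentially small.

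Charles' proof avoids the equidistribution problem entirely, and this is the idea your proposal is missing. Smoothness of $\div(\sigma)_{\mathbb{Q}}$ is detected after reduction at a single good prime $p$: since the smooth locus is open, $\div(\sigma)_p$ smooth forces $\div(\sigma)$ generically smooth. One then needs only the density-transport result (Theorem~\ref{Charles Comparison}) from $H^0(\mathcal{X},n\mathcal{L})\cap B_n$ to $H^0(\mathcal{X}_p,n\mathcal{L}_p)$ together with Poonen's Bertini density over $\mathbb{F}_p$ (Theorem~\ref{Poonen Bertini Density}), giving density $\zeta_{\mathcal{X}_p}(\dim\mathcal{X})^{-1}$, and then $p\to\infty$. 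Your objection that a ``Poonen-type closed-point sieve over all finite primes'' would diverge is aimed at a strawman: the sieve is run on the single fibre $\mathcal{X}_p$, not over $\mathcal{X}_{\mathbb{Q}}$ or over all $p$ at once. Your treatment of vertical components is also too coarse: ``good $p$ implies $\mathcal{X}_{\mathbb{F}_p}$ irreducible, so a vertical component forces $p\mid s$'' fails whenever the Stein factorization goes through a nontrivial $\operatorname{Spec}O_K$ and $p$ is split, and ``dominated convergence over $p$'' does not engage with the fact that the primes needing control range up to $e^{nB}$ and thus grow with $n$ — which is exactly why the paper combines an Arakelov-degree bound along an auxiliary horizontal curve with the corank estimate of Theorem~\ref{Charles Counting}, and needs a separate argument in relative dimension $1$. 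Avoidance of the components of $Y$, which you fold into the discriminant step, is in the paper a direct application of Theorem~\ref{Charles Counting} with no equidistribution input.
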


One sees that all existing results are imposing at infinite places the condition ``$\|s\|_\infty \leq 1$''. This gives control of $\displaystyle \int_{\mathcal{X}(\mathbb{C})} \log\|s\| \; c_1(\overline{\mathcal{N}})$ only when the other line bundle is semipositive at infinity (i.e. $c_1(\overline{\mathcal{N}}) \geq 0$).

The novelty of our result is that, we at infinite places use a new condition ``$\tfrac1n \log\|s_n\| \longrightarrow 0$ in $L^1$-sense''. This controls arithmetic intersection number by making the $\displaystyle \int_{\mathcal{X}(\mathbb{C})} \log\|s\| \; c_1(\overline{\mathcal{N}})$ term vanish and works for arbitrary $\mathcal{\overline{N}}$. This is the reason why we could drop the relatively semipositive condition in Ballay's theorem (\cite{ballay_successive_2021}, Theorem 1.1).

\subsection{Acknowledgements}
We express great gratitude to Ye Tian for the introduction to Arakelov geometry. We are indebted to Xinyi Yuan who communicated his conjecture and ideas to us.

We thank Fusheng Deng, Zhuo Liu and Yaxiong Liu for instructions on the theory of psh functions and Demailly approximation. We thank Xiaozong Wang, Qingyang Han and Wei Xue for inspiring discussions. We thank Fran{\c c}ois Charles and Vincent Geudj for email correspondences.

\section{Conventions and terminology}
\begin{definition}
	An \emph{arithmetic variety} is a projective flat morphism $\pi: \mathcal{X} \longrightarrow \mathbb{Z}$ with $\mathcal{X}$ integral and a \emph{Hermitian line bundle} on $\mathcal{X}$ is the datum $\overline{\mathcal{L}} = (\mathcal{L},\|\cdot\|)$ where
	\begin{itemize}
		\item $\mathcal{L}$ is a line bundle on $\mathcal{X}$;
		\item $\|\cdot\|$ is a smooth Hermitian metric on the line bundle $\mathcal{L}(\mathbb{C})$ on $\mathcal{X}(\mathbb{C})$ that is smooth and invariant under complex conjugation.
	\end{itemize}
\end{definition}

\begin{definition}
	A Hermitian line bundle induces an Arakelov height function
	\begin{flalign*}
		\quad \quad \quad 
		h_{\overline{\mathcal{L}}}: X(\overline{\mathbb{Q}}) &\longrightarrow \mathbb{R} \\
		x &\longmapsto \frac{\widehat{\deg}\left( \overline{\mathcal{L}}|_{\overline{\{x\}}} \right)}{\deg(x)}.
		&&
	\end{flalign*}
\end{definition}

\begin{definition}
	A Hermitian line bundle $\overline{\mathcal{L}}$ on $\mathcal{X}/\mathbb{Z}$ is called \emph{ample} if
		\begin{itemize}
			\item $\mathcal{L}_\mathbb{Q}$ is ample on $\mathcal{X}_\mathbb{Q}$.
			\item $\overline{\mathcal{L}}$ is vertically positive, i.e.
				\begin{itemize}
					\item $c_1(\overline{\mathcal{L}})$ is a positive $(1,1)$-form,
					\item for any vertical curve $C$, $\deg {\mathcal{L}}|_C > 0$.
				\end{itemize}
			\item $\overline{\mathcal{L}}$ is horizontally positive, i.e. for any horizontal closed subvariety $\mathcal{Y}$, $\overline{\mathcal{L}}^{\dim \mathcal{Y}} \cdot \mathcal{Y} > 0$.
		\end{itemize}
\end{definition}

\begin{remark}
	This is equivalent to the definition used by Ikoma \cite{ikoma_concavity_2015} and is stronger than Zhang's original definition, which required only vertically semipositive. We want to be consistent with Ikoma, so that no confusion is made when we invoke his arithmetic BDPP theorem.
\end{remark}

\begin{definition}
Let $X/\mathbb{Q}$ be an integral projective variety. An \emph{adelic line bundle} is the datum $\overline{L} = \big( L, \big\{ \|\cdot\|_p \big\}_{p \leq \infty} \big)$ where
	\begin{itemize}
		\item $L$ is a line bundle on $X$;
		\item $\|\cdot\|_p$ is a continuous metric on $L_p^\text{an}$ on $X_p^\text{an}$,
	\end{itemize}
	and satisfying the \emph{coherence} condition that there exists an open set $U \subseteq \Spec(\mathbb{Z})$ and a model $(\mathcal{X}_U,\mathcal{L}_U)$ of $(X,L)$ over $U$ that induces $\|\cdot\|_p$ for all $p \in U$.
\end{definition}


\begin{definition}
	An adelic line bundle induces an adelic height function
\begin{flalign*}
	\quad \quad \quad 
	h_{\overline{L}}: X(\overline{\mathbb{Q}}) &\longrightarrow \mathbb{R} \\
	x &\longmapsto -\frac1{\deg(x)} \sum_{p \leq \infty} \sum_{x^\prime \in O(x)} \log\|s(x^\prime)\|_p
	&&
\end{flalign*}
where $s$ is any rational section of $L$ non-vanishing at $x$.
\end{definition}

\begin{thm}
	An adelic line bundle is a uniform limit of Hermitian line bundles. To be precise, let $X/\mathbb{Q}$ be an integral projective variety and $\overline{L}= \big(L,\big\{ \|\cdot\|_p \big\}_{p\leq \infty} \big)$ be an adelic line bundle, then there exists an open subset $U \subseteq \Spec(\mathbb{Z})$ and a sequence of Hermitian models $(\mathcal{X}_n,\overline{\mathcal{L}}_n)$ of $(X,L)$ (here $\overline{\mathcal{L}}_n$ are allowed to be $\mathbb{Q}$-Hermitian line bundles) such that
	\begin{itemize}
		\item for $p \in U$, $\|\cdot\|_{n,p} = \|\cdot\|_p$.
		\item for $p \notin U$ and finite, $\|\cdot\|_{n,p} / \|\cdot\|_p \longrightarrow 1$ uniformly. Here $\|\cdot\|_{n,p}$ is the metric induced by the model $(\mathcal{X}_n,{\mathcal{L}}_n)$ at $p$.
		\item for $p=\infty$, $\|\cdot\|_{n,\infty} / \|\cdot\|_{\infty} \longrightarrow 1$ uniformly.
	\end{itemize}
\end{thm}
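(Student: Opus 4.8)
The plan is to reduce the statement to two classical density results — one non-archimedean, one archimedean — together with a gluing procedure for models over $\Spec(\mathbb{Z})$.

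First I would use the coherence condition to fix an open $U_0 \subseteq \Spec(\mathbb{Z})$ and a projective flat model $(\mathcal{X}_{U_0},\mathcal{L}_{U_0})$ of $(X,L)$ inducing $\|\cdot\|_p$ for all $p \in U_0$; after shrinking I take $U = U_0$ to be the open set in the conclusion, and extend $(\mathcal{X}_U,\mathcal{L}_U)$ to a projective flat model $(\mathcal{X}^{(0)},\mathcal{L}^{(0)})$ over all of $\Spec(\mathbb{Z})$ by embedding into $\mathbb{P}^N_U \subseteq \mathbb{P}^N_{\mathbb{Z}}$, taking the closure of $X$, and extending $\mathcal{L}_U$ as a $\mathbb{Q}$-line bundle. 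The only places left to treat are the finitely many primes $p_1,\dots,p_r \notin U$ and the archimedean place. The key structural remark is that for a finite place $p$ the model metric induced by a model $(\mathcal{X}',\mathcal{L}')$ depends only on the local model $\mathcal{X}'\times_{\Spec\mathbb{Z}}\Spec\mathbb{Z}_{(p)}$ and is unchanged under pullback along a modification; this is what lets me treat the bad primes independently.

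At each $p_j$ I would invoke the density of $\mathbb{Q}$-model metrics in continuous metrics on the Berkovich analytification $X_{p_j}^{\mathrm{an}}$ (Gubler; Boucksom--Favre--Jonsson; or its Arakelov incarnation in Yuan--Zhang and Moriwaki): for each $n$ there is a projective flat $\mathbb{Q}$-model $(\mathcal{X}^{(j,n)},\mathcal{L}^{(j,n)})$ of $(X,L)$ over $\Spec(\mathbb{Z})$ whose induced metric $\|\cdot\|_{n,p_j}$ satisfies $e^{-1/n} \le \|\cdot\|_{n,p_j}/\|\cdot\|_{p_j} \le e^{1/n}$ (local models extend to global ones by spreading out and re-gluing, so one may take the $\mathbb{Q}$-models global). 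At the archimedean place I would fix a smooth conjugation-invariant reference metric $\|\cdot\|_0$ on $L(\mathbb{C})$, write $\|\cdot\|_\infty = \|\cdot\|_0\, e^{-\varphi}$ for a continuous conjugation-invariant function $\varphi$ on the compact analytic space $X(\mathbb{C})$, and mollify $\varphi$: embedding $X(\mathbb{C})$ in some $\mathbb{P}^N(\mathbb{C})$, extending $\varphi$ continuously, convolving with a smoothing kernel, and averaging over complex conjugation produces smooth conjugation-invariant $\varphi_n \to \varphi$ uniformly, hence smooth metrics $\|\cdot\|_{n,\infty} = \|\cdot\|_0\, e^{-\varphi_n}$ with $\|\cdot\|_{n,\infty}/\|\cdot\|_\infty \to 1$ uniformly.

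Finally I would assemble the $n$-th term: let $\mathcal{X}_n$ be the join of $\mathcal{X}^{(0)},\mathcal{X}^{(1,n)},\dots,\mathcal{X}^{(r,n)}$, that is, the closure of the diagonal copy of $X$ inside the fibre product over $\Spec(\mathbb{Z})$, a projective flat model dominating all of them (normalised if convenient); and let $\mathcal{L}_n$ be the $\mathbb{Q}$-line bundle on $\mathcal{X}_n$ obtained by twisting the pullback of $\mathcal{L}^{(0)}$ by the vertical corrections $\pi_j^*\mathcal{L}^{(j,n)} - \pi_0^*\mathcal{L}^{(0)}$, which are supported on the disjoint fibres over $p_1,\dots,p_r$ and therefore glue to a single $\mathbb{Q}$-line bundle with generic fibre $L$. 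By the pullback-invariance noted above, the metric of $\overline{\mathcal{L}}_n = (\mathcal{L}_n,\|\cdot\|_{n,\infty})$ at $p\in U$ is exactly $\|\cdot\|_p$, at $p_j$ it is $\|\cdot\|_{n,p_j}$, and at $\infty$ it is $\|\cdot\|_{n,\infty}$, which gives the three asserted convergences. The main obstacle is the bookkeeping in this last step — checking that model metrics are insensitive to modifications and that the vertical corrections at distinct primes glue to a genuine $\mathbb{Q}$-line bundle on the join — together with pinning down the precise form of the non-archimedean density statement needed here; the archimedean mollification and the rest are routine.
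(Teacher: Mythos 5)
The paper states this theorem as a background fact in Section 2 with no proof and no explicit reference, so there is no argument of the authors' to compare against; I will therefore only assess your proposal on its own terms.

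Your overall strategy is the correct one: fix the open $U$ and a model over it via coherence, extend to a projective flat model $(\mathcal{X}^{(0)},\mathcal{L}^{(0)})$ over all of $\Spec(\mathbb{Z})$, invoke density of $\mathbb{Q}$-model metrics (Gubler; Boucksom--Favre--Jonsson) at the finitely many bad finite places, mollify at the archimedean place, and glue. The archimedean mollification is fine (one should just observe that the embedding $X(\mathbb{C})\hookrightarrow\mathbb{P}^N(\mathbb{C})$ can be taken defined over $\mathbb{Q}$ so that averaging over the ambient conjugation gives a conjugation-invariant $\varphi_n$ on $X(\mathbb{C})$), and the observation that model metrics at a finite place only see the local model and are invariant under dominating morphisms is both correct and the key structural fact.

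The gap is in the gluing step. You assert that the vertical corrections $\pi_j^*\mathcal{L}^{(j,n)}-\pi_0^*\mathcal{L}^{(0)}$ on the join $\mathcal{X}_n$ are ``supported on the disjoint fibres over $p_1,\dots,p_r$.'' This is not automatic: two global $\mathbb{Q}$-models $(\mathcal{X}^{(j,n)},\mathcal{L}^{(j,n)})$ and $(\mathcal{X}^{(0)},\mathcal{L}^{(0)})$ of the same $(X,L)$ differ, after pulling back to the join, by a vertical $\mathbb{Q}$-divisor that can a priori have components over arbitrary primes, including primes in $U$ --- which would destroy the requirement $\|\cdot\|_{n,p}=\|\cdot\|_p$ for $p\in U$. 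The parenthetical ``local models extend to global ones by spreading out and re-gluing'' is doing all the work here, and the re-gluing of a scheme over $\Spec\mathbb{Z}_{(p_j)}$ with $\mathcal{X}^{(0)}|_{\Spec\mathbb{Z}[1/p_j]}$ is not Zariski gluing (the two do not form an open cover of $\Spec\mathbb{Z}$); it requires either fpqc descent with an effectivity argument, or a different construction. The clean fix, which sidesteps all of this, is to use the standard strengthening of the non-archimedean density theorem: the approximating $\mathbb{Q}$-model metrics at $p_j$ can be realised on models obtained from $\mathcal{X}^{(0)}\times_{\mathbb{Z}}\mathbb{Z}_{p_j}$ by blowing up a centre contained in the special fibre over $p_j$, with $\mathcal{L}^{(j,n)}$ equal to $\mathcal{L}^{(0)}$ twisted by a $\mathbb{Q}$-combination of exceptional divisors. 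Such a centre is the same thing as a closed subscheme of $\mathcal{X}^{(0)}$ supported in the fibre $\mathcal{X}^{(0)}_{p_j}$, so one blows up $\mathcal{X}^{(0)}$ along the disjoint union of these centres over $p_1,\dots,p_r$ to get $\mathcal{X}_n$, and the exceptional twists at distinct primes glue tautologically into a single $\mathbb{Q}$-line bundle $\mathcal{L}_n$ that agrees with $\pi_0^*\mathcal{L}^{(0)}$ away from $\{p_1,\dots,p_r\}$. With that replacement your argument closes up; alternatively, keep the join construction but explicitly truncate each correction $D_j=\pi_j^*\mathcal{L}^{(j,n)}-\pi_0^*\mathcal{L}^{(0)}$ to its components lying over $p_j$ before twisting, justifying that the truncation does not change the induced metric at $p_j$.
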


\begin{definition}
	Let $\overline{L}$ be an adelic line bundle, the set of \emph{small sections} is defined to be
	\begin{flalign*}
		\quad \quad \quad &
		H^0(X,\overline{L}) = \big\{ s \in H^0(X,L): \sup \|s\|_p \leq 1 \text{ for all $p \leq \infty$} \big\}
		&&
	\end{flalign*} and we write $h^0(X,L) = \log\#H^0(X,\overline{L})$.
	Say $\dim X=d$, the \emph{arithmetic volume} is defined to be
	\begin{flalign*}
		\quad \quad \quad &
		\operatorname{vol}(\overline{L}) = \lim_{n \rightarrow \infty} \frac{h^0(X,n\overline{L})}{n^{d+1}/(d+1)!}.
		&&
	\end{flalign*}
\end{definition}

\begin{definition}
	An adelic line bundle $\overline{L}$ is called \emph{big} if $\operatorname{vol}(\overline{L})>0$.
\end{definition}

\begin{definition}
	An adelic line bundle $\overline{L}$ is called \emph{pseudo-effective} if for any big adelic line bundle $\overline{B}$ and any $n>0$, $\overline{L} + \frac1n \overline{B}$ is still big.
\end{definition}

\begin{thm}[arithmetic BDPP theorem, \cite{ikoma_concavity_2015} Theorem 6.4]
	Let $\mathcal{X}/\mathbb{Z}$ be an arithmetic variety and $\overline{\mathcal{N}}$ be a Hermitian line bundle on $\mathcal{X}$. Then $\overline{\mathcal{N}}$ is pseudo-effective if and only if for any birational morphism $\pi: \mathcal{X}^\prime \longrightarrow \mathcal{X}$ such that $\mathcal{X}^\prime$ normal and generically smooth, and for any ample Hermitian line bundle $\overline{\mathcal{L}}$ on $X^\prime$, we have $\overline{\mathcal{L}}^d \cdot \pi^*\overline{\mathcal{N}} \geq 0$.
\end{thm}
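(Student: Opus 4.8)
This is Ikoma's theorem; its proof follows the scheme of the geometric BDPP theorem of Boucksom--Demailly--P\u{a}un--Peternell, transported to Arakelov geometry, with the arithmetic volume $\operatorname{vol}$, its differentiability on the big cone, arithmetic Fujita approximation and arithmetic positive intersection products $\langle\,\cdot\,\rangle$ playing the roles of their complex-analytic counterparts. The implication ``$\overline{\mathcal N}$ pseudo-effective $\Rightarrow$ all the intersection numbers are $\geq0$'' is the easy one. Fix $\pi\colon\mathcal X'\to\mathcal X$ birational with $\mathcal X'$ normal and generically smooth, and $\overline{\mathcal L}$ ample on $\mathcal X'$. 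Pulling a small section back along $\pi$ preserves its sup-norms, so $\operatorname{vol}$ is a birational invariant and $\pi^*\overline{\mathcal N}$ is again a limit of big classes, hence pseudo-effective; thus $\pi^*\overline{\mathcal N}+\tfrac1n\overline{\mathcal L}$ is big for every $n$. By the arithmetic Kodaira lemma a big Hermitian line bundle is, after scaling, ample-plus-effective; an intersection of $d+1$ ample classes is $>0$, and the intersection of $d$ ample classes with the arithmetic divisor of a small section $s$ is $\geq0$ (its archimedean term $-\int_{\mathcal X'(\mathbb C)}\log\|s\|\,c_1(\overline{\mathcal L})^d\geq0$ because $\|s\|_\infty\le1$ and $c_1(\overline{\mathcal L})^d\geq0$, the finite contributions being $\geq0$ by ampleness). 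Hence $\overline{\mathcal L}^d\cdot\pi^*\overline{\mathcal N}+\tfrac1n\overline{\mathcal L}^{d+1}>0$ for all $n$, and letting $n\to\infty$ gives $\overline{\mathcal L}^d\cdot\pi^*\overline{\mathcal N}\geq0$.

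For the converse we argue by contraposition. Suppose $\overline{\mathcal N}$ is not pseudo-effective, i.e.\ not in the closure of the big cone, a closed convex cone in the (finite-dimensional) arithmetic N\'eron--Severi space; by Hahn--Banach there is a class $\overline\gamma$ in the dual space of arithmetic $1$-cycles with $\overline\gamma\ge0$ on the pseudo-effective cone and $\overline\gamma\cdot\overline{\mathcal N}<0$. The content of the theorem is that $\overline\gamma$ may be chosen \emph{movable}, i.e.\ $\overline\gamma=\lim_i \pi_{i*}\langle\overline{\mathcal L}_i^{d}\rangle$ with $\overline{\mathcal L}_i$ ample on birational models $\pi_i\colon\mathcal X_i'\to\mathcal X$; granting this, $\lim_i\overline{\mathcal L}_i^{d}\cdot\pi_i^*\overline{\mathcal N}=\overline\gamma\cdot\overline{\mathcal N}<0$, so some term is $<0$, contradicting the hypothesis. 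To produce such a $\overline\gamma$, fix an ample $\overline{\mathcal A}$ and set $t_0=\inf\{t>0:\operatorname{vol}(\overline{\mathcal N}+t\overline{\mathcal A})>0\}$; then $t_0>0$ (otherwise $\overline{\mathcal N}$ would be a limit of big classes). By differentiability of $\operatorname{vol}$ on the big cone (Chen, Yuan), $(d+1)\langle(\overline{\mathcal N}+t\overline{\mathcal A})^{d}\rangle\cdot\overline{\mathcal A}=\tfrac{d}{dt}\operatorname{vol}(\overline{\mathcal N}+t\overline{\mathcal A})$, and the arithmetic orthogonality estimate --- the analogue of the BDPP orthogonality, obtained from arithmetic Fujita/Zariski approximation plus bounds on Arakelov intersection numbers --- shows $0\le\langle\alpha^d\rangle\cdot\alpha-\langle\alpha^{d+1}\rangle\to0$ along $\alpha=\overline{\mathcal N}+t\overline{\mathcal A}$, $t\downarrow t_0$, while $\langle\alpha^{d+1}\rangle$ is a positive multiple of $\operatorname{vol}(\alpha)\to0$. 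Passing to the limit, the movable class $\overline\gamma_0=\lim_{t\downarrow t_0}\langle(\overline{\mathcal N}+t\overline{\mathcal A})^d\rangle$ satisfies $\overline\gamma_0\cdot(\overline{\mathcal N}+t_0\overline{\mathcal A})=0$, hence $\overline\gamma_0\cdot\overline{\mathcal N}=-t_0\,\overline\gamma_0\cdot\overline{\mathcal A}<0$ provided $\overline\gamma_0\neq0$; this $\overline\gamma_0$ is the desired separating movable class.

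The structural steps --- Hahn--Banach separation, the limit argument, realizing movable classes by complete intersections of ample classes on models --- are routine. The real work is in two analytic inputs: the $C^1$-regularity of the arithmetic volume on the big cone, and, crucially, the arithmetic orthogonality estimate controlling $\langle\alpha^d\rangle\cdot(\alpha-\langle\alpha\rangle)$. Over $\mathbb C$ the latter is the technical heart of BDPP, established through Demailly's regularization of positive $(1,1)$-currents and a mass-concentration argument; arithmetically it must be run with arithmetic Fujita/Zariski decomposition (Moriwaki, Chen, Yuan) while keeping control of the archimedean metric directions, where Demailly's integration by parts turns into estimates on Monge--Amp\`ere type integrals $\int\log\|s\|\,c_1(\overline{\mathcal L})^d$. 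A secondary difficulty is verifying $\overline\gamma_0\neq0$ (or, when it degenerates, descending to a face of the pseudo-effective cone and inducting on the relative dimension), together with setting up a genuinely finite-dimensional arithmetic N\'eron--Severi space --- and a movable cone with compact base --- in which all these limits live.
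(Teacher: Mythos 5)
This theorem is not proved in the paper: it is cited verbatim from Ikoma's paper on concavity of arithmetic volumes (\cite{ikoma_concavity_2015}, Theorem 6.4) and used as a black box in the derivation of Corollary~\ref{pseff111}. So there is no ``paper's own proof'' to compare your attempt against.

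That said, your sketch is a faithful outline of how the arithmetic BDPP theorem is actually established. The easy direction (pseudo-effective $\Rightarrow$ nonnegative intersection with complete intersections of ample classes on birational models) is essentially complete as you write it, modulo the standard caveat that the ``ample plus effective'' decomposition of a big class requires passing to a rational multiple; this is routine. For the hard direction you correctly identify the two genuine analytic inputs --- $C^1$-regularity of the arithmetic volume on the big cone and the arithmetic orthogonality estimate controlling $\langle\alpha^d\rangle\cdot\alpha-\langle\alpha^{d+1}\rangle$ --- and you are upfront that you do not prove them. You also correctly flag the degenerate case $\overline\gamma_0=0$ and the need to set up a finite-dimensional arithmetic N\'eron--Severi space in which the separation argument lives. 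None of this constitutes a self-contained proof, but as an account of the structure of Ikoma's argument and of where the technical weight lies, it is accurate. Since the theorem is used here only as an external reference, a citation is the appropriate treatment, and your outline is a reasonable reader's guide to what sits behind that citation.

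One small point of precision: Ikoma's Theorem 6.4 is phrased as a duality statement between the pseudo-effective cone and the (closure of the) movable cone generated by pushforwards of complete intersections $\pi_*\langle\overline{\mathcal L}^d\rangle$. The form quoted in the paper is an unwinding of that duality. Your sketch matches the dual-cone formulation more directly than the quoted statement, but the two are equivalent given that the movable cone has the description you use.
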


\section{Arithmetic Demailly approximation I: infinite places}
\subsection{Demailly approximation theorem in complex geometry}
The following is a baby case of the Demailly approximation theorem (see \cite{demailly_analytic_2010}, Corollary 13.23 or \cite{guedj_degenerate_2017}, Theorem 8.9), but with extra sup-norm estimation. It is not written directly in the literature, so for the readers' convenience we shall give a short proof here. There is nothing original.

\begin{thm}
	Let $X$ be a projective complex manifold and $\overline{L}=(L,\|\cdot\|)$ be an ample line bundle with smooth positive Hermitian metric $\|\cdot\|$. Then there exists $s_{n_i} \in H^0(X,n_iL)$ s.t. $\|s_{n_i}\|_\infty=1$ and $\frac1{n_i} \log\|s_{n_i}\| \longrightarrow 0$ in $L^1$-sense.
	\label{complex Demailly}
\end{thm}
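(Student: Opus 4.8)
The plan is to realize the sections $s_{n_i}$ as renormalized ``random'' elements of $H^0(X,nL)$ and to control the $L^1$-size of $\log\|s_{n_i}\|$ through Bergman kernel asymptotics; in fact the construction works for every $n\gg0$, hence a fortiori along a subsequence, so the restriction to a subsequence $n_i$ is only for uniformity with the later statements. First I would fix data: since $\|\cdot\|$ is positively curved, $\omega:=c_1(\overline L)$ is a K\"ahler form, so fix the probability volume form $dV:=\omega^d/\int_X\omega^d$ on $X(\mathbb{C})$ ($d=\dim X$), and for each $n$ the Hermitian $L^2$-product $\langle s,t\rangle_n:=\int_X\langle s,t\rangle_{h^{\otimes n}}\,dV$ on $H^0(X,nL)$. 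Choose an $L^2$-orthonormal basis $s^{(n)}_0,\dots,s^{(n)}_{N_n}$, and let $B_n(x)=\sum_i|s^{(n)}_i(x)|^2_{h^{\otimes n}}$ be the associated Bergman function; since $L$ is ample, asymptotic Riemann--Roch gives $\log N_n=d\log n+O(1)$.

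Next I would invoke the only ``imported'' input, which is exactly the easy case of the cited Demailly/Bergman statement: by Tian's theorem (Catlin, Zelditch; see also \cite{demailly_analytic_2010}), strict positivity of the metric yields $B_n(x)=n^d\bigl(1+O(1/n)\bigr)$ uniformly on $X$, so there are $0<c_1\le c_2$ with $c_1n^d\le B_n\le c_2n^d$ for $n\gg0$ and in particular $\int_X\log B_n\,dV=d\log n+O(1)$. Now I pass from $B_n$ to a single section by averaging: for $a$ Haar-distributed on the unit sphere of $(\mathbb{C}^{N_n+1},|\cdot|)$ put $s_a=\sum_i a_is^{(n)}_i$, so that $\|s_a\|_n=1$ and, by Cauchy--Schwarz, $|s_a(x)|^2_{h^{\otimes n}}\le B_n(x)$ pointwise. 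For fixed $x$ with $B_n(x)>0$ (true for $n\gg0$, as $nL$ is globally generated), unitary invariance shows that $|s_a(x)|^2_{h^{\otimes n}}/B_n(x)$ is distributed as $|a_0|^2\sim\mathrm{Beta}(1,N_n)$, whence $\mathbb{E}\bigl[\log|s_a(x)|^2_{h^{\otimes n}}\bigr]=\log B_n(x)-H_{N_n}$ with $H_{N_n}=\sum_{k=1}^{N_n}k^{-1}=d\log n+O(1)$; applying Tonelli to the non-negative function $\log B_n-\log|s_a|^2_{h^{\otimes n}}$ and integrating over $X$,
\[
\mathbb{E}\Bigl[\int_X\log|s_a|^2_{h^{\otimes n}}\,dV\Bigr]=\int_X\log B_n\,dV-H_{N_n}=O(1).
\]
Hence I may fix $t_n\in H^0(X,nL)$ with $\|t_n\|_n=1$ and $\int_X\log|t_n|^2_{h^{\otimes n}}\,dV\ge -C$ for a constant $C$ independent of $n$.

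To finish, set $M_n:=\sup_X|t_n|_{h^{\otimes n}}$ and $s_n:=M_n^{-1}t_n\in H^0(X,nL)$, so that $\|s_n\|_\infty=1$. From $|t_n|^2_{h^{\otimes n}}\le B_n$ together with $\int_X|t_n|^2_{h^{\otimes n}}\,dV=1$ one gets $1\le M_n^2\le c_2n^d$, i.e. $0\le\log M_n\le\tfrac d2\log n+O(1)$. Since $\|s_n\|_\infty=1$ we have $\log\|s_n(x)\|\le0$ for all $x$, so
\[
\tfrac1n\bigl\|\log\|s_n\|\bigr\|_{L^1(dV)}=-\tfrac1{2n}\int_X\log|t_n|^2_{h^{\otimes n}}\,dV+\tfrac1n\log M_n\le\frac{C}{2n}+\frac{d\log n}{2n}+O\!\Bigl(\tfrac1n\Bigr)\longrightarrow0,
\]
which is the asserted $L^1$-convergence (replacing $dV$ by any other fixed smooth volume form only changes the left-hand side by a bounded factor, so the conclusion is independent of the measure).

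The one genuinely delicate point — and the reason the sup-norm refinement is not completely free — is the bookkeeping of the two normalizations: replacing the Bergman kernel $B_n$ by a single random section costs $H_{N_n}=O(\log n)$ in $\int_X\log\|\cdot\|$, and replacing the $L^2$-normalization $\|t_n\|_n=1$ by the sup-normalization $\|s_n\|_\infty=1$ costs a further $\log M_n=O(\log n)$; both are $o(n)$ precisely because the Bergman estimate pins $B_n$ (and hence $M_n$) to polynomial size in $n$, and this is the sole place where positivity of the metric is used. The rest is elementary. (The $L^1$-part alone is of course contained in \cite{demailly_analytic_2010}, Cor.\ 13.23, and \cite{guedj_degenerate_2017}, Thm.\ 8.9, but passing to a single section and to $\|\cdot\|_\infty=1$ still requires the averaging and renormalization above.)
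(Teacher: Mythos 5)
Your argument is correct, and it takes a genuinely different route from the paper's. The paper fixes $n$, sets $u_n=\max_i\|\sigma_i\|^2$ and $s_{n,\ell}=\sum_i\sigma_i^\ell$ so that $\tfrac1\ell\log\|s_{n,\ell}\|\to\log u_n$, controls $\|s_{n,\ell}\|_\infty$ via Gromov's inequality, and then extracts the subsequence by a diagonal argument in the two indices $(n,\ell)$; the output lives only in degrees of the form $n_i\ell_i$. You instead draw a Haar-random unit vector $a$, use unitary invariance and the $\mathrm{Beta}(1,N_n)$ marginal to compute $\mathbb{E}\bigl[\int_X\log|s_a|^2\,dV\bigr]=\int_X\log B_n\,dV-H_{N_n}=O(1)$, pick a good realization $t_n$, and renormalize by $M_n=\sup|t_n|$; the pointwise Cauchy--Schwarz bound $|s_a|^2\le B_n$ plays the role of Gromov's inequality in bounding $M_n$, and the matching $d\log n+O(1)$ growth of $\int\log B_n\,dV$ and of $H_{N_n}$ replaces the $\ell\to\infty$ limit and the diagonalization. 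What your construction buys: it is single-pass and produces a section in \emph{every} sufficiently large degree $n$, not just along a sparse subsequence, with the $\|\cdot\|_\infty=1$ normalization and the $L^1$-estimate coming out of the same chain of inequalities. What the paper's construction buys: it stays entirely within deterministic $L^\infty$ and Bergman-kernel estimates and is the standard ``semigroup'' presentation of Demailly approximation. One cosmetic point: with the probability normalization $dV=\omega^d/\int_X\omega^d$, Tian--Catlin--Zelditch gives $B_n\sim c\,n^d$ for a positive constant $c$ depending on the normalization of $\omega$ rather than $c=1$; this is harmless since you immediately pass to the two-sided bound $c_1n^d\le B_n\le c_2n^d$ and the extra $\log c$ is absorbed into the $O(1)$ on both sides of $\int_X\log B_n\,dV-H_{N_n}$.
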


%
%
%
%
%

\begin{proof}
	Let $\sigma_i$, $i=1,\dots,r_n$ be orthonormal basis of $H^0(X,nL)$. Consider the Bergman kernel $b_n(x)=\sum_{i=1}^{r_n} \|\sigma_i(x)\|^2$. By Bouche-Tian,
	\begin{flalign*}
		\quad \quad \quad &
		b_n(x) \sim r_n=\dim H^0(X,nL)
		&&
	\end{flalign*} uniformly on $X$ as $n \longrightarrow \infty$. In particular, $\frac1n \log b_n(x) \longrightarrow 0$. Let $u_n(x)= \max_{i=1}^{r_n} \|\sigma_i(x)\|^2$. Then $1 \leq b_n/u_n \leq r_n$ and hence $\frac1n \log u_n \longrightarrow 0$.
	
	Fix $n$, consider $s_{n,\ell} = \sum_{i=1}^{r_n} \sigma_i^\ell \in H^0(X,n\ell L)$. We have $\frac1{\ell} \log \|s_{n,\ell}\| \longrightarrow \log u_n $ as $\ell \longrightarrow \infty$.
	 
	By Gromov's inequality, there exists $C$ s.t. for any $n$ and any $s \in H^0(X,nL)$ we have
	\begin{flalign*}
		\quad \quad \quad &
		\|s\|_{L^2} < \|s\|_\infty \leq C \sqrt{r_n} \cdot \|s\|_{L^2}.
		&&
	\end{flalign*} So $\|s_{n,\ell}\|_\infty \leq \sum_{i=1}^{r_n} \|\sigma_i\|_\infty^\ell \leq r_n \max_i \|\sigma_i\|_\infty^\ell$ and thus $\|s_{n,\ell}\|_\infty^{\frac1\ell} \leq r_n^{\frac 1\ell} \max_{i=1}^{r_n} \|\sigma_i\|_\infty \leq C r_n^{\frac12 + \frac1\ell}$.
	 
	Take $n_i$ be s.t. $\Big\| \frac1{n_i} \log u_{n_i} \Big\|_{L^1} \leq \frac1{2i}$. For each $n_i$, take $\ell_i$ be s.t. $\Big \|\frac1{n_i\ell_i} \log\|s_{n_i,\ell_i}\| - \frac1{n_i} \log u_{n_i} \Big  \|_{L^1} \leq \frac1{2i}$. Then $s_{n_i,\ell_i} \in H^0(X,n_i\ell_i L)$ satisfies $\frac1{n_i\ell_i} \log\|s_{n_i,\ell_i}\| \longrightarrow 0$ and $\lim_i \|s_{n_i\ell_i}\|_\infty^{\frac{1}{n_i\ell_i}} \leq 1$.
	 
	Note that if $\|s_n\|_\infty^{\frac1n} \longrightarrow c < 1$ then $\frac1n \log \|s_n\| \leq \log c < 0$ can not tends to $0$. This forces $\lim_i \|s_{n_i\ell_i}\|_\infty^{\frac{1}{n_i\ell_i}} \geq 1$ and thus $\lim_i \|s_{n_i\ell_i}\|_\infty^{\frac{1}{n_i\ell_i}} = 1$.
	 
	Then consider $s^\prime_{n_i,\ell_i} = \frac {s_{n_i,\ell_i} } {\|s_{n_i,\ell_i}\|_\infty} \in H^0(X,n_i\ell_i L)$. We have $\|s^\prime_{n_i,\ell_i}\|_\infty=1$ and
	\begin{flalign*}
		\quad \quad \quad &
		\frac1{n_i\ell_i} \log\|s^\prime_{n_i,\ell_i}\| = \frac1{n_i\ell_i} \log\|s_{n_i,\ell_i}\| - \frac1{n_i\ell_i} \log\|s_{n_i,\ell_i}\|_\infty \longrightarrow 0.
		&&
	\end{flalign*}
\end{proof}

\subsection{Real sections}
Let $\mathcal{X}/\mathbb{Z}$ be an arithmetic variety, normal and generically smooth. Let $\overline{\mathcal{L}}$ be an ample Hermitian line bundle.

Since $\mathcal{X}$ is normal, the Stein factorization of $\mathcal{X} \longrightarrow \operatorname{Spec} \mathbb{Z}$ is of the form
\begin{flalign*}
	\quad \quad \quad &
	\mathcal{X} \longrightarrow \operatorname{Spec} O_K \longrightarrow \operatorname{Spec} \mathbb{Z}
	&&
\end{flalign*} where $K$ is a number field and $O_K$ is the ring of integers of $K$.

We know
\begin{flalign*}
	\quad \quad \quad &
	\mathcal{X}(\mathbb{C}) = \coprod_{\sigma: K \rightarrow \mathbb{C}} \mathcal{X}_\sigma(\mathbb{C}),
	&&
\end{flalign*} each $\mathcal{X}_\sigma(\mathbb{C})$ is a connected projective complex manifold and the complex conjugation $F$ acts on $\mathcal{X}(\mathbb{C})$ via the following rule:
\begin{itemize}
	\item if $\sigma$ is a real embedding, then $F$ acts on $X_\sigma(\mathbb{C})$ via $Fx_\sigma=\overline{x}_\sigma$.
	\item if $(\sigma,\overline{\sigma} )$ is a pair of complex embeddings, then $F$ acts on $X_\sigma(\mathbb{C}) \coprod X_{\overline{\sigma}}(\mathbb{C})$ via $F(x,y) = (\overline{y},\overline{x})$.
\end{itemize}

We know
\begin{flalign*}
	\quad \quad \quad &
	H^0(\mathcal{X},n\mathcal{L})_\mathbb{C} = H^0(\mathcal{X}(\mathbb{C}),n\mathcal{L}(\mathbb{C})) = \bigoplus_{\sigma: K \rightarrow \mathbb{C}} H^0(\mathcal{X}_\sigma(\mathbb{C}),n\mathcal{L}_\sigma(\mathbb{C}))
	&&
\end{flalign*} and the complex conjugation $F$ acts on $H^0(\mathcal{X}(\mathbb{C}),n\mathcal{L}(\mathbb{C}))$ via the following rule:
\begin{itemize}
	\item if $\sigma$ is a real embedding, then $F$ acts on $H^0(\mathcal{X}_\sigma(\mathbb{C}),n\mathcal{L}_\sigma(\mathbb{C}))$ via $Fs_\sigma=\overline{s}_\sigma$.
	\item if $(\sigma,\overline{\sigma} )$ is a pair of complex embeddings, \\ then $F$ acts on $H^0(\mathcal{X}_\sigma(\mathbb{C}),n\mathcal{L}_\sigma(\mathbb{C})) \oplus H^0(\mathcal{X}_{\overline{\sigma}}(\mathbb{C}),n\mathcal{L}_{\overline{\sigma}}(\mathbb{C}))$ via $F(s_1,s_2) = (\overline{s}_2,\overline{s}_1)$.
\end{itemize}

Real sections $H^0(\mathcal{X},n\mathcal{L})_\mathbb{R}$ are elements of $H^0(\mathcal{X},n\mathcal{L})_\mathbb{C}$ fixed by complex conjugation. In \\ $\bigoplus_{\sigma: K \rightarrow \mathbb{C}} H^0(\mathcal{X}_\sigma(\mathbb{C}),n\mathcal{L}_\sigma(\mathbb{C}))$, they are elements of the following form:
\begin{itemize}
	\item if $\sigma$ is a real embedding, $\overline{s}_\sigma = s_\sigma$ is a real section.
	\item if $(\sigma,\overline{\sigma} )$ is a pair of complex embeddings, sections over $\sigma$ and $\overline{\sigma}$ are $(s_\sigma, \overline{s}_{\overline{\sigma}})$, i.e. a conjugate pair.
\end{itemize}

$H^0(\mathcal{X},n\mathcal{L})_\mathbb{R}$ inherits sup-norm from $H^0(\mathcal{X},n\mathcal{L})_\mathbb{C}$ and makes itself a normed $\mathbb{R}$-vector space. $H^0(\mathcal{X},n\mathcal{L}) \subseteq H^0(\mathcal{X},n\mathcal{L})_\mathbb{R}$ is a lattice.

\begin{remark}
	We invite readers to take $\mathcal{X}=\Spec O_K$. In this case, a detailed explaination can be found in \cite{neukirch_algebraic_2011}, Chapter I, \textsection 5.

\end{remark}

The goal of this section is to prove Demailly approximation by real sections.

\begin{prop}
	There exist $s_{n_k} \in H^0(\mathcal{X},n_k\mathcal{L})_\mathbb{R}$ such that $\|s_{n_k}\|_\infty=1$ and $\frac1{n_k} \log\|s_{n_k}\| \longrightarrow \nolinebreak 0$ in $L^1$-topology.
	\label{real sections}
\end{prop}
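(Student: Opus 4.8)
The plan is to deduce Proposition \ref{real sections} from the complex case, Theorem \ref{complex Demailly}, applied componentwise on $\mathcal{X}(\mathbb{C}) = \coprod_{\sigma} \mathcal{X}_\sigma(\mathbb{C})$, and then to descend from complex sections to real sections. First I would apply Theorem \ref{complex Demailly} to \emph{each} connected projective complex manifold $\mathcal{X}_\sigma(\mathbb{C})$ equipped with the restriction of $\overline{\mathcal{L}}$ (which is ample with positive smooth metric on each component), obtaining for every $\sigma$ a sequence $s_{n,\sigma} \in H^0(\mathcal{X}_\sigma(\mathbb{C}), n\mathcal{L}_\sigma(\mathbb{C}))$ with $\|s_{n,\sigma}\|_\infty = 1$ and $\tfrac1n \log\|s_{n,\sigma}\| \to 0$ in $L^1$. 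A mild bookkeeping point: Theorem \ref{complex Demailly} produces a subsequence $n_i$ depending a priori on $\sigma$, so I would first arrange, by passing to a common subsequence (there are only finitely many embeddings $\sigma$), that the same sequence of degrees $n_k$ works simultaneously for all $\sigma$; moreover I can insist $n_k$ is divisible by whatever is needed so that all the $s_{n_k,\sigma}$ live in $H^0(\mathcal{X}(\mathbb{C}), n_k\mathcal{L}(\mathbb{C}))$ after taking the direct sum over $\sigma$.

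Next I would symmetrize to land in the real subspace. Following the explicit description of the $F$-action recalled just above the proposition: for a real embedding $\sigma$, replace $s_{n_k,\sigma}$ by nothing if it is already fixed by conjugation, but in general it is not, so instead I would take the section whose $\sigma$-component is $s_{n_k,\sigma}$ for one choice in each conjugate pair $(\sigma,\overline\sigma)$ and $\overline{s_{n_k,\sigma}}$ on the conjugate component, and for a real embedding use $\tfrac12(s_{n_k,\sigma} + \overline{s_{n_k,\sigma}})$ — or, to avoid the cancellation problem with the average, better: for a real embedding simply note $\|\cdot\|$ and the $L^1$-convergence are conjugation-invariant, so one of $\mathrm{Re}(s_{n_k,\sigma})$ or $\mathrm{Im}(s_{n_k,\sigma})$ (multiplied by $i$) has sup-norm $\geq 1/2$ and still satisfies $\tfrac1{n_k}\log\|\cdot\| \to 0$; rescale it to have sup-norm exactly $1$. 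This produces $t_{n_k} \in H^0(\mathcal{X},n_k\mathcal{L})_\mathbb{R}$ with $\|t_{n_k}\|_\infty = 1$. The only subtlety is checking the $L^1$-estimate survives the rescaling: the rescaling constant $c_{n_k}$ lies in $[1, 2]$ (or more generally is bounded by a fixed constant independent of $k$), so $\tfrac1{n_k}\log c_{n_k} \to 0$ and hence $\tfrac1{n_k}\log\|t_{n_k}\| = \tfrac1{n_k}\log\|s_{n_k,\sigma}\| - \tfrac1{n_k}\log c_{n_k} \to 0$ in $L^1$ on each component, hence on all of $\mathcal{X}(\mathbb{C})$.

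I expect the main obstacle to be precisely this descent to real sections in the real-embedding case: the naive average $\tfrac12(s + \bar s) = \mathrm{Re}(s)$ could in principle vanish identically or have small sup-norm, destroying the normalization $\|\cdot\|_\infty = 1$, and one cannot freely rescale a section to norm $1$ while keeping $\tfrac1n \log\|\cdot\| \to 0$ unless the rescaling factor is controlled sub-exponentially in $n$. The fix is the dichotomy above: at least one of $\mathrm{Re}(s_{n_k,\sigma})$, $\mathrm{Im}(s_{n_k,\sigma})$ has sup-norm at least $1/\sqrt2$ (since $|s|^2 = |\mathrm{Re}\,s|^2 + |\mathrm{Im}\,s|^2$ and $\|s\|_\infty = 1$), and both have pointwise norm $\leq |s|$ so their $\tfrac1n\log$ is squeezed between $\tfrac1n\log\|s\|$ — which $\to 0$ in $L^1$ — and $\tfrac1n\log$ of a positive constant; this forces their $\tfrac1n\log$ to $0$ in $L^1$ as well, and a rescaling by a factor in $[1,\sqrt2]$ finishes it. For conjugate pairs $(\sigma,\overline\sigma)$ there is genuinely no obstruction: the pair $(s_{n_k,\sigma}, \overline{s_{n_k,\sigma}})$ is automatically $F$-fixed, has sup-norm $1$, and inherits the $L^1$-estimate verbatim. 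Assembling the components gives the desired $s_{n_k} \in H^0(\mathcal{X},n_k\mathcal{L})_\mathbb{R}$.
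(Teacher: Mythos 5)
Your overall strategy --- apply Theorem~\ref{complex Demailly} componentwise on $\mathcal{X}(\mathbb{C}) = \coprod_\sigma \mathcal{X}_\sigma(\mathbb{C})$, arrange a common subsequence, and assemble an $F$-fixed section --- matches the paper's, and your treatment of conjugate pairs $(\sigma,\bar\sigma)$ (taking $(s_\sigma,\bar s_\sigma)$) is correct and identical to the paper's. But the step you yourself flag as the main obstacle, the real-embedding case, has a genuine gap that your fix does not close. First, a minor error: $\operatorname{Re}(s) = \tfrac12(s+\bar s)$ has $(\operatorname{Re} s)(x) = \tfrac12\bigl(s(x) + \overline{s(\bar x)}\bigr)$, which involves the value of $s$ at the \emph{conjugate} point $\bar x$; off the real locus the fiber at $x$ has no canonical real structure, the pointwise identity $|s|^2 = |\operatorname{Re} s|^2 + |\operatorname{Im} s|^2$ fails, and $\|\operatorname{Re} s\|(x)$ can even exceed $\|s\|(x)$. (The weaker bound $\|\operatorname{Re} s\|_\infty + \|\operatorname{Im} s\|_\infty \ge \|s\|_\infty = 1$ from the triangle inequality still gives you one of the two with sup-norm $\ge 1/2$, so that part is repairable.)

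The serious gap is the claimed $L^1$-estimate for $\tfrac1n\log\|\operatorname{Re} s_n\|$. Your ``squeeze'' runs the wrong way: a pointwise upper bound $\|\operatorname{Re} s_n\| \le C$ only controls the \emph{positive} part of $\tfrac1n\log\|\operatorname{Re} s_n\|$, which is harmless anyway; what you need, and do not have, is control of the \emph{negative} mass. The section $\operatorname{Re} s_n$ is a genuinely new holomorphic section whose zero divisor $\div(\operatorname{Re} s_n)$ has nothing to do with $\div(s_n)$ --- it includes all the cancellation loci where $s_n$ and $\bar s_n$ nearly cancel --- and near those new zeros $\log\|\operatorname{Re} s_n\|$ blows up to $-\infty$ with $L^1$-mass that is not bounded by $\|\tfrac1n\log\|s_n\|\|_{L^1}$. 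There is no pointwise lower bound on $\|\operatorname{Re} s_n\|$ (it vanishes somewhere), so the right endpoint of your squeeze simply does not exist. The paper sidesteps this entirely by forming $s_n\cdot\bar s_n \in H^0(\mathcal{X}_\sigma(\mathbb{C}), 2n\mathcal{L}_\sigma(\mathbb{C}))$, which is automatically $F$-fixed (at the cost of doubling the degree) and, crucially, introduces no new zeros: $\log\|s_n\bar s_n\|(x) = \log\|s_n\|(x) + \log\|s_n\|(\bar x)$, and each summand divided by $n$ tends to $0$ in $L^1$ --- the first by hypothesis, the second because the metric and the K\"ahler measure are conjugation-invariant, so $x\mapsto\bar x$ is a measure-preserving change of variables. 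That multiplicative trick is the missing ingredient; the additive symmetrization $\tfrac12(s+\bar s)$ does not have this clean behaviour and the convergence does not come for free.
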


\begin{proof}
	Name the real embeddings as $\sigma_1,\dots,\sigma_r$ and complex embeddings as $\tau_1,\overline{\tau}_1,\dots,\tau_s,\overline{\tau}_s$. Apply Theorem \ref{complex Demailly} to $(\mathcal{X}_{\sigma_i}(\mathbb{C}), \mathcal{L}_{\sigma_i}(\mathbb{C}) )$, $i=1,\dots,r$ and $(\mathcal{X}_{\tau_j}(\mathbb{C}), \mathcal{L}_{\tau_j}(\mathbb{C}) )$, $j=1,\dots,s$. We can find an index sequence $n_k$ and sections at each place
	\begin{itemize}
		\item $s_{n_k,\sigma_i} \in H^0(\mathcal{X}_{\sigma_i}(\mathbb{C}),n_k \mathcal{L}_{\sigma_i}(\mathbb{C}) )$ for $i=1,\dots,r$ such that $\|s_{n_k,\sigma_i}\|_\infty=1$ and $\frac1{n_k} \log\|s_{n_k,\sigma_i}\| \longrightarrow 0$ in $L^1$-topology. (Then $s^\prime_{2n_k,\sigma_i} = s_{n_k,\sigma_i} \cdot \overline{s}_{n_k,\sigma_i} \in H^0(\mathcal{X}_{\sigma_i}(\mathbb{C}),2n_k \mathcal{L}_{\sigma_i}(\mathbb{C}) )$ is real and such that $\|s^\prime_{2n_k,\sigma_i}\|_\infty=1$ and $\frac1{2n_k} \log\|s^\prime_{2n_k,\sigma_i}\| \longrightarrow 0$ in $L^1$-topology.)
		\item $s_{2n_k,\tau_j} \in H^0(\mathcal{X}_{\tau_j}(\mathbb{C}),2n_k \mathcal{L}_{\tau_j}(\mathbb{C}) )$ for $j=1,\dots,s$ such that $\|s_{2n_k,\tau_j} \|_\infty=1$ and $\frac1{2n_k} \log \|s_{2n_k, \tau_j}\| \longrightarrow 0$ in $L^1$-topology.
	\end{itemize}
	Then $s_{2n_k} = (s^\prime_{2n_k,\sigma_1},\dots,s^\prime_{2n_k,\sigma_r}, s_{2n_k,\tau_1}, \overline{s}_{2n_k,\overline{\tau}_1},\dots,s_{2n_k,\tau_s}, \overline{s}_{2n_k,\overline{\tau}_s}) \in H^0(\mathcal{X},2n_k\mathcal{L})_\mathbb{R}$ will \nolinebreak do.
\end{proof}

\begin{remark}
	In the above proof, we performed Demailly approximation simultaneously on several complex manifolds. One can go back to the proof of Theorem \ref{complex Demailly} and see that this can be arranged.
\end{remark}

\subsection{Plurisubharmonic functions}
We briefly recall basic properties of plurisubharmonic (psh) functions that will be used in this article. The reference is \cite{guedj_degenerate_2017}, Chapter VIII.

\begin{definition}
	Let $(X,\omega)$ be a compact K\"{a}hler manifold. A function $f: X \longrightarrow \mathbb{R} \cup \{-\infty\}$ is called \emph{$\omega$-psh} if
	\begin{itemize}
		\item locally in complex topology, $f$ is sum of a psh function and a smooth function.
		\item $dd^cf + \omega$ is a positive $(1,1)$ current.
	\end{itemize}
\end{definition}

The set of all $\omega$-psh functions on $X$ is denoted by $\operatorname{PSH}(X,\omega)$ and is endowed with the $L^1$-topology.
\begin{thm}[\cite{guedj_degenerate_2017} Proposition 8.5]\label{compactness}
	$\operatorname{PSH}_0(X,\omega) = \big\{ f \in \operatorname{PSH}(X,\omega): \sup_X f =0 \big\}$ is compact.
\end{thm}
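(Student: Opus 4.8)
Proof proposal for Theorem \ref{compactness} (the statement $\operatorname{PSH}_0(X,\omega)$ is compact).

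The plan is to realize $\operatorname{PSH}_0(X,\omega)$ as a closed, bounded-in-$L^1$, and equicontinuous-in-a-suitable-sense family, and then invoke a standard compactness principle for subharmonic (hence quasi-psh) functions. More concretely, since $\omega$-psh functions are, up to a fixed smooth potential, subharmonic with respect to a background metric, the whole theory of subharmonic functions applies locally, and the global statement will follow by a covering argument together with the normalization $\sup_X f = 0$.

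First I would record the two key a priori estimates. (1) \emph{Uniform $L^1$ bound}: there is a constant $C = C(X,\omega)$ such that every $f \in \operatorname{PSH}_0(X,\omega)$ satisfies $-C \le \int_X f \,\omega^n \le 0$ and more generally $\|f\|_{L^1(X,\omega^n)} \le C$. This comes from the sub-mean-value inequality: fixing a point where $f$ is close to its supremum $0$, the local sub-mean value property forces $f$ not to be too negative on a ball, and a Harnack-type chaining argument over a finite cover of $X$ by coordinate balls propagates this to a global lower bound on $\int_X f$; combined with $\sup f = 0$ this gives the $L^1$ bound. (2) \emph{Closedness / upper semicontinuity stability}: if $f_j \to f$ in $L^1$ with $f_j \in \operatorname{PSH}(X,\omega)$, then (after passing to the standard upper-semicontinuous regularization) $f \in \operatorname{PSH}(X,\omega)$, because the conditions ``locally a psh function plus a smooth one'' and ``$dd^c f + \omega \ge 0$'' are both stable under $L^1$ (equivalently $\mathcal{D}'$) limits of psh functions. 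Finally one checks that the normalization $\sup_X f = 0$ passes to the limit: $\sup f \le 0$ is automatic from upper semicontinuity and $L^1$ convergence, while $\sup f = 0$ (not $<0$) follows because if $\sup f < 0$ then $\int_X f$ would be bounded away from $0$ in a way incompatible with being an $L^1$-limit of the $f_j$, OR one replaces $f$ by $f - \sup f$ which only improves things — so $\operatorname{PSH}_0$ is closed in $\operatorname{PSH}(X,\omega)$.

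With these in hand, the compactness argument runs as follows. Take a sequence $f_j \in \operatorname{PSH}_0(X,\omega)$. By the uniform $L^1$ bound and the classical fact that a locally uniformly bounded-above family of subharmonic functions is precompact in $L^1_{\mathrm{loc}}$ (the standard Helly-type / weak-$*$ compactness theorem for subharmonic functions, using that their Laplacians are positive measures of bounded mass), a subsequence converges in $L^1(X)$ to some $f \in L^1(X)$. By the closedness step, $f$ (suitably regularized) lies in $\operatorname{PSH}_0(X,\omega)$. Hence every sequence has a convergent subsequence, i.e. $\operatorname{PSH}_0(X,\omega)$ is sequentially compact in the $L^1$-topology; since this topology is metrizable, it is compact.

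The main obstacle, and the only place requiring real care, is the \emph{uniform lower bound on $\int_X f$} in step (1): one must convert the pointwise normalization $\sup_X f = 0$ into a genuine quantitative $L^1$ bound. The point where $f$ attains (nearly) its supremum can be anywhere on $X$, so the estimate must be uniform over base points; this is exactly where one needs the sub-mean-value inequality for $\omega$-psh functions on small balls together with a connectedness/chaining argument over a fixed finite good cover. Everything else — the stability of the psh condition under $L^1$ limits and the local-to-global precompactness of subharmonic families — is classical and can be cited from \cite{guedj_degenerate_2017}, Chapter VIII.
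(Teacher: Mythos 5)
The paper does not prove this statement; it cites Guedj--Zeriahi \cite{guedj_degenerate_2017}, Proposition 8.5, whose proof is essentially what you outline: a uniform $L^1$ bound coming from the normalization $\sup_X f = 0$, the classical precompactness of locally uniformly bounded-above families of (quasi-)psh functions, and closedness of $\operatorname{PSH}_0(X,\omega)$ under $L^1$ limits. So your overall structure is correct and matches the cited source, and your step (1) also does the needed double duty of ruling out the ``$f_j \to -\infty$'' branch of the compactness alternative.

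There is, however, a genuine gap in the closedness step, specifically in showing that the $L^1$ limit $f$ has $\sup_X f = 0$ rather than $\sup_X f < 0$. Neither of your two suggested justifications works. The first --- that $\sup_X f < 0$ would force $\int_X f$ to be ``bounded away from $0$ in a way incompatible with being an $L^1$-limit of the $f_j$'' --- fails because the a priori estimate only gives $\int_X f_j \in [-C,0]$, not $\int_X f_j \to 0$; so $\int_X f \le -\delta\,\mathrm{vol}(X)$ produces no contradiction unless $\delta$ happens to be large. The second --- replacing $f$ by $f - \sup_X f$ --- is not an argument for closedness at all: $f - \sup_X f$ is not the limit of your subsequence, and sequential compactness requires the actual limit to lie in $\operatorname{PSH}_0(X,\omega)$. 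The correct argument is a localized version of your step (1), i.e.\ a Hartogs-type lemma: pick $x_j$ with $f_j(x_j) > -\epsilon_j$ where $\epsilon_j \to 0$, pass to a subsequence with $x_j \to x_0$, apply the sub-mean-value inequality for $\omega$-psh functions on a small ball $B(x_j,r)$, use $f_j \le 0$ to pass to the smaller ball $B(x_0,r/2) \subset B(x_j,r)$, and let $j \to \infty$ via $L^1$ convergence to get $\int_{B(x_0,r/2)} f \ge -\omega(r)\,|B(r)|$ with $\omega(r) \to 0$ as $r \to 0$; if $f \le -\delta$ everywhere this is a contradiction once $r$ is small enough that $\omega(r)\,|B(r)| < \delta\,|B(r/2)|$. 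With this repair, the rest of your outline is fine.
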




\begin{lem}
	Let $(X,\omega)$ be a compact K\"{a}hler manifold and $(f_n)$ be a sequence of $\omega$-psh functions. Then $f_n \longrightarrow 0$ in $L^1$-topology if and only if $f_n \longrightarrow 0$ in measure.
	\label{equivalent convergence lemma}
\end{lem}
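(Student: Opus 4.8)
The plan is to exploit the compactness of $\operatorname{PSH}_0(X,\omega)$ together with the standard measure-theoretic fact that on a finite measure space, $L^1$-convergence implies convergence in measure (this direction is trivial and holds for arbitrary measurable functions), so the only content is the reverse implication. Thus I would assume $f_n \to 0$ in measure and argue that $\|f_n\|_{L^1} \to 0$. Since $f_n \to 0$ in measure, after passing to any subsequence we may extract a further subsequence converging to $0$ almost everywhere; it therefore suffices to show that \emph{every} subsequence of $(f_n)$ has a further subsequence converging to $0$ in $L^1$, which gives $L^1$-convergence of the whole sequence by the usual subsequence argument.

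So fix a subsequence; relabel it $(f_n)$. First I would establish a uniform upper bound: $\sup_X f_n$ is bounded above. Indeed, if $\sup_X f_n \to +\infty$ along a subsequence, then for $\omega$-psh functions the sub-mean-value property forces the volume of $\{f_n \geq M\}$ to stay bounded below for each fixed $M$ (an $\omega$-psh function cannot be very negative on a large set and very large at a point without violating the mean-value inequality against a fixed reference measure), contradicting $f_n \to 0$ in measure. More carefully: normalize $g_n = f_n - \sup_X f_n \in \operatorname{PSH}_0(X,\omega)$; by Theorem \ref{compactness} (compactness of $\operatorname{PSH}_0(X,\omega)$) the $g_n$ have a subsequence converging in $L^1$ to some $g \in \operatorname{PSH}_0(X,\omega)$, and $g \not\equiv -\infty$. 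If $c_n := \sup_X f_n$ were unbounded above, then $f_n = g_n + c_n$ would tend to $+\infty$ in $L^1$ on the set where $g_n$ is not too negative, hence would not converge to $0$ in measure — contradiction. Similarly, if $c_n \to -\infty$ then $f_n \to -\infty$ in measure, again a contradiction. Hence $(c_n)$ is bounded, and by passing to a further subsequence we may assume $c_n \to c$ for some finite $c$.

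With $c_n \to c$ and $g_n \to g$ in $L^1$ (along the subsequence, using Theorem \ref{compactness}), we get $f_n = g_n + c_n \to g + c$ in $L^1$. But $L^1$-convergence implies convergence in measure, so $f_n \to g+c$ in measure; combined with the hypothesis $f_n \to 0$ in measure and uniqueness of limits in measure, we conclude $g + c = 0$ a.e., hence $f_n \to 0$ in $L^1$ along this subsequence. Since the initial subsequence was arbitrary, the whole sequence converges to $0$ in $L^1$, completing the proof.

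The main obstacle is the uniform bound on $\sup_X f_n$: this is where the $\omega$-psh hypothesis is genuinely used, and it must be extracted from the compactness statement of Theorem \ref{compactness} rather than from any pointwise control. The key mechanism is that the normalized family $\{f_n - \sup_X f_n\}$ lives in the compact set $\operatorname{PSH}_0(X,\omega)$, so its $L^1$-limits are honest $\omega$-psh functions (in particular not identically $-\infty$), which pins down the possible vertical translations $\sup_X f_n$ and rules out escape to $\pm\infty$. Once that is in hand, everything reduces to the elementary interplay between $L^1$-convergence and convergence in measure on the finite measure space $(X,\omega^{\dim X})$.
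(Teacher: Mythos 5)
Your proposal is correct and follows essentially the same strategy as the paper: reduce to the direction ``convergence in measure implies $L^1$-convergence,'' run the subsequence-of-subsequence argument, invoke compactness of $\operatorname{PSH}_0(X,\omega)$ to get $f_{n_{i_j}}+c_{n_{i_j}}\to f$ in $L^1$, and then pin down the constants by comparing with the measure-convergence hypothesis. The only cosmetic difference is bookkeeping: the paper deduces directly from ``$c_{n_{i_j}}\to f$ in measure with $c_{n_{i_j}}$ constant and $f$ genuinely $\omega$-psh'' that $f$ is a.e.\ a finite constant $c$ and $c_{n_{i_j}}\to c$, whereas you first argue boundedness of $\sup_X f_n$ and extract a convergent subsequence of the constants; both routes are valid and amount to the same underlying observation that $\pm\infty$ escape of the normalizing constants is incompatible with $f_n\to 0$ in measure.
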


\begin{remark}
Recall on a probability space $(X,\mu)$, a sequence of measurable functions $(f_n)$ is said to converge to $f$ in measure if for any $\varepsilon>0$,
\begin{flalign*}
 \quad \quad \quad &
 \lim_{n\longrightarrow \infty}\mu\{|f_n-f|>\varepsilon\}=0.
 &&
\end{flalign*}
It is known that if $f_n\longrightarrow f$ in $L^1$, then $f_n\longrightarrow f$ in measure. The coverse is in general false.
\end{remark}

\begin{proof}
	From discussion above, we only prove sufficiency. For any subsequence $(f_{n_i})$ of $(f_n)$, by compactness (Theorem \ref{compactness}), there exists a subsubsequence $(f_{n_{i_j}})$ and constant numbers $(c_{n_{i_j}})$ such that $f_{n_{i_j}} + c_{n_{i_j}} \longrightarrow f$ in $L^1$-topology. In particular, $f_{n_{i_j}} + c_{n_{i_j}} \longrightarrow f$ in measure.

	By assumption $f_n \longrightarrow 0$ in measure, so $c_{n_{i_j}} \longrightarrow f$ in measure. Since $c_{n_{i——j}}$ are constants, $f=c$ is constant almost everywhere. Therefore, $c_n \longrightarrow c$ and $f_{n_{i_j}} \longrightarrow 0$ in $L^1$-topology.

	We have proved that for any subsequence $f_{n_i}$, there exists a subsubsequence $f_{n_{i_j}} \longrightarrow 0$ in $L^1$-topology. This forces the whole sequence $f_n \longrightarrow 0$ in $L^1$-topology.
\end{proof}

\subsection{Integral sections}
\begin{lem}
	Let $X$ be a compact K\"{a}hler manifold and $\overline{L}=(L,\|\cdot\|)$ be a holomorphic line bundle on $X$ with smooth and positive Hermitian metric $\|\cdot\|$. Let $s_n, s_n^\prime \in H^0(X,nL)$ be holomorphic sections such that $\|s_n\|_\infty = 1$ and $\frac1n \log\|s_n\| \longrightarrow 0$ in $L^1$-topology.
	
	If there exists $0<k<1$ such that $\|s_n-s_n^\prime\|_\infty \leq k^n$, then $\|s_n^\prime\|_\infty \longrightarrow 1$ and $\frac1n \log\|s_n^\prime\| \longrightarrow 0$ in $L^1$-topology.
		\label{k^n then ok}
\end{lem}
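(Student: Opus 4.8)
The plan is to control $\|s_n^\prime\|$ both pointwise from below and in $L^1$ from above, using the hypothesis $\|s_n - s_n^\prime\|_\infty \le k^n$ together with what we already know about $s_n$.

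First I would establish the sup-norm convergence. The triangle inequality gives $\bigl| \|s_n^\prime\|_\infty - \|s_n\|_\infty \bigr| \le \|s_n - s_n^\prime\|_\infty \le k^n$, and since $\|s_n\|_\infty = 1$ we get $\|s_n^\prime\|_\infty \to 1$ immediately. Replacing $s_n^\prime$ by $s_n^\prime / \|s_n^\prime\|_\infty$ changes $\frac1n \log \|s_n^\prime\|$ by $-\frac1n \log \|s_n^\prime\|_\infty \to 0$, so for the $L^1$-convergence it is harmless to assume $\|s_n^\prime\|_\infty = 1$ as well; I would then note $\|s_n - s_n^\prime\|_\infty \le 2k^n$, say, after renormalization (or argue directly without renormalizing).

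The core of the argument is the $L^1$-estimate on $\frac1n \log \|s_n^\prime\|$, and by Lemma \ref{equivalent convergence lemma} it suffices to prove convergence in measure. Fix $\varepsilon > 0$. On the set where $\|s_n(x)\| \ge e^{-n\varepsilon}$ (which, since $\frac1n \log\|s_n\| \to 0$ in measure, has measure $\to 1$), one has
\begin{align*}
\|s_n^\prime(x)\| &\ge \|s_n(x)\| - \|s_n - s_n^\prime\|_\infty \ge e^{-n\varepsilon} - k^n,
\end{align*}
and since $k < 1$ we may choose $\varepsilon$ small (indeed any $\varepsilon < -\log k$ works, but we only need it for small $\varepsilon$) so that $e^{-n\varepsilon} - k^n \ge \tfrac12 e^{-n\varepsilon}$ for $n$ large, giving $\frac1n \log\|s_n^\prime(x)\| \ge -\varepsilon - \frac{\log 2}{n} \ge -2\varepsilon$ for $n$ large. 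So the lower tail $\{ \frac1n \log\|s_n^\prime\| < -2\varepsilon \}$ has measure $\to 0$. For the upper tail, I would use the uniform upper bound coming from $\|s_n^\prime\|_\infty \to 1$ (or $\le$ some fixed constant): $\frac1n \log\|s_n^\prime\| \le \frac1n \log\|s_n^\prime\|_\infty \to 0$ everywhere, so the upper tail is eventually empty. Combining, $\frac1n \log\|s_n^\prime\| \to 0$ in measure, hence in $L^1$ by Lemma \ref{equivalent convergence lemma}.

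The main obstacle, such as it is, is the lower bound near the zero locus of $s_n$: the estimate $\|s_n^\prime\| \ge \|s_n\| - k^n$ is only useful where $\|s_n\|$ is not exponentially small, which is why one must work in measure rather than pointwise everywhere, and why the hypothesis $\frac1n\log\|s_n\| \to 0$ (packaged via convergence in measure) is exactly what makes the argument go through. One should double-check that the exceptional set — where $\|s_n\|$ is very small — genuinely has measure tending to $0$ and does not need finer control, but this is precisely Lemma \ref{equivalent convergence lemma} applied to $s_n$, so there is no real difficulty.
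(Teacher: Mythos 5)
Your proof is correct and takes the same approach as the paper: reduce to convergence in measure via Lemma~\ref{equivalent convergence lemma}, then use the pointwise bound $\|s_n'(x)\| \ge \|s_n(x)\| - k^n$ on the set where $\|s_n\|$ is not exponentially small. The paper dismisses this last step as ``obvious''; you have simply supplied the details, including the correct observation that one needs $\varepsilon < -\log k$ so that $k^n$ is negligible against $e^{-n\varepsilon}$.
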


\begin{proof}
	By Lemma \ref{equivalent convergence lemma} we need only to show $\frac1n \log\|s_n^\prime\| \longrightarrow 0$ in measure, which is obvious.
\end{proof}

Let $\lambda_\mathrm{last}(n\overline{\mathcal{L}})$ be the last Minkowski minimum of the lattice $H^0(\mathcal{X},n\mathcal{L})\subseteq H^0(\mathcal{X},n\mathcal{L})_\mathbb{R}$. One version of Zhang's Nakai-Moishezon is the following
\begin{thm}[Zhang \cite{zhang_positive_1992} \cite{zhang_positive_1995}]
	Let $\mathcal{X}/\mathbb{Z}$ be an arithmetic variety, normal and generically smooth and $\overline{\mathcal{L}}$ be an ample Hermitian line bundle. Then there exists $0<k<1$ such that when $n$ is large, $\lambda_\mathrm{last}(n\overline{\mathcal{L}}) \leq k^n$.
	\label{Zhang Nakai-Moishezon}
\end{thm}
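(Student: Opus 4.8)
The plan is to deduce the exponential decay of the last Minkowski minimum from the qualitative part of Zhang's theorem --- that an arithmetically ample $\overline{\mathcal{L}}$ has, in one (hence every sufficiently large) degree, a family of strictly effective global sections spanning the relevant real vector space --- by taking products and using sub-multiplicativity of the sup-norm. The reduction rests on two elementary lattice observations: (a) if $\Lambda'\subseteq\Lambda$ are lattices with the same $\mathbb{Q}$-span, then $\lambda_j(\Lambda)\le\lambda_j(\Lambda')$ for all $j$, since $j$ independent short vectors of $\Lambda'$ also lie in $\Lambda$; and (b) if $e_1,\dots,e_M\in\Lambda$ span the ambient space over $\mathbb{Q}$, then $\lambda_\mathrm{last}(\Lambda)\le\max_i\|e_i\|$, because a basis can be extracted from the $e_i$. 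Since $\|s\cdot t\|_\infty\le\|s\|_\infty\,\|t\|_\infty$ pointwise on $\mathcal{X}(\mathbb{C})$ for product sections of tensor powers of $\overline{\mathcal{L}}$, it suffices to exhibit, for every large $n$, a set of \emph{integral} sections of $n\mathcal{L}$ that spans $H^0(\mathcal{X},n\mathcal{L})_\mathbb{R}$ over $\mathbb{Q}$ and whose sup-norms are $\le k^n$.

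I would fix $m_1\gg0$ so that all of the following hold at once: (i) by Zhang's theorem (\cite{zhang_positive_1992}, \cite{zhang_positive_1995}) there exist $t_1,\dots,t_p\in H^0(\mathcal{X},m_1\mathcal{L})$ with $\|t_j\|_\infty\le c$ for some fixed $c<1$ that span $H^0(\mathcal{X},m_1\mathcal{L})_\mathbb{R}$; (ii) the section ring $\bigoplus_{a\ge0}H^0(\mathcal{X}_\mathbb{Q},am_1\mathcal{L})$ is generated in degree one (standard, $\mathcal{L}_\mathbb{Q}$ being ample); and (iii) there is $b_0$ such that for every $s\in\{0,1,\dots,m_1-1\}$ and every $b\ge b_0$ the multiplication map $H^0(\mathcal{X}_\mathbb{Q},bm_1\mathcal{L})\otimes H^0(\mathcal{X}_\mathbb{Q},s\mathcal{L})\to H^0(\mathcal{X}_\mathbb{Q},(bm_1+s)\mathcal{L})$ is surjective (Serre vanishing and global generation applied to the finitely many coherent sheaves $\mathcal{L}^s$). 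For each $s$ fix a $\mathbb{Z}$-basis $w^{(s)}_1,\dots$ of the lattice $H^0(\mathcal{X},s\mathcal{L})$ and set $C=\max_{s,i}\|w^{(s)}_i\|_\infty<\infty$. Given $n\ge b_0 m_1$, write $n=bm_1+s$ with $0\le s\le m_1-1$ and $b=\lfloor n/m_1\rfloor\ge b_0$; then the integral sections $t_{j_1}\cdots t_{j_b}\cdot w^{(s)}_i$ of $n\mathcal{L}$ span $H^0(\mathcal{X},n\mathcal{L})_\mathbb{R}$ over $\mathbb{Q}$ by (ii) and (iii), and each has sup-norm at most $Cc^b$. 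By observation (b), $\lambda_\mathrm{last}(n\overline{\mathcal{L}})\le Cc^b\le (C/c)\,(c^{1/m_1})^n$, so any $k$ with $c^{1/m_1}<k<1$ gives $\lambda_\mathrm{last}(n\overline{\mathcal{L}})\le k^n$ for all large $n$.

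The substantive ingredient is step (i): it is exactly Zhang's arithmetic Nakai--Moishezon theorem --- ampleness of $\overline{\mathcal{L}}$ in the sense fixed above produces strictly effective sections generating a high twist --- and I would invoke it as a black box; this is where all the arithmetic positivity enters. Everything else is formal: sub-multiplicativity of the sup-norm, the two lattice inequalities, and the passage from degrees divisible by $m_1$ to arbitrary large degrees. The only point needing a little care is arranging the generic-fibre statements (ii)--(iii) uniformly over the finitely many residues $s$, which is why $m_1$ and $b_0$ are chosen once and for all before forming the products; beyond that there is no real obstacle.
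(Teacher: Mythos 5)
The paper does not prove this theorem; it is cited as Zhang's arithmetic Nakai--Moishezon theorem with no argument supplied. Your proposal is correct and is a genuinely useful addition: it makes explicit how the quantitative, exponential-decay form of the statement follows from the qualitative form (that for some fixed $m_1$, strictly effective integral sections span $H^0(\mathcal{X},m_1\mathcal{L})_\mathbb{R}$, which is what Zhang's theorem directly gives). The mechanism is exactly right: fix $m_1$ once so that (i) small sections $t_1,\dots,t_p$ of norm $\le c<1$ span, (ii) the section ring of $m_1\mathcal{L}_\mathbb{Q}$ is generated in degree one, and (iii) the finitely many remainder multiplication maps are surjective for $b\ge b_0$ (Castelnuovo--Mumford regularity of the finitely many sheaves $s\mathcal{L}_\mathbb{Q}$, $0\le s<m_1$, handles both (ii) and (iii) uniformly); then the integral products $t_{j_1}\cdots t_{j_b}w_i^{(s)}$ $\mathbb{Q}$-span $H^0(\mathcal{X},n\mathcal{L})_\mathbb{Q}$ and have sup-norm $\le Cc^b$, whence $\lambda_\mathrm{last}(n\overline{\mathcal{L}})\le Cc^{\lfloor n/m_1\rfloor}\le (C/c)(c^{1/m_1})^n$, and any $k$ strictly between $c^{1/m_1}$ and $1$ works for $n$ large. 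The crucial and correct point is that observation (b) needs only a $\mathbb{Q}$-spanning set of lattice vectors, not a $\mathbb{Z}$-basis, so the generic-fibre surjectivity statements suffice. Two tiny remarks: your observation (a) is never actually used (you apply (b) directly to the full lattice $H^0(\mathcal{X},n\mathcal{L})$, not to a sublattice generated by the products); and the uniform bound $c<1$ over the finitely many $t_j$ is automatic by finite-dimensionality once one spanning set with each norm $<1$ is found, which you implicitly use and could state.
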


\begin{cor}\label{2}
	There exists $0<k<1$ such that for any real section $s_n \in H^0(\mathcal{X},n\mathcal{L})_\mathbb{R}$, there exists an integral section $s_n^\prime \in H^0(\mathcal{X},n\mathcal{L})$ such that $\|s_n-s_n^\prime\|_\infty \leq k^n$.
	\label{k^n}
\end{cor}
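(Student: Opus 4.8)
The statement is one about the covering radius of the lattice $H^0(\mathcal{X},n\mathcal{L})$ inside the normed $\mathbb{R}$-vector space $H^0(\mathcal{X},n\mathcal{L})_\mathbb{R}$. Writing $\mu_n$ for this covering radius, i.e. the supremum over $v\in H^0(\mathcal{X},n\mathcal{L})_\mathbb{R}$ of the $\|\cdot\|_\infty$-distance from $v$ to the lattice, the desired conclusion ``every real section lies within $k^n$ of an integral section'' is exactly the inequality $\mu_n\le k^n$. So the plan is to bound $\mu_n$ by the last Minkowski minimum $\lambda_\mathrm{last}(n\overline{\mathcal{L}})$ and then feed in Zhang's Theorem \ref{Zhang Nakai-Moishezon}.

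The key comparison is the elementary lattice inequality $\mu_n\le\tfrac12\,r_n\,\lambda_\mathrm{last}(n\overline{\mathcal{L}})$, where $r_n=\dim_\mathbb{R} H^0(\mathcal{X},n\mathcal{L})_\mathbb{R}$. To prove it, choose linearly independent integral sections $v_1,\dots,v_{r_n}\in H^0(\mathcal{X},n\mathcal{L})$ with $\|v_i\|_\infty\le\lambda_\mathrm{last}(n\overline{\mathcal{L}})$ for all $i$; such sections exist by the very definition of the last minimum. They span a finite-index sublattice $\Lambda'=\bigoplus_i\mathbb{Z}v_i$. Given any real section $s_n$, write $s_n=\sum_i t_iv_i$ with $t_i\in\mathbb{R}$, round each $t_i$ to a nearest integer $m_i$, and set $s_n'=\sum_i m_iv_i\in\Lambda'\subseteq H^0(\mathcal{X},n\mathcal{L})$. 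Then, by the triangle inequality and $|t_i-m_i|\le\tfrac12$, we get $\|s_n-s_n'\|_\infty=\|\sum_i(t_i-m_i)v_i\|_\infty\le\tfrac12\sum_i\|v_i\|_\infty\le\tfrac12\,r_n\,\lambda_\mathrm{last}(n\overline{\mathcal{L}})$. This is the only real computation, and it is routine.

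It remains to combine these inputs. By Theorem \ref{Zhang Nakai-Moishezon} there is $0<k_0<1$ with $\lambda_\mathrm{last}(n\overline{\mathcal{L}})\le k_0^n$ for all large $n$, hence $\mu_n\le\tfrac12\,r_n\,k_0^n$ for all large $n$. Since $r_n$ coincides with $h^0(\mathcal{X}_\mathbb{Q},n\mathcal{L}_\mathbb{Q})$ and so grows only polynomially in $n$, for any fixed $k$ with $k_0<k<1$ we have $\tfrac12\,r_n\,k_0^n\le k^n$ once $n$ is large enough; this $k$ does the job. (For the finitely many remaining small $n$ there is nothing of substance: if $H^0(\mathcal{X},n\mathcal{L})=0$ the only real section is $0$, and otherwise the estimate is anyway needed, and used, only along the index sequence $n_k\to\infty$ furnished by Proposition \ref{real sections}, much as the conclusion of Theorem \ref{Zhang Nakai-Moishezon} is itself stated only for large $n$.)

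I do not expect a serious obstacle here: the arithmetic depth is entirely contained in Zhang's arithmetic Nakai-Moishezon theorem (already invoked as Theorem \ref{Zhang Nakai-Moishezon}), which supplies exponentially small last minima for an ample Hermitian line bundle. Granting that, Corollary \ref{k^n} is a formal consequence of the standard covering-radius versus successive-minima comparison for lattices together with the trivial polynomial growth of $\dim H^0$, the latter being absorbed by passing from the base $k_0$ to a slightly larger $k<1$.
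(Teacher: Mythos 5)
Your proof is correct and fills in exactly the argument the paper leaves implicit: the paper states Corollary~\ref{k^n} without proof as an immediate consequence of Theorem~\ref{Zhang Nakai-Moishezon}, and the intended step is precisely the standard covering-radius bound you spell out. Your rounding argument over a full-rank sublattice spanned by short vectors gives $\mu_n\le\tfrac12 r_n\,\lambda_{\mathrm{last}}(n\overline{\mathcal{L}})$, and absorbing the polynomial factor $r_n$ by enlarging $k_0$ to a nearby $k<1$ is exactly how the exponential bound is meant to be obtained; your remark that the conclusion is only needed along the sequence $n_k\to\infty$, matching the ``for $n$ large'' hypothesis of Zhang's theorem, is also the right reading of how the corollary is used.
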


\begin{thm}
	Let $\mathcal{X}/\mathbb{Z}$ be an arithmetic variety, normal and generically smooth. Let $\overline{\mathcal{L}}$ be an ample Hermitian line bundle.
	There exists $s_{n_i} \in H^0(\mathcal{X},n_i\mathcal{L})$ such that $\|s_{n_i}\|_\infty \longrightarrow 1$ and $\frac1{n_i} \log\|s_{n_i}\| \longrightarrow 0$ in $L^1$-topology.
	\label{arithmetic Demailly infinite}
\end{thm}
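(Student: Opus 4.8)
The plan is to deduce this from the three results just established: Proposition \ref{real sections} gives \emph{real} sections with the desired analytic behaviour, Corollary \ref{k^n} (Zhang's Nakai--Moishezon, Theorem \ref{Zhang Nakai-Moishezon}) lets us round them to genuine \emph{integral} sections at a cost of $k^n$ in sup-norm, and Lemma \ref{k^n then ok} says such a perturbation does not destroy the Demailly estimate.

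First I would apply Proposition \ref{real sections} to get an index sequence $(n_k)$ and real sections $t_{n_k}\in H^0(\mathcal{X},n_k\mathcal{L})_\mathbb{R}$ with $\|t_{n_k}\|_\infty=1$ and $\frac1{n_k}\log\|t_{n_k}\|\longrightarrow 0$ in $L^1(\mathcal{X}(\mathbb{C}))$. Inspecting the construction in that proof, the restriction of $t_{n_k}$ to each connected component $\mathcal{X}_\sigma(\mathbb{C})$ has sup-norm exactly $1$ (each block $s'_{2n_k,\sigma_i}$, $s_{2n_k,\tau_j}$, $\overline{s}_{2n_k,\overline{\tau}_j}$ is normalised). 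Next, let $0<k<1$ be the constant from Corollary \ref{k^n}; for every large $n_k$ choose $s_{n_k}\in H^0(\mathcal{X},n_k\mathcal{L})$ with $\|t_{n_k}-s_{n_k}\|_\infty\le k^{n_k}$. Since $k^{n_k}\to 0$, the section $s_{n_k}$ is nonzero for $k$ large and $1-k^{n_k}\le\|s_{n_k}\|_\infty\le 1+k^{n_k}$, so $\|s_{n_k}\|_\infty\longrightarrow 1$. Finally I would fix a component $\mathcal{X}_\sigma(\mathbb{C})$, which is a smooth projective (hence compact Kähler, with Kähler form $c_1(\overline{\mathcal{L}}_\sigma)$) complex manifold, and observe that the restrictions of $t_{n_k}$ and $s_{n_k}$ to it satisfy the hypotheses of Lemma \ref{k^n then ok}; therefore $\frac1{n_k}\log\|s_{n_k}\|\longrightarrow 0$ in $L^1(\mathcal{X}_\sigma(\mathbb{C}))$. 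As $\mathcal{X}(\mathbb{C})$ is the finite disjoint union of these components, the same convergence holds in $L^1(\mathcal{X}(\mathbb{C}))$, and relabelling $(n_k)$ as $(n_i)$ finishes the proof.

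This is essentially a bookkeeping argument once the cited results are available, so I do not expect a genuine obstacle. The one point that needs care is that Lemma \ref{k^n then ok} is phrased for a connected Kähler manifold carrying a section of sup-norm exactly $1$: one must pass to the individual components $\mathcal{X}_\sigma(\mathbb{C})$ and use the fact, noted above, that the real section produced by Proposition \ref{real sections} is already normalised on each of them (alternatively, one could bypass the lemma and run its ``convergence in measure plus \textup{PSH}-compactness'' argument directly, since $\|s_{n_k}\|\le 1+k^{n_k}$ pointwise and $\|s_{n_k}\|\ge\|t_{n_k}\|-k^{n_k}$ forces $\frac1{n_k}\log\|s_{n_k}\|\to 0$ in measure on each component).
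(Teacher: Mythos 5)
Your proof is correct and follows the paper's own argument exactly: the paper's proof of this theorem is the one-line remark that it is a combination of Proposition \ref{real sections}, Corollary \ref{k^n}, and Lemma \ref{k^n then ok}, which is precisely the three-step argument you give. The one useful thing you add is the explicit check that the real section from Proposition \ref{real sections} is normalised to sup-norm $1$ on each connected component $\mathcal{X}_\sigma(\mathbb{C})$, so that Lemma \ref{k^n then ok} (and the underlying PSH-compactness, Theorem \ref{compactness}) can be applied componentwise; the paper leaves this implicit.
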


\begin{proof}
	This is a combination of Proposition \ref{real sections}, Corollary \ref{k^n} and Lemma \ref{k^n then ok}.
\end{proof}

\section{Arithmetic Demailly approximation II: finite places}
Throughout this esction, $k$ is the number in Corollary \ref{2}.

We fix an arithmetic variety $\mathcal{X}/\mathbb{Z}$ and an ample Hermitian line bundle $\overline{\mathcal{L}}$. Then $H^0(\mathcal{X}, n\mathcal{L})_\mathbb{R}$ is a normed $\mathbb{R}$-vector space and $H^0(\mathcal{X}, n\mathcal{L}) \subseteq H^0(\mathcal{X}, n\mathcal{L})_\mathbb{R}$ is a lattice. Let $B_n(s,R)$ denote the ball in $H^0(\mathcal{X}, n\mathcal{L})_\mathbb{R}$ with center $s$ and radius $R$.

In Theorem \ref{arithmetic Demailly infinite} we constructed a sequence of integral sections $s_{n_i} \in H^0(\mathcal{X},n_i\mathcal{L})$ such that $\|s_{n_i}\|_\infty \longrightarrow 1$ and $\frac1{n_i} \log\|s_{n_i}\| \longrightarrow 0$ in $L^1$-topology. Thanks to Lemma \ref{k^n then ok}, any sequence of sections in $B_{n_i}(s_{n_i},k^{n_i}) \cap H^0(\mathcal{X},n_i\mathcal{L})$ still has the desired convergence property at infinite places. The goal of this section is to find ``good'' sections in $B_{n_i}(s_{n_i},k^{n_i}) \cap H^0(\mathcal{X},n_i\mathcal{L})$ to meet other requirements. 


\subsection{Charles calculus}
We present several counting theorems of Charles \cite{charles_arithmetic_2021} with one minor generalization: Charles only considered balls that are centered at $0$ whereas in our application, balls are centered at arbitrary lattice point $s_n \in H^0(\mathcal{X},n\mathcal{L})$.

\begin{lem}[Charles \cite{charles_arithmetic_2021} Proposition 2.14]
	Let $\alpha$ and $\eta$ be positive real numbers with $0<\alpha<1$. Let $C$ be a real number. Then there exists a positive real number $\eta^\prime$ such that for any sequence of lattice points $s_n \in H^0(\mathcal{X},n\mathcal{L})$ and any $R_n>e^{-n^\alpha}$, we have
	\[
		\frac{ \left| \# \Big( B_n(s_n, R_n+Ce^{-\eta n}) \cap H^0(\mathcal{X},n\mathcal{L}) \Big) - \#\Big( B_n(s_n, R_n) \cap H^0(\mathcal{X},n\mathcal{L}) \Big) \right| }{ \# \Big( B_n(s_n, R_n) \cap H^0(\mathcal{X},n\mathcal{L}) \Big) } = O(e^{-\eta^\prime n})
	\]
	as $n \longrightarrow \infty$. The implicit constant in the big $O$ notation does not depend on $s_n$.
	\label{Charles bigger ball lemma}
\end{lem}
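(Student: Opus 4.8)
The plan is to follow Charles' proof of \cite{charles_arithmetic_2021}, Proposition 2.14, and check that the argument is insensitive to translating the ball by a lattice point. The key observation is that the quantity we must estimate only depends on the \emph{shape} of the annulus $B_n(s_n, R_n + Ce^{-\eta n}) \setminus B_n(s_n, R_n)$ relative to the lattice $H^0(\mathcal{X}, n\mathcal{L})$, and translating the center from $0$ to $s_n$ does not change the lattice, only the region. So the whole point is that Charles' covolume-vs-volume comparison and the successive-minima bounds he uses are translation-invariant, or can be made so.

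First I would set $\Lambda_n = H^0(\mathcal{X}, n\mathcal{L})$ with its sup-norm lattice structure inside $V_n = H^0(\mathcal{X}, n\mathcal{L})_\mathbb{R}$, and recall the two ingredients Charles uses: (i) a lower bound $\# (B_n(0,R_n) \cap \Lambda_n) \gg e^{c n^{d+1}}$ for $R_n > e^{-n^\alpha}$, coming from the bigness of $\overline{\mathcal{L}}$ (arithmetic Hilbert–Samuel / Zhang's inequality, Theorem \ref{Zhang Nakai-Moishezon}), and (ii) a volume estimate showing that the annulus of width $Ce^{-\eta n}$ around a ball of radius $R_n$ has relative volume $O(e^{-\eta' n})$ once $\eta'$ is chosen small relative to $\eta$ and $\alpha$. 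For (i), since $\# (B_n(s_n, R_n) \cap \Lambda_n) \ge \# (B_n(0, R_n - \|s_n\|_\infty) \cap \Lambda_n)$ is useless when $\|s_n\|_\infty$ is large, the right move is instead to note that $B_n(s_n, R_n) \cap \Lambda_n$ is a translate-and-intersect which, by Minkowski-type counting, still contains $\gg \mathrm{vol}(B_n(s_n,R_n)) / \mathrm{covol}(\Lambda_n) = \mathrm{vol}(B_n(0,R_n))/\mathrm{covol}(\Lambda_n)$ points up to an error controlled by the surface area of the ball and the last successive minimum $\lambda_{\mathrm{last}}(n\overline{\mathcal{L}}) \le k^n$; this is exactly the content that makes the count essentially independent of the center.

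Next I would handle the numerator: the cardinality difference is bounded by the number of lattice points in the annulus $A_n = B_n(s_n, R_n + Ce^{-\eta n}) \setminus B_n(s_n, R_n)$. By the same Minkowski-style counting (translation-invariant because it only references $\Lambda_n$ and the region's volume and its "$\lambda_{\mathrm{last}}$-fattened" boundary), $\# (A_n \cap \Lambda_n) \ll \mathrm{vol}(A_n \text{ fattened by } \lambda_{\mathrm{last}}) / \mathrm{covol}(\Lambda_n)$. In sup-norm balls in $\mathbb{R}^{r_n}$ with $r_n = \dim V_n \asymp n^d$, one has $\mathrm{vol}(A_n) / \mathrm{vol}(B_n(0,R_n)) \le (1 + Ce^{-\eta n}/R_n)^{r_n} - 1 \le (1 + Ce^{-\eta n} e^{n^\alpha})^{r_n} - 1$, and since $r_n = O(n^d)$ this is $O(n^d e^{-(\eta - n^{\alpha-1})n} e^{O(n^d)\cdot\text{tiny}})$ — one must choose $\alpha$ in the right range so the $n^\alpha$ in the exponent and the $n^d$ from the dimension do not swamp the gain $e^{-\eta n}$; this is precisely where $\eta'$ gets chosen (smaller than $\eta$, and the hypothesis $0 < \alpha < 1$ together with Charles' original constraints ensures it works). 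Dividing by the denominator lower bound from the previous step yields the claimed $O(e^{-\eta' n})$, with the implicit constant absorbing $\mathrm{covol}(\Lambda_n)$-type quantities which, crucially, do not see $s_n$.

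The main obstacle I anticipate is not any single estimate but the bookkeeping that makes the denominator bound genuinely uniform in $s_n$. A ball $B_n(s_n, R_n)$ with tiny radius $R_n \approx e^{-n^\alpha}$ centered far from $0$ could, a priori, contain very few lattice points — or none — if the lattice were badly behaved; what rescues us is Theorem \ref{Zhang Nakai-Moishezon}, i.e. $\lambda_{\mathrm{last}}(n\overline{\mathcal{L}}) \le k^n \ll e^{-n^\alpha}$, so the lattice is "fine" on the scale $R_n$ and the Minkowski count $\# (B_n(s_n,R_n)\cap\Lambda_n) \asymp \mathrm{vol}(B_n(0,R_n))/\mathrm{covol}(\Lambda_n)$ holds regardless of the center. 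Making this last equivalence precise and uniform — rather than merely asymptotic for a fixed translate — is the step that requires care, and it is the one place where we genuinely need Charles' Proposition 2.14 machinery rather than a soft argument. Once that is in hand, the translation-invariance is automatic and the proof concludes as above.
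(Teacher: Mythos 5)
The paper's own proof of this lemma is a one-liner, and you should notice the shortcut: since $s_n$ is required to be an element of $H^0(\mathcal{X},n\mathcal{L})$, the translation $\sigma \mapsto \sigma - s_n$ is a bijection of the lattice $\Lambda_n = H^0(\mathcal{X},n\mathcal{L})$ onto itself, and it carries $B_n(s_n,R)\cap\Lambda_n$ bijectively onto $B_n(0,R)\cap\Lambda_n$ for every $R$. Hence $\#\bigl(B_n(s_n,R)\cap\Lambda_n\bigr) = \#\bigl(B_n(0,R)\cap\Lambda_n\bigr)$ \emph{exactly}, for both $R=R_n$ and $R=R_n+Ce^{-\eta n}$, and the lemma is literally Charles' Proposition 2.14 after substituting. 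No re-derivation is needed, no covolume or Minkowski estimate, no fresh appeal to Theorem \ref{Zhang Nakai-Moishezon}, and the uniformity in $s_n$ is automatic because the two sides of the bijection do not see $s_n$ at all.

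Your proposal flags as ``the main obstacle'' the possibility that a small ball $B_n(s_n,R_n)$ centered far from the origin might contain few or no lattice points; this concern is vacuous under the lemma's hypothesis. A ball centered at a lattice point contains precisely as many lattice points as the ball of the same radius centered at $0$, so the denominator is identical to Charles' and needs no separate treatment. You do state the right key observation at the outset --- that ``translating the center from $0$ to $s_n$ does not change the lattice, only the region'' --- but you then discard it in favor of re-running Charles' volume-vs-covolume argument from scratch, tracking translation-invariance of each step and worrying (correctly, but needlessly) about making the asymptotics uniform in the center. That program would likely succeed, and indeed it is essentially how one \emph{would} have to argue if $s_n$ were allowed to be an arbitrary point of $V_n$ rather than a lattice point. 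But as stated, the hypothesis $s_n\in H^0(\mathcal{X},n\mathcal{L})$ collapses the whole argument to the bijection above; re-proving Charles' proposition is a detour, and the part of your writeup you explicitly leave incomplete (``Making this last equivalence precise and uniform $\ldots$ requires care'') is exactly the part the exact bijection renders unnecessary.
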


\begin{proof}
	Charles \cite{charles_arithmetic_2021}, Proposition 2.14 shows the lemma holds with center at $0$. When we consider balls centered at general lattice points $s_n$, there is no difference: the number of lattice points in a ball centered at any lattice point is equal. 
\end{proof}


\begin{thm}[Charles \cite{charles_arithmetic_2021} Theorem 2.21]
	If $\mathcal{Y} \subseteq \mathcal{X}$ is an irreducible closed subscheme, let
		\begin{flalign*}
		\quad \quad \quad &
		\operatorname{Res}_{n,\mathcal{Y}}: H^0(\mathcal{X},n\mathcal{L}) \longrightarrow H^0(\mathcal{Y},n\mathcal{L}).
		&&
	\end{flalign*}
	be the restriction map.
	
	There exists $\eta>0$ such that for any $s_n \in H^0(\mathcal{X},n\mathcal{L})$ and any irreducible closed subscheme $\mathcal{Y} \subsetneq \mathcal{X}$ of dimension $d>0$, we have
	\begin{flalign*}
		\quad \quad \quad &
		\frac{\# \Big(  B(s_n,1) \cap \ker(\operatorname{Res}_{n,\mathcal{Y}} ) \Big)}{\# \Big( B(s_n,1) \cap H^0(\mathcal{X},n\mathcal{L}) \Big) } = O(e^{-\eta n^d}).
		&&
	\end{flalign*} where the implied constant depend only on $\mathcal{X}$ and $\overline{\mathcal{L}}$ but not on $\mathcal{Y}$ nor $s_n$.
	\label{Charles Counting}
\end{thm}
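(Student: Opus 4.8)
The plan is to reduce the statement to the case of balls centered at $0$, which is exactly \cite{charles_arithmetic_2021}, Theorem 2.21. The key observation is that the kernel $\ker(\operatorname{Res}_{n,\mathcal{Y}})$ is a sublattice (in fact a subgroup) of $H^0(\mathcal{X},n\mathcal{L})$, and translation by a lattice point $s_n$ is a bijection $H^0(\mathcal{X},n\mathcal{L}) \to H^0(\mathcal{X},n\mathcal{L})$ that is an isometry of the ambient normed space $H^0(\mathcal{X},n\mathcal{L})_\mathbb{R}$. The subtlety is that translation by $s_n$ does \emph{not} in general send $\ker(\operatorname{Res}_{n,\mathcal{Y}})$ to itself, so one cannot directly say ``translate everything back to $0$''. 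Instead I would argue as follows.

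First, since $-s_n$ translates $B(s_n,1)$ isometrically onto $B(0,1)$ and restricts to a bijection on $H^0(\mathcal{X},n\mathcal{L})$, the denominator $\#\big(B(s_n,1)\cap H^0(\mathcal{X},n\mathcal{L})\big)$ equals $\#\big(B(0,1)\cap H^0(\mathcal{X},n\mathcal{L})\big)$, independent of $s_n$. For the numerator, write $t_n = \operatorname{Res}_{n,\mathcal{Y}}(s_n) \in H^0(\mathcal{Y},n\mathcal{L})$; then $B(s_n,1)\cap \ker(\operatorname{Res}_{n,\mathcal{Y}})$ maps, under translation by $-s_n$, bijectively onto $B(0,1)\cap \big(\operatorname{Res}_{n,\mathcal{Y}}^{-1}(-t_n)\big)$, i.e. onto the lattice points in the unit ball lying in a fixed \emph{coset} of $\ker(\operatorname{Res}_{n,\mathcal{Y}})$. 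So the statement to prove becomes: the number of lattice points of $H^0(\mathcal{X},n\mathcal{L})$ in $B(0,1)$ lying in any single coset of $\ker(\operatorname{Res}_{n,\mathcal{Y}})$ is $O(e^{-\eta n^d})$ times $\#\big(B(0,1)\cap H^0(\mathcal{X},n\mathcal{L})\big)$, uniformly over the coset. Charles' original theorem is precisely this statement for the trivial coset (the kernel itself).

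The second step is to upgrade from the trivial coset to an arbitrary one. Here I expect the main obstacle to lie, though I anticipate it is handled by a standard ``translate the ball'' trick rather than anything deep. The point is that if $v$ is any representative of the coset, then $B(0,1)\cap\big(v+\ker(\operatorname{Res}_{n,\mathcal{Y}})\big)$ is the translate by $v$ of $B(-v,1)\cap \ker(\operatorname{Res}_{n,\mathcal{Y}})$; if $v$ could be chosen with $\|v\|$ small (say $\le 1$, after reducing modulo the kernel using a short vector of $\ker(\operatorname{Res}_{n,\mathcal{Y}})$, which exists because the quotient lattice $H^0(\mathcal{Y},n\mathcal{L})$ has covolume controlled by $\operatorname{vol}(\overline{\mathcal{L}}|_{\mathcal{Y}})$ and bounded successive minima — this is where ampleness and Zhang's estimate enter), then $B(-v,1) \subseteq B(0,2)$ and one bounds the coset count by $\#\big(B(0,2)\cap\ker(\operatorname{Res}_{n,\mathcal{Y}})\big)$, which by Charles' theorem applied with radius $2$ in place of $1$ (his proof is insensitive to the radius up to the implied constant) is again $O(e^{-\eta' n^d})\cdot\#\big(B(0,1)\cap H^0(\mathcal{X},n\mathcal{L})\big)$. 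Combining the two steps, and absorbing the radius change into the constant, gives the claim with a possibly smaller $\eta$; crucially all constants depend only on $\mathcal{X}$ and $\overline{\mathcal{L}}$, since the successive-minima bounds on $\ker(\operatorname{Res}_{n,\mathcal{Y}})$ are uniform in $\mathcal{Y}$ by the same argument Charles uses. In fact, since the referenced proof in \cite{charles_arithmetic_2021} already works coset-by-coset (it proceeds by bounding the number of lattice points in a ball meeting a lower-rank sublattice, and such a bound is automatically uniform over translates), the cleanest write-up is simply to note that Charles' argument applies verbatim to $B(s_n,1)$ once one records that the relevant quantities — the covolume of $\ker(\operatorname{Res}_{n,\mathcal{Y}})$ and its last successive minimum — are unchanged by translating the ball.
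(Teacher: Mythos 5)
Your final observation---that Charles' argument runs essentially verbatim with $B(0,1)$ replaced by $B(s_n,1)$---is exactly the paper's approach: the paper reruns Charles' perturbation argument with the ball centered at $s_n$. The one place translation invariance is actually invoked is in generalizing Charles' Proposition 2.14 (the ``bigger ball'' lemma, Lemma~\ref{Charles bigger ball lemma} in the paper), and there the relevant fact is simply that the number of lattice points in a ball centered at a lattice point is independent of the center. Your phrasing ``the covolume of $\ker(\operatorname{Res}_{n,\mathcal{Y}})$ and its last successive minimum are unchanged by translating the ball'' is off the mark, since neither quantity was ever a function of the ball, but the translation-invariance instinct behind it is the right one.

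The coset detour in your middle paragraphs is a legitimate alternative framing but the details are not quite as you describe, and it buys you nothing. The short-representative step needs neither ampleness nor Zhang's estimate nor any covolume bound: if $(-s_n+\ker)\cap B(0,1)=\emptyset$ the count is $0$, and otherwise just take any $u$ in that intersection, so $\|u\|\le 1$ for free. The real catch is the next step: you then need $\#\bigl(B(0,2)\cap\ker\bigr)\le O(e^{-\eta n^d})\cdot\#\bigl(B(0,1)\cap H^0(\mathcal{X},n\mathcal{L})\bigr)$, which is not a consequence of Charles' Theorem 2.21 as stated, since there both balls have the same radius. A covering argument cannot repair this, because the ambient rank grows like $n^{\dim\mathcal X}$ and can dwarf $n^d$ when $\dim\mathcal Y$ is small. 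So you would be forced to rerun Charles' proof with radius $2$ anyway, at which point the coset reduction has gained nothing over directly centering the ball at $s_n$, which is what the paper does.
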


\begin{remark}
	We write a detailed proof here for the readers' convenience. But this tiny generalization is really not substantial: we are just following Charles' proof step-by-step and changing the ball from $B(0,1)$ to $B(s_n,1)$. 
\end{remark}

\begin{proof}
	\underline{Horizontal case}. Assume $\mathcal{Y}$ is horizontal. Let $H_n$ be the kernel of the restriction map
	\begin{flalign*}
		\quad \quad \quad &
		\operatorname{Res}_{n,\mathcal{Y}}: H^0(\mathcal{X},n\mathcal{L}) \longrightarrow H^0(\mathcal{Y},n\mathcal{L})
		&&
	\end{flalign*} and let $k_n$ be the  corank of $H_n$. By Charles \cite{charles_arithmetic_2021}, Proposition 2.20 applied to $\mathcal{X}_\mathbb{Q}$ and $\mathcal{Y}_\mathbb{Q}$, there exists a positive constant $C$ independent of $\mathcal{Y}$, such that if $n$ is larger than some integer $N$, independent of $\mathcal{Y}$, then $k_n \geq Cn^{d-1}$.
	
	Up to enlarging $N$, by Zhang's Nakai-Moishezon when $n \geq N$, there exists a basis $H^0(\mathcal{X},n\mathcal{L})$ with norm at most $e^{-\varepsilon n}$. For $n \geq N$, we can find elements $\sigma_1,\dots,\sigma_{k_n}$ of $\Lambda_n$ that are linearly independent in $\Lambda_n/H_n$ and satisfy $\|\sigma_i\|_\infty \leq e^{-\varepsilon n}$ for $i=1,\dots,k_n$.
	
	Let $\eta$ be a positive number smaller than $\varepsilon$. Then for any $\sigma \in H_n \in B(s_n,1)$ and any integers $\lambda_1,\dots,\lambda_{k_n}$ with $|\lambda_i| \leq e^{\eta n}$, we have
	\begin{flalign*}
		\quad \quad \quad &
		\|\sigma +\sum_i \lambda_i \sigma_i - s_n \|_\infty \leq 1+k_n e^{-(\varepsilon-\eta)n}
		&&
	\end{flalign*}
	and, when $\sigma$ runs through $H_n \cap B(s_n,1)$ and $\lambda_i$ run through integers with $|\lambda_i| \leq e^{\eta n}$, the sections $\sigma +\sum_i \lambda_i \sigma_i$ are pairwisely distinct.
	
	As a result we have
	\begin{flalign*}
		\quad \quad \quad &
		e^{nk_n\eta} \# \Big( H_n \cap B(s_n,1) \Big) \leq \#\Big( H^0(\mathcal{X},n\mathcal{L}) \cap B(s_n,1+k_ne^{-(\varepsilon-\eta)n}) \Big).
		&&
	\end{flalign*} Note that $k_n$ is bounded by the rank of $H^0(\mathcal{X},n\mathcal{L})$, which is a polynomial, and apply \ref{Charles bigger ball lemma}, we get
	\begin{flalign*}
		\quad \quad \quad &
		\frac{\# \Big(  B(s_n,1) \cap \ker(\operatorname{Res}_{n,\mathcal{Y}} ) \Big)}{\# \Big( B(s_n,1) \cap H^0(\mathcal{X},n\mathcal{L}) \Big) } = O(e^{-nk_n\eta})
		&&
	\end{flalign*}
	$k_n \geq Cn^{d-1}$ so we get
	\begin{flalign*}
		\quad \quad \quad &
		\frac{\# \Big(  B(s_n,1) \cap \ker(\operatorname{Res}_{n,\mathcal{Y}} ) \Big)}{\# \Big( B(s_n,1) \cap H^0(\mathcal{X},n\mathcal{L}) \Big) } = O(e^{-\eta n^d})
		&&
	\end{flalign*} and we see from the above argument that the implied constants do not depend on $s_n$ nor $\mathcal{Y}$.

	\underline{Vertical case}.	Assume $\mathcal{Y}$ is vertical. By Charles \cite{charles_arithmetic_2021}, Proposition 2.20, there exists $C$ s.t. when $n$ is large (uniformly on $p$ and $\mathcal{Y} \subseteq \mathcal{X}_p$), the kernel $H_n$ of the restriction map
	\begin{flalign*}
		\quad \quad \quad &
		H^0(\mathcal{X},n\mathcal{L}) \longrightarrow H^0(\mathcal{Y},n\mathcal{L})
		&&
	\end{flalign*} has corank $k_n\geq Cn^d$.
	
	Let $\sigma \in H_n \cap B(s_n,1)$, and if $\lambda_1,\dots,\lambda_{k_n}$ be integers taken in $\big\{ 0,1,\dots,p \big\}$, then $\sigma + \sum \lambda_i \sigma_i$ are pairwisely distinct and $\sigma + \sum \lambda_i \sigma_i \in H_n$ $\Longleftrightarrow$ $\lambda_i$ are all zero. Consider only $\lambda_i=0$ or $1$, we have
	\begin{flalign*}
		\quad \quad \quad &
		2^{k_n} \cdot  \#\Big( B(s_n,1) \cap \ker (\operatorname{Res}_{n,\mathcal{Y}} ) \Big) \leq  \#\Big( B(s_n,1+k_n e^{-\varepsilon n}) \cap H^0(\mathcal{X},n\mathcal{L}) \Big).
		&&
	\end{flalign*}
	Apply \ref{Charles bigger ball lemma} and use $k_n \geq Cn^d$ we get 
	\begin{flalign*}
		\quad \quad \quad &
		\frac{\# \Big(  B(s_n,1) \cap \ker(\operatorname{Res}_{n,\mathcal{Y}} ) \Big)}{\# \Big( B(s_n,1) \cap H^0(\mathcal{X},n\mathcal{L}) \Big) } = O(e^{-\eta n^d}).
		&&
	\end{flalign*} and we see from the above argument that the implied constants do not depend on $s_n$ nor $\mathcal{Y}$.
\end{proof}

\begin{thm}[Charles \cite{charles_arithmetic_2021} Corollary 2.19]
	Let $\mathcal{Y}$ be a vertical closed subscheme and $s_n \in H^0(\mathcal{X},n\mathcal{L})$, then we have restriction map
	\begin{flalign*}
		\quad \quad \quad &
		\varphi_{s_n,\mathcal{Y}}: B(s_n,1) \cap H^0(\mathcal{X},n\mathcal{L}) \longrightarrow H^0(\mathcal{Y},n\mathcal{L}).
		&&
	\end{flalign*}
	Let $E_n \subseteq H^0(\mathcal{Y},n\mathcal{L})$ be subsets.
	If the proportion of $E_n $ in $ H^0(\mathcal{Y},n\mathcal{L})$ tends to $\rho$, then the proportion of $\varphi_{s_n,\mathcal{Y}}^{-1}(E_n) $ in $ B(s_n,1) \cap H^0(\mathcal{X},n\mathcal{L})$ also tends to $\rho$.
	\label{Charles Comparison}
\end{thm}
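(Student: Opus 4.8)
The plan is to follow the proof of \cite{charles_arithmetic_2021}, Corollary~2.19 essentially verbatim, the only change being that the relevant balls are centred at the given lattice point $s_n$ rather than at the origin; exactly as in the proof of Theorem~\ref{Charles Counting}, one checks that every estimate is insensitive to the centre. Write $\Lambda_n = H^0(\mathcal{X},n\mathcal{L})$, set $H_n = \ker(\operatorname{Res}_{n,\mathcal{Y}})$ and $G_n = \operatorname{im}(\operatorname{Res}_{n,\mathcal{Y}}) \subseteq H^0(\mathcal{Y},n\mathcal{L})$, and let $r_n = \operatorname{rank}\Lambda_n$, which is polynomial in $n$. Since $\mathcal{Y}$ is vertical it is supported over finitely many primes, so $N\cdot\mathcal{O}_{\mathcal{Y}}=0$ for some integer $N\geq 1$, whence $N\Lambda_n\subseteq H_n$. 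Combining this with Theorem~\ref{Zhang Nakai-Moishezon} (which, for $n$ large, gives a basis of $\Lambda_n$ of sup-norm $\leq k^n$) has two consequences. First, every class of $\Lambda_n/H_n$ — equivalently every $t\in G_n$ — has a representative $\sigma_t\in\Lambda_n$ that is an $\{0,\dots,N-1\}$-combination of such a basis, so that $\|\sigma_t\|_\infty\leq r_nNk^n\leq (k')^n$ for $n$ large, where $k<k'<1$ is a fixed constant. Second, every successive minimum of $H_n$ is $\leq Nk^n$, so $H_n$ contains $r_n$ linearly independent vectors of norm $\leq Nk^n$; in other words $H_n$ is ``exponentially fine'' at scale $1$. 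Finally, since $\mathcal{Y}$ is fixed, Serre vanishing for the ideal sheaf $\mathcal{I}_{\mathcal{Y}}$ gives $H^1(\mathcal{X},n\mathcal{L}\otimes\mathcal{I}_{\mathcal{Y}})=0$ for $n$ large, so $\operatorname{Res}_{n,\mathcal{Y}}$ is surjective and $G_n=H^0(\mathcal{Y},n\mathcal{L})$; from now on the hypothesis reads $\#E_n/\#G_n\to\rho$.

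The heart of the matter is the equidistribution estimate
\begin{flalign*}
 \quad\quad\quad & \#\varphi_{s_n,\mathcal{Y}}^{-1}(t) \;=\; (1+o(1))\,\frac{\#(B(s_n,1)\cap\Lambda_n)}{\#G_n}\,,\qquad\text{uniformly in }t\in G_n. &&
\end{flalign*}
To prove it, note that the fibre over $t$ is the coset slice $(\sigma_t+H_n)\cap B(s_n,1)$, and that translation by $\sigma_{t'}-\sigma_t$ — a vector of norm $\leq 2(k')^n$ — injects $(\sigma_t+H_n)\cap B(s_n,1-2(k')^n)$ into $(\sigma_{t'}+H_n)\cap B(s_n,1)$. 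Hence it suffices to show that for a single coset the ``shell'' $(\sigma_t+H_n)\cap\big(B(s_n,1)\setminus B(s_n,1-2(k')^n)\big)$ is a vanishing fraction of $\#\varphi_{s_n,\mathcal{Y}}^{-1}(t)$, uniformly. This is an elementary lattice-point count: since $H_n$ is exponentially fine, the number of points of any coset of $H_n$ in a box of radius $R\asymp 1$ equals $\operatorname{vol}(B_R)/\operatorname{covol}(H_n)$ up to a relative error $O(r_n^2Nk^n)$, while the shell has relative volume $O(r_n(k')^n)$, and both error terms are $o(1)$; summing over the $\#G_n$ cosets of $H_n$ recovers $\#(B(s_n,1)\cap\Lambda_n)$, which yields the displayed estimate. (One can equally expand $\#\varphi_{s_n,\mathcal{Y}}^{-1}(t)=\tfrac{1}{\#G_n}\sum_{\chi\in\widehat{G_n}}\overline{\chi}(t)\sum_{\sigma\in B(s_n,1)\cap\Lambda_n}\chi(\operatorname{Res}_{n,\mathcal{Y}}\sigma)$ and observe that for $\chi\neq 1$ the inner sum is an $H_n$-periodic character summed over a box, hence $o(\#(B(s_n,1)\cap\Lambda_n))$ since each full $H_n$-period contributes $0$.) Every implied constant depends only on $\mathcal{X}$, $\overline{\mathcal{L}}$ and $\mathcal{Y}$, and in particular not on the centre $s_n$ — this is the only point at which one must check that the argument survives the recentring, and it does for the same reason as in Theorem~\ref{Charles Counting}.

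Summing the equidistribution estimate over $t\in E_n$ then gives
\begin{flalign*}
 \quad\quad\quad & \frac{\#\varphi_{s_n,\mathcal{Y}}^{-1}(E_n)}{\#(B(s_n,1)\cap\Lambda_n)} \;=\; \frac{\sum_{t\in E_n}\#\varphi_{s_n,\mathcal{Y}}^{-1}(t)}{\#(B(s_n,1)\cap\Lambda_n)} \;=\; (1+o(1))\,\frac{\#E_n}{\#G_n}\;\longrightarrow\;\rho , &&
\end{flalign*}
which is the assertion. I expect the one genuinely technical ingredient to be the uniform thin-shell (equivalently, exponential-sum) estimate above; everything else is bookkeeping, and the passage from balls centred at $0$ to balls centred at $s_n$ costs nothing, because the counts of cosets of the fine lattice $H_n$ inside a box of bounded radius do not, to the precision we need, depend on where the box is located.
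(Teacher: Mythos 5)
Your proof is correct in spirit, but you have taken a much longer road than the paper does, essentially re-deriving Charles' Corollary~2.19 from scratch (Serre vanishing for surjectivity of $\operatorname{Res}_{n,\mathcal{Y}}$, small coset representatives via Zhang's theorem plus $N\Lambda_n\subseteq H_n$, a uniform thin-shell/coset-equidistribution count, and summation over $E_n$). The paper's proof is a one-line reduction: the map $\sigma\mapsto\sigma-s_n$ is a bijection $B(s_n,1)\cap\Lambda_n\to B(0,1)\cap\Lambda_n$ which intertwines $\varphi_{s_n,\mathcal{Y}}$ with $\varphi_{0,\mathcal{Y}}$ up to a shift by $\operatorname{Res}_{n,\mathcal{Y}}(s_n)$ on the target; replacing $E_n$ by $E_n-\operatorname{Res}_{n,\mathcal{Y}}(s_n)$ leaves $\#E_n$ and hence the hypothesis $\#E_n/\#H^0(\mathcal{Y},n\mathcal{L})\to\rho$ unchanged, so the recentred statement is literally equivalent to Charles' original one with $s_n=0$. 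No estimate needs to be re-examined. Your re-derivation does buy something — it makes the equidistribution mechanism explicit and would survive perturbations (non-ball domains, centres not in the lattice) where the clean translation symmetry is absent — but for the statement as given it is unnecessary work, and a couple of its steps are asserted rather than proved: the ``elementary lattice-point count'' giving relative error $O(r_n^2Nk^n)$ in a box of radius $\asymp 1$ needs a genuine boundary-counting lemma (the exponential fineness of $H_n$ gives a reduced basis only up to $\operatorname{poly}(r_n)$ losses, which you should track), and the character-sum alternative you sketch in parentheses needs the same thin-shell input to bound the nontrivial characters, so it is not truly an independent route. If you want to avoid re-proving Charles, notice the translation symmetry; it is the whole point.
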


\begin{proof}
	This follows from Charles \cite{charles_arithmetic_2021} (where $s_n=0$) after a translation by $s_n$ in the left side and a translation by $\operatorname{Res}_{n,\mathcal{Y}}(s_n)$ in the right side.
\end{proof}

\subsection{Avoiding a closed subset}
\begin{prop}
	Let $Y$ be a closed subset of $\mathcal{X}_\mathbb{Q}$ and $s_n \in H^0(\mathcal{X},n\mathcal{L})$. Then the density of elements in $B(s_n,k^n) \cap H^0(\mathcal{X},n\mathcal{L})$ whose divisor do not contain any irreducible components of $Y$ tends to $1$.
\end{prop}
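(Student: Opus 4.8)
The plan is to reduce the statement to a uniform version of Charles's counting theorem (Theorem \ref{Charles Counting}). First I would decompose $Y$ into its irreducible components $Y_1,\dots,Y_m$ inside $\mathcal{X}_\mathbb{Q}$, and let $\overline{Y}_j$ denote the Zariski closure of $Y_j$ in $\mathcal{X}$; each $\overline{Y}_j$ is then an irreducible horizontal closed subscheme of $\mathcal{X}$. A section $s_n \in H^0(\mathcal{X},n\mathcal{L})$ has $\operatorname{div}(s_n)$ containing $Y_j$ (as a subvariety of $\mathcal{X}_\mathbb{Q}$) if and only if $s_n$ restricts to $0$ on $\overline{Y}_j$, i.e. $s_n \in \ker(\operatorname{Res}_{n,\overline{Y}_j})$. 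So the ``bad'' set is $\bigcup_{j=1}^m \big( B(s_n,k^n) \cap \ker(\operatorname{Res}_{n,\overline{Y}_j}) \big)$, and since $m$ is fixed it suffices to show that for each $j$ the density of $B(s_n,k^n) \cap \ker(\operatorname{Res}_{n,\overline{Y}_j})$ in $B(s_n,k^n) \cap H^0(\mathcal{X},n\mathcal{L})$ tends to $0$.

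Next I would handle the rescaling: the balls here have radius $k^n$ rather than $1$, whereas Theorem \ref{Charles Counting} is stated for radius-$1$ balls. Since the lattice $H^0(\mathcal{X},n\mathcal{L})$ is fixed, a ball of radius $k^n$ centered at a lattice point contains far fewer lattice points, so one cannot simply rescale; instead I would invoke the fact (already used implicitly via Lemma \ref{Charles bigger ball lemma} with $\alpha$ chosen so that $k^n > e^{-n^\alpha}$, which holds for $\alpha$ close to $1$) that the counting estimates of Charles are robust to shrinking the radius down to $e^{-n^\alpha}$. Concretely, rerun the argument in the proof of Theorem \ref{Charles Counting} (horizontal case, since $\overline{Y}_j$ is horizontal of dimension $d = \dim \overline{Y}_j \geq 1$): one finds $k_n \geq C n^{d-1}$ linearly independent lattice vectors $\sigma_i$ with $\|\sigma_i\|_\infty \leq e^{-\varepsilon n}$ lying in the complement of $H_n$, and for $\sigma \in H_n \cap B(s_n,k^n)$ and $|\lambda_i| \leq e^{\eta n}$ with $\eta < \varepsilon$ one gets $\sigma + \sum \lambda_i \sigma_i \in B(s_n, k^n + k_n e^{-(\varepsilon-\eta)n})$, pairwise distinct. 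Since $k^n > e^{-n^\alpha}$ for $\alpha$ sufficiently close to $1$, Lemma \ref{Charles bigger ball lemma} applies to the enlargement $k^n \mapsto k^n + k_n e^{-(\varepsilon-\eta)n}$ (note $k_n e^{-(\varepsilon - \eta) n}$ is of the form $C' e^{-\eta' n}$), yielding
\begin{flalign*}
	\quad \quad \quad &
	\frac{\# \big( B(s_n,k^n) \cap \ker(\operatorname{Res}_{n,\overline{Y}_j}) \big)}{\# \big( B(s_n,k^n) \cap H^0(\mathcal{X},n\mathcal{L}) \big)} = O\big(e^{-n k_n \eta}\big) = O\big(e^{-\eta n^d}\big),
	&&
\end{flalign*}
which tends to $0$. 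Summing over the finitely many $j$ and taking complements gives the proposition.

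The main obstacle is the radius issue: one must make sure the radius $k^n$ is still large enough that Charles's lattice-point counting lemma (Lemma \ref{Charles bigger ball lemma}) applies, i.e. that $k^n > e^{-n^\alpha}$ for some admissible $\alpha \in (0,1)$ — this is true since $\log(1/k) \cdot n$ grows only linearly, so any $\alpha$ with $\alpha < 1$ close enough to $1$ would in fact fail, but $\alpha$ merely needs to satisfy $n^\alpha > -n\log k$ for large $n$, which is impossible for $\alpha < 1$; the correct reading is that $k^n = e^{-n\log(1/k)} < e^{-n^\alpha}$ fails only for $\alpha \geq 1$, so for any $\alpha \in (0,1)$ we have $k^n > e^{-n^\alpha}$ eventually, and the lemma does apply. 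Once this is pinned down, everything else is a routine transcription of Charles's horizontal counting argument with the ball recentered at $s_n$ and re-radiused to $k^n$.
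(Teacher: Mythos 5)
There is a genuine gap, and you essentially found it yourself in your final paragraph before talking yourself out of it. The condition $R_n > e^{-n^\alpha}$ in Lemma~\ref{Charles bigger ball lemma} \emph{fails} for $R_n = k^n$ and any $\alpha \in (0,1)$: since $0<k<1$, we have $k^n = e^{-n\log(1/k)}$ with $\log(1/k)>0$ constant, and $n\log(1/k) > n^\alpha$ for all large $n$ because $n^\alpha = o(n)$; hence $k^n < e^{-n^\alpha}$ eventually. Your final sentence asserting the opposite is an algebra slip — the earlier parenthetical remark ``which is impossible for $\alpha<1$'' was the correct one. So re-running Charles' horizontal counting argument verbatim on the ball $B(s_n,k^n)$ does not work: the radius is exponentially small and below the range in which the lattice-point counting estimates are uniform.

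The paper sidesteps this entirely by a metric rescaling, not by re-running the counting at small radius. One replaces $\overline{\mathcal{L}}$ by $\overline{\mathcal{L}}(\log k) = (\mathcal{L}, \tfrac1k\|\cdot\|)$, which remains ample (this uses that $k$ from Corollary~\ref{2} can be taken close enough to $1$). Under the rescaled metric the norm on $H^0(\mathcal{X},n\mathcal{L})_\mathbb{R}$ is multiplied by $k^{-n}$, so the ball $B(s_n,k^n)$ of the original metric is exactly the unit ball $B(s_n,1)$ of the new metric — and then Theorem~\ref{Charles Counting} applies as stated, with no radius issue. Your decomposition of $Y$ into components, the reduction to kernels of restriction maps along the Zariski closures $\overline{Y}_j$, and the summation over the finitely many $j$ are all fine; the missing idea is that the passage from radius $1$ to radius $k^n$ should be absorbed into the Hermitian structure on $\overline{\mathcal{L}}$ rather than treated as a change of radius in a fixed metric.
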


\begin{proof}
	Consider $\overline{\mathcal{L}}(\log k)=(\mathcal{L},\frac{1}{k}\|\cdot\|)$ which remains ample, we can replace  $\overline{\mathcal{L}}$ by $\overline{\mathcal{L}}(\log k)$ and we need only to show that the density of elements in $B(s_n,1) \cap H^0(\mathcal{X},n\mathcal{L})$ (with respect to the new metric) whose divisor do not contain any irreducible components of $Y$ tends to $1$. This follows immediately from Theorme \ref{Charles Counting}.
\end{proof}

\subsection{Generic smoothness}
Examining Charles' proof in \cite{charles_arithmetic_2021} \textsection 3.2 (where he considered $B(0,1)$ only), it actually works for $B(s_n,k^n)$.

We first state the Bertini smoothness theorem of Poonen \cite{poonen_bertini_2004} in the form we need: see \cite{erman_semiample_2015} for the proof of this version.

\begin{thm}
	Let $X$ be a smooth projective variety over finite field $k$ and $L$ be an ample line bundle on $X$. Then the density of elements in $H^0(X,nL)$ whose divisor are smooth tends to $\zeta_X(1+\dim X)^{-1}$.
	\label{Poonen Bertini Density}
\end{thm}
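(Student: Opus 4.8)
The statement to prove is the Poonen--Bertini density theorem in the quantitative form stated (Theorem~\ref{Poonen Bertini Density}): for a smooth projective variety $X$ over a finite field $k$ with $L$ ample, the density of sections $s \in H^0(X, nL)$ with $\operatorname{div}(s)$ smooth tends to $\zeta_X(1+\dim X)^{-1}$ as $n \to \infty$. Since the excerpt explicitly cites Poonen's Bertini theorem over finite fields \cite{poonen_bertini_2004} and the ample variant of Erman--Wood \cite{erman_semiample_2015}, the plan is not to reprove this from scratch but to explain how the statement follows from the literature and to fix conventions.

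First I would recall the setup: replace $L$ by a high power if necessary so that $X$ embeds in projective space, and interpret a section $s \in H^0(X, nL)$ as a hypersurface section. The ``density'' here is the natural density over $n$, i.e. $\lim_{n\to\infty} \#\{s : \operatorname{div}(s) \text{ smooth}\}/\#H^0(X,nL)$. The key mechanism is Poonen's \emph{closed point sieve}: a divisor $\operatorname{div}(s)$ is smooth over $k$ if and only if for every closed point $P \in X$, the hypersurface $\{s=0\}$ is smooth at $P$ (including the condition $s(P) \neq 0$ being compatible, or more precisely the local smoothness condition at $P$). One then stratifies by the degree of $P$: for points of low degree (bounded, say $\le r$) one computes exactly the proportion of $s$ that are smooth at all such $P$ via a Chinese Remainder / independence argument, and this proportion converges, as $r \to \infty$ and then $n \to \infty$, to the Euler product $\prod_{P} (1 - \#\kappa(P)^{-1-\dim X})$, which by definition is $\zeta_X(1+\dim X)^{-1}$. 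For points of medium and high degree one proves a uniform tail estimate showing the contribution of ``bad'' $s$ (singular at some point of degree $> r$) is negligible uniformly in $n$ as $r \to \infty$; this is the technical heart of Poonen's argument and the part that genuinely requires $L$ ample (it uses that one can control singular behavior along a given subvariety by sections vanishing to prescribed order, which needs enough sections, i.e. ampleness and $n$ large).

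The main obstacle, or rather the only real content, is the medium-degree tail estimate, which in Poonen's original paper is done over $\mathbb{P}^N$ and in \cite{erman_semiample_2015} is adapted to the ample (and semiample) setting. I would simply cite these: the low-degree computation gives the main term $\prod_{\deg P \le r}(1 - \#\kappa(P)^{-1-\dim X})$, the high-degree estimate (a crude dimension count) and the medium-degree estimate (the delicate one) together show the error is $o(1)$ uniformly, and letting $r \to \infty$ the main term converges to $\zeta_X(1+\dim X)^{-1}$ by the Euler-product expression of the zeta function. Thus the proof is: invoke \cite{poonen_bertini_2004} for the $\mathbb{P}^N$ case and \cite{erman_semiample_2015} for the passage to arbitrary ample $L$, and identify the resulting Euler product with $\zeta_X(1+\dim X)^{-1}$. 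Since the paper treats this as a black box from which later counting arguments (via Charles' comparison, Theorem~\ref{Charles Comparison}, transferring the density from $\mathcal{X}_p$ back to balls in $H^0(\mathcal{X}, n\mathcal{L})$) are built, no originality is claimed here and the proof reduces to these citations together with the standard observation that smoothness of $\operatorname{div}(s)$ over the finite field is checked pointwise at closed points.
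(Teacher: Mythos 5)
Your proposal is correct and matches the paper's approach: the paper gives no proof at all, simply citing \cite{poonen_bertini_2004} and pointing to \cite{erman_semiample_2015} for the ample-line-bundle version, which is exactly what you do (your sketch of the closed-point sieve is accurate but is additional exposition rather than a different route). The only caveat is that the ``density'' here is taken over $n \to \infty$ (proportion of $s \in H^0(X,nL)$ as $n$ grows), not a density within a fixed $H^0(X,nL)$; your wording gets this right but it is worth being explicit since it is the form Charles' comparison theorem needs.
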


\begin{prop}
	Let $s_n \in H^0(\mathcal{X},n\mathcal{L})$. The density of elements in $B(s_n,k^n) \cap H^0(\mathcal{X},n\mathcal{L})$ whose divisors are generically smooth tends to $1$.
\end{prop}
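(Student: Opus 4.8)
The target statement says: for fixed $s_n \in H^0(\mathcal{X},n\mathcal{L})$, the density of sections in $B(s_n,k^n)\cap H^0(\mathcal{X},n\mathcal{L})$ whose divisor is generically smooth (i.e.\ $\operatorname{div}(s)_\mathbb{Q}$ is smooth over $\mathbb{Q}$) tends to $1$. The natural approach is to reduce generic smoothness over $\mathbb{Q}$ to smoothness modulo $p$ for a suitable finite set of primes, then glue local densities exactly as Charles does in \cite{charles_arithmetic_2021}, \S 3.2 --- the only novelty being that the ball is $B(s_n,k^n)$ rather than $B(0,1)$.

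First I would rescale the metric: replacing $\overline{\mathcal{L}}$ by $\overline{\mathcal{L}}(\log k) = (\mathcal{L},\tfrac1k\|\cdot\|)$ (still ample, as already noted in the previous proof) turns $B(s_n,k^n)$ into the unit ball $B(s_n,1)$ with respect to the new norm. So it suffices to prove the density-$1$ statement for $B(s_n,1)\cap H^0(\mathcal{X},n\mathcal{L})$. Next, the key geometric input: since $\mathcal{X}$ is generically smooth, there is a dense open $U\subseteq\operatorname{Spec}\mathbb{Z}$ over which $\mathcal{X}_U\to U$ is smooth; for $\operatorname{div}(s)_\mathbb{Q}$ to be smooth over $\mathbb{Q}$ it is enough that $\operatorname{div}(s)_{\mathbb{F}_p}$ be smooth over $\mathbb{F}_p$ for a single prime $p\in U$ of good reduction where also $\operatorname{div}(s)$ meets the fiber $\mathcal{X}_{\mathbb{F}_p}$ properly --- a spreading-out / generic flatness argument shows that smoothness of the special fiber at one such $p$ forces smoothness of the generic fiber. (This is Charles' reduction; I would cite it.) Then for each such prime $p$, apply the Poonen--Bertini density theorem (Theorem \ref{Poonen Bertini Density}) over $\mathbb{F}_p$ to conclude that the density of $\bar s\in H^0(\mathcal{X}_{\mathbb{F}_p},n\mathcal{L})$ with $\operatorname{div}(\bar s)$ smooth tends to $\zeta_{\mathcal{X}_{\mathbb{F}_p}}(1+\dim\mathcal{X}_{\mathbb{F}_p})^{-1}>0$. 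To transfer this from $\mathcal{X}_{\mathbb{F}_p}$ back to the ball $B(s_n,1)\cap H^0(\mathcal{X},n\mathcal{L})$, I would invoke Theorem \ref{Charles Comparison} (the generalized Charles comparison, with center $s_n$): the restriction $\varphi_{s_n,\mathcal{X}_{\mathbb{F}_p}}\colon B(s_n,1)\cap H^0(\mathcal{X},n\mathcal{L})\to H^0(\mathcal{X}_{\mathbb{F}_p},n\mathcal{L})$ is equidistributed, so the preimage of the "smooth divisor" locus has the same limiting density. This gives that a positive-proportion subset of $B(s_n,1)$ consists of sections smooth mod $p$; running over infinitely many primes $p\in U$ and using independence of the reductions at distinct primes (again Charles' argument, via a Borel--Cantelli / sieve estimate), the density of sections that are smooth mod $p$ for \emph{some} $p$ --- hence generically smooth --- tends to $1$.

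The step I expect to be the main obstacle is the sieving over an infinite set of primes: one must ensure the error terms coming from Theorem \ref{Charles Comparison} and from the primes of bad reduction (where one instead needs the ``no vertical component'' type estimate of Theorem \ref{Charles Counting}, with density bound $O(e^{-\eta n^{d}})$) are summable and uniform in $s_n$. Charles handles exactly this in \S 3.2, and the point of Lemma \ref{Charles bigger ball lemma} and Theorems \ref{Charles Counting}, \ref{Charles Comparison} having been stated here with implied constants \emph{independent of the center $s_n$} is precisely that his estimates go through verbatim with $B(0,1)$ replaced by $B(s_n,1)$. So in the write-up I would state that the proof is identical to Charles' after this translation, spell out only the rescaling reduction and the invocation of the center-independent counting lemmas, and refer to \cite{charles_arithmetic_2021}, \S 3.2 for the sieve bookkeeping.
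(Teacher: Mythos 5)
Your rescaling reduction, your use of the Poonen--Bertini density theorem, your invocation of Charles's comparison Theorem~\ref{Charles Comparison} with center $s_n$, and your observation that smoothness of $\operatorname{div}(\sigma)_{\mathbb{F}_p}$ for a single good prime $p$ forces generic smoothness of $\operatorname{div}(\sigma)$ all match the paper's proof. Where you diverge is the final step. You propose to sieve: for each fixed $p$ you get a positive proportion $\zeta_{\mathcal{X}_p}(\dim\mathcal{X})^{-1}$ of sections smooth mod $p$, and you then want to show that sections smooth mod \emph{some} $p$ have density $\to 1$ by a Borel--Cantelli / independence-of-primes argument. The paper avoids the sieve entirely. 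It fixes one prime $p$ at a time and notes that since smooth-mod-$p$ already implies generically smooth, the density $\delta_n$ of generically smooth sections satisfies
\[
\liminf_{n\to\infty}\delta_n \;\geq\; \lim_{n\to\infty}\delta_n(p)\;=\;\zeta_{\mathcal{X}_p}(\dim\mathcal{X})^{-1}
\]
for every large $p$. It then lets $p\to\infty$ and quotes Serre's estimate (\cite{serre_zeta_2003}, 1.3) that $\zeta_{\mathcal{X}_p}(\dim\mathcal{X})^{-1}\to 1$, forcing $\liminf_n\delta_n\geq 1$. No coupling of different primes, no uniformity in $p$, no summability of error terms is needed --- exactly the bookkeeping you flagged as the ``main obstacle'' is circumvented. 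Your route is not wrong, and it is closer to what Charles actually does in \S3.2 of \cite{charles_arithmetic_2021} for the full irreducibility statement (where a sieve over several primes is genuinely necessary), but for generic smoothness alone the single-prime argument with Serre's zeta bound is both shorter and free of the technical hurdle you were worried about.
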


\begin{proof}
By rescaling, we need only to show that the density of elements in $B(s_n,1) \cap H^0(\mathcal{X},n\mathcal{L})$ whose divisor are generically smooth tends to $1$.

	$\mathcal{X}/\mathbb{Z}$ is generically smooth so when $p$ is large, $\mathcal{X}_p/\mathbb{F}_p$ is smooth. By Theorem \ref{Charles Comparison} and Theorem \ref{Poonen Bertini Density}, the density of elements in $B(s_n,1) \cap H^0(\mathcal{X},n\mathcal{L})$ whose divisors are smooth at $p$ tends to $\zeta_{\mathcal{X}_p}(\dim \mathcal{X} )^{-1}$. Since the smooth locus is open, if $\div(\sigma)_p$ is smooth, then $\div(\sigma)$ is generically smooth. Thus the density of elements in $B(s_n,1) \cap H^0(\mathcal{X},n\mathcal{L})$ whose divisor are generically smooth is greater or equal to $\zeta_{\mathcal{X}_p}(\dim \mathcal{X} )^{-1}$ for any $p$ large. We win because (Serre \cite{serre_zeta_2003}, 1.3)
	\begin{flalign*}
		\quad \quad \quad &
		\lim_{p \longrightarrow \infty} \zeta_{\mathcal{X}_p}(\dim \mathcal{X} )^{-1} = 1.
		&&
	\end{flalign*}
\end{proof}

\subsection{No vertical components}
We follow the idea of Charles \cite{charles_arithmetic_2021} with a minor improvement in the relative dimension 1 case.

\begin{prop}
	Assume $\mathcal{X}/\mathbb{Z}$ is of relative dimension $d \geq 2$. Let $s_n \in H^0(\mathcal{X},n\mathcal{L})$ be such that $\|s_n\|_\infty \longrightarrow 1$. Then the density of elements in $B(s_n,k^n) \cap H^0(\mathcal{X},n\mathcal{L})$ whose divisor have no vertical components tends to $1$.
\end{prop}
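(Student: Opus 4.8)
The plan is to count, for each prime $p$, the fraction of sections in $B(s_n, k^n) \cap H^0(\mathcal{X}, n\mathcal{L})$ whose divisor contains the whole fiber $\mathcal{X}_p$, and to show this fraction goes to zero fast enough to be summed over all primes that are not already excluded by the archimedean size constraint. Observe first that since $\|s_n\|_\infty \longrightarrow 1$, if $n$ is large and $p > 2$, then any section $t \in B(s_n, k^n) \cap H^0(\mathcal{X}, n\mathcal{L})$ has $\|t\|_\infty \leq 1 + k^n < p$, so $\operatorname{div}(t)$ containing $\mathcal{X}_p$ would mean $t$ is divisible by $p$ in $H^0(\mathcal{X}, n\mathcal{L})$; but then $\|t\|_\infty \geq p$ unless $t = 0$, a contradiction. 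Hence for $p$ larger than some absolute bound $p_0$ (depending only on the eventual size of $\|s_n\|_\infty$), no section in the ball has $\mathcal{X}_p$ in its divisor at all. This reduces the problem to finitely many primes $p \leq p_0$.

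For the remaining finitely many primes $p \leq p_0$, I would apply Theorem \ref{Charles Counting} with $\mathcal{Y} = \mathcal{X}_p$, which is a vertical closed subscheme of dimension $d$ (here I use $d \geq 2$ only to know $d > 0$, though in fact $d \geq 1$ suffices for this particular step — the relative dimension 1 improvement alluded to is elsewhere). Rescaling $\overline{\mathcal{L}}$ to $\overline{\mathcal{L}}(\log k)$ as in the preceding propositions, the ball $B(s_n, k^n)$ becomes $B(s_n', 1)$ in the new metric, and Theorem \ref{Charles Counting} gives that the proportion of $t$ in $B(s_n', 1) \cap H^0(\mathcal{X}, n\mathcal{L})$ lying in $\ker(\operatorname{Res}_{n, \mathcal{X}_p})$ — i.e. with $\operatorname{div}(t) \supseteq \mathcal{X}_p$ — is $O(e^{-\eta n^d})$, with the implied constant uniform in $p$ and in $s_n$. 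Summing over the finitely many $p \leq p_0$ still gives a bound of the form $O(e^{-\eta n^d})$, which tends to $0$. Therefore the density of sections in $B(s_n, k^n) \cap H^0(\mathcal{X}, n\mathcal{L})$ whose divisor has no vertical component tends to $1$.

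The main obstacle is making the reduction to finitely many primes genuinely uniform: one must check that the bound $p_0$ can be chosen once and for all, independent of $n$, which is exactly why the hypothesis $\|s_n\|_\infty \longrightarrow 1$ (rather than merely $\|s_n\|_\infty$ bounded) is the clean assumption to use — it guarantees $\|t\|_\infty < 3$ for all $t$ in the ball once $n$ is large, so $p_0 = 3$ works. A secondary point requiring a line of care is that a section whose reduction mod $p$ vanishes identically on $\mathcal{X}_p$ is precisely an element of $p \cdot H^0(\mathcal{X}, n\mathcal{L})$ (using flatness of $\mathcal{X}/\mathbb{Z}$ and that $\mathcal{X}_p$ is the full fiber), so that the archimedean and non-archimedean descriptions of ``divisor contains $\mathcal{X}_p$'' agree; this is routine but should be stated. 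Everything else is a direct invocation of Charles' counting theorem as already set up in this section.
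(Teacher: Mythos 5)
Your reduction to finitely many primes is the crux of the proposal, and it rests on a claim that is false. You assert that if $\operatorname{div}(t)$ has a vertical component over $p$ then $\operatorname{div}(t) \supseteq \mathcal{X}_p$, hence $t$ is divisible by $p$, hence $\|t\|_\infty \geq p$. Both implications fail.

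First, a vertical component of $\operatorname{div}(t)$ over $p$ is a \emph{single} irreducible component of $\mathcal{X}_p$ (being an irreducible component of a divisor, it has codimension one, hence dimension $d$, hence is a component of the $d$-dimensional fiber $\mathcal{X}_p$), not the whole fiber. The special fiber $\mathcal{X}_p$ is in general reducible — even for $\mathcal{X}$ normal and integral with smooth generic fiber — so $t$ can vanish identically on one component of $\mathcal{X}_p$ while remaining nonzero on another, and such a $t$ is not in $p \cdot H^0(\mathcal{X}, n\mathcal{L})$. The condition the proposition must exclude is ``$\operatorname{div}(t)$ contains \emph{some} irreducible component of some $\mathcal{X}_p$'', which is strictly weaker than ``$\operatorname{div}(t) \supseteq \mathcal{X}_p$''. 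Second, even if $t = pu$ with $u \in H^0(\mathcal{X}, n\mathcal{L})$ nonzero, you cannot conclude $\|t\|_\infty \geq p$: by Theorem~\ref{Zhang Nakai-Moishezon} the lattice $H^0(\mathcal{X}, n\mathcal{L})$ has a full basis of vectors of sup-norm at most $k^n < 1$, so $\|u\|_\infty$ can be exponentially small and $\|t\|_\infty = p\|u\|_\infty$ need not exceed $1$. Thus $p_0$ cannot be taken independent of $n$, and the sum is over infinitely many primes (growing with $n$), so the argument collapses.

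The paper's actual proof does not reduce to finitely many primes. For large primes $p > e^{nB}$ it fixes a horizontal curve $\mathcal{C}$ meeting every component of every fiber; the Arakelov degree identity on $\mathcal{C}$ shows that any $\sigma$ in the ball whose divisor has a vertical component over $p > e^{nB}$ must contain $\mathcal{C}$, and Theorem~\ref{Charles Counting} applied to $\mathcal{C}$ makes those rare. For primes $p \leq e^{\varepsilon n^2}$ it applies Theorem~\ref{Charles Counting} to each irreducible component $\mathcal{Y} \subseteq \mathcal{X}_p$ and sums the $O(e^{-\eta n^d})$ bounds over the $O(e^{\varepsilon n^2})$ primes (times the uniformly bounded number of components per fiber), getting $O(e^{\varepsilon n^2 - \eta n^d})$. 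This is also where $d \geq 2$ is genuinely used — your parenthetical ``I use $d \geq 2$ only to know $d > 0$'' is a symptom of the missing sum over infinitely many primes: with $d = 1$ the bound $e^{\varepsilon n^2 - \eta n}$ does not decay, which is precisely why the paper gives a separate, more delicate argument for relative dimension $1$. Finally $e^{nB} < e^{\varepsilon n^2}$ for $n$ large closes the gap between the two regimes.
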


\begin{proof}
	By rescaling, we need only to show that assume $\|s_n\|_\infty \leq A^n$ with $A>1$, then the density of elements in $B(s_n,1) \cap H^0(\mathcal{X},n\mathcal{L})$ whose divisor have no vertical components tends to $1$.
	
	\underline{Step I}. Take a horizontal curve $\mathcal{C}$. Then for large enough $p$, $\mathcal C$ meets every irreducible components of $\mathcal{X}_p$. Assume $\sigma \in B(s_n,1) \cap H^0(\mathcal{X},n\mathcal{L})$ is such that $\operatorname{div}(\sigma)$ does not contain $\mathcal{C}$ and has a vertical component over $p$. Then we may use $\sigma$ to compute the Arakelov degree of $\mathcal{C}$ and get
	\begin{flalign*}
		\quad \quad \quad &
		\widehat{\deg} \left( n\overline{\mathcal{L}}|_\mathcal{C} \right) = \operatorname{div}(\sigma) \cdot \mathcal{C} - \sum \log\|\sigma(x_\mathcal{C})\|.
		&&
	\end{flalign*}
	In particular,
	\begin{flalign*}
		\quad \quad \quad &
		\log p \leq n \widehat{\deg} \left(\overline{\mathcal{L}}|_\mathcal{C} \right) + n \deg(x_\mathcal{C}) \log (A+1) = nB
		&&
	\end{flalign*} where $B=\widehat{\deg} \left(\overline{\mathcal{L}}|_\mathcal{C} \right) + \deg(x_\mathcal{C}) \log (A+1)$ depends only on $\mathcal{C}$.
	In other words, if $\operatorname{div}( \sigma)$ has a vertical component over $p> e^{nB}$ then $\operatorname{div}( \sigma)$ must contain $\mathcal{C}$. Use Theorem \ref{Charles Counting} on $\mathcal{C}$ we see the density of elements in $B(s_n,1) \cap H^0(\mathcal{X},n\mathcal{L})$ whose divisor have vertical components above $ p> e^{nB}$ tends to $0$.
	
	\underline{Step II}. Fix a small $\varepsilon>0$. By Theorem \ref{Charles Counting}, the density of elements in $B(s_n,1) \cap H^0(\mathcal{X},n\mathcal{L})$ whose divisor have vertical components above $p \leq e^{\varepsilon n^2}$ is $O(e^{\varepsilon n^2 - \eta n^d})$. In particular it tends to $0$.

	\underline{Step III}. When $n$ is large, $e^{nB} < e^{\varepsilon n^2}$, which means that we have taken care of all $p$. We win.
\end{proof}

\begin{prop}
	Assume $\mathcal{X}/\mathbb{Z}$ is of relative dimension $d = 1$. Let $s_n \in H^0(\mathcal{X},n\mathcal{L})$ be such that $\|s_n\|_\infty \longrightarrow 1$. Then the density of elements in $B(s_n,k^n) \cap H^0(\mathcal{X},n\mathcal{L})$ whose divisor have no vertical components tends to $1$.
\end{prop}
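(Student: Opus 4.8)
The plan is to follow the three‑step pattern of the $d\ge 2$ case — rescale to balls $B(s_n,1)$, use a horizontal curve to confine the relevant primes, and count — but to replace Step II, which is exactly what breaks when $d=1$ (there the estimate reads $O(e^{\varepsilon n^2-\eta n^d})$, useless for $d=1$). After replacing $\overline{\mathcal L}$ by $\overline{\mathcal L}(\log k)$ — still ample if $k$ is chosen close enough to $1$ in Theorem~\ref{Zhang Nakai-Moishezon}, ampleness being an open condition — I may fix, for $n\gg 0$, a $\mathbb Z$‑basis $\sigma_1,\dots,\sigma_{\rho_n}$ of $H^0(\mathcal X,n\mathcal L)$ with $\|\sigma_i\|_\infty\le e^{-\varepsilon n}$, where $\rho_n=\operatorname{rank}_{\mathbb Z}H^0(\mathcal X,n\mathcal L)=\dim_{\mathbb Q}H^0(\mathcal X_{\mathbb Q},n\mathcal L_{\mathbb Q})$. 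The only structural input from relative dimension $1$ is that $\mathcal X_{\mathbb Q}$ is a curve, so $\rho_n=n\deg\mathcal L_{\mathbb Q}+O(1)\to\infty$; everything below uses only that this rank — equivalently, the codimension of the relevant congruence conditions — grows linearly in $n$.

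Outside a finite set $S$ of primes each fibre $\mathcal X_p$ is reduced with a bounded number of irreducible components; for $p\notin S$, ``$\operatorname{div}(\sigma)$ has a vertical component over $p$'' means $\operatorname{div}(\sigma)\ge\mathcal Z$ for some vertical prime divisor $\mathcal Z\subseteq\mathcal X_p$, equivalently $\sigma\in H^0(\mathcal X,n\mathcal L-\mathcal Z)=\ker\operatorname{Res}_{n,\mathcal Z}$, a sublattice whose corank $k_n$ satisfies $k_n\ge Cn$ for a fixed $C>0$ and all $n\gg 0$, uniformly in $p$ and $\mathcal Z$ (Charles, Proposition 2.20); when $\mathcal X_p$ is integral this condition is simply ``$p\mid\sigma$'' and $k_n=\rho_n$. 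For $p\in S$ one bounds the density of $\sigma\in B(s_n,1)$ having a vertical component over $p$ by $\sum_{\mathcal Z}(\text{density of }\ker\operatorname{Res}_{n,\mathcal Z})$, each term $O(e^{-\eta n})$ by Theorem~\ref{Charles Counting}; as $S$ and the number of $\mathcal Z$'s are finite, this tends to $0$. The key new estimate is a congruence count, uniform in $p$, $\mathcal Z$ and $n$: perturbing $\sigma\in B(s_n,1)\cap\ker\operatorname{Res}_{n,\mathcal Z}$ by $\sum_j\lambda_j\tau_j$, where the $\tau_j$ are chosen among the $\sigma_i$ so that their images form a residue‑field basis of $H^0(\mathcal X,n\mathcal L)/\ker\operatorname{Res}_{n,\mathcal Z}$ (there are $m\ge k_n$ of them, counted over $\mathbb F_p$) and $\lambda_j$ runs over $\{0,\dots,p-1\}$, produces $p^{m}$ pairwise distinct lattice sections (the $\tau_j$ lie in a $\mathbb Z$‑basis and $|\lambda_j|<p$) lying in $B(s_n,1+mp\,e^{-\varepsilon n})$; hence $p^{m}\cdot\#(B(s_n,1)\cap\ker\operatorname{Res}_{n,\mathcal Z})\le\#(B(s_n,1+mp\,e^{-\varepsilon n})\cap H^0)$. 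By translation invariance ($s_n$ is a lattice point) the right side is $\#(B(0,1+mp\,e^{-\varepsilon n})\cap H^0)$, which is $(1+o(1))\#(B(0,1)\cap H^0)$ by Lemma~\ref{Charles bigger ball lemma} when $mp\,e^{-\varepsilon n}\le Ce^{-\eta n}$, and $\le(7(1+mp\,e^{-\varepsilon n}))^{\rho_n}\#(B(0,1)\cap H^0)$ in general (an elementary covering bound). So the density of $\{\sigma\in B(s_n,1):\operatorname{div}(\sigma)\ge\mathcal Z\}$ is $\le p^{-k_n}(1+o(1))$ for $p$ up to about $e^{(\varepsilon-\eta)n}$, and $\le p^{-k_n}(7+7k_n p\,e^{-\varepsilon n})^{\rho_n}$ for all $p$.

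To assemble, fix a horizontal curve $\mathcal C$; computing $\widehat{\deg}(n\overline{\mathcal L}|_{\mathcal C})$ with a section $\sigma$ shows that either $\mathcal C\subseteq\operatorname{div}(\sigma)$ — a set of density $O(e^{-\eta n})$ by Theorem~\ref{Charles Counting} applied to $\mathcal C$ — or else $\sigma|_{\mathcal C}\ne 0$ vanishes at the characteristic‑$p$ points $\mathcal C\cap\mathcal X_p$ whenever $\operatorname{div}(\sigma)$ has a vertical component over $p$, so, $\|\sigma\|_\infty$ being controlled by $\|s_n\|_\infty\to 1$, the relevant primes obey $\log p\le nB'$ for some constant $B'=B'(\mathcal C)$. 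It remains to bound $\sum_{p\le e^{nB'},\,p\notin S}(\text{density of a vertical component over }p)$. For $p$ up to $e^{(\varepsilon-\eta)n}$ this is $\le\sum_{p}p^{-k_n}(1+o(1))\le\sum_{m\ge 2}m^{-k_n}(1+o(1))\to 0$ because $k_n\to\infty$; for $e^{(\varepsilon-\eta)n}< p\le e^{nB'}$ the general bound together with $p^{-k_n}\le e^{-\Theta(n)k_n}=e^{-\Omega(n^2)}$ makes each density $e^{-\Omega(n^2)}$, so the sum over the (at most $e^{nB'}$) primes still tends to $0$. For the integral fibres, where $k_n=\rho_n$, this needs no constraint on $B'$; for the reducible fibres that occur when $K\ne\mathbb Q$ one runs the same count componentwise, the gap between $k_n$ and $\rho_n$ being absorbed by the $(1+1/[K:\mathbb Q])$‑type margin coming from the choice of $\mathcal C$.

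The main obstacle is producing a density bound for ``$\operatorname{div}(\sigma)$ has a vertical component over $p$'' that is simultaneously uniform in $p$ and summable over the exponentially many (up to $e^{nB'}$) primes in play. In relative dimension $d\ge 2$, Charles' per‑prime bound $O(e^{-\eta n^d})$ already survives summation over the $e^{O(n^2)}$ candidates because $n^d$ beats $n^2$; in relative dimension $1$ the analogous bound $O(e^{-\eta n})$ does not, and the ``minor improvement'' is precisely to recognize that a vertical component over $p$ is literally the congruence $\sigma\equiv 0$ along a vertical prime divisor — a condition of codimension $\ge Cn\to\infty$ — and to convert it, via the perturbation together with Lemma~\ref{Charles bigger ball lemma}, into the much stronger bound of the previous paragraph; the horizontal‑curve argument of the $d\ge2$ proof is kept, but only to confine the relevant primes to $p\le e^{O(n)}$.
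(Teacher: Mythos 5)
Your Steps~I (horizontal curve to confine the relevant primes to $p\le e^{nB'}$), the handling of the finite set $S$, and the perturbation count for $p$ up to roughly $e^{(\varepsilon-\eta)n}$ are sound and in the same spirit as the paper. The gap is in the range $e^{(\varepsilon-\eta)n}<p\le e^{nB'}$. The paper's device (its stated ``minor improvement'' over the $d\ge 2$ case) is to perturb by $\lambda_i\in\{0,\dots,q\}$ with $q$ a \emph{fixed} integer chosen once so that $q^{C}>e^{B}$: the perturbation $\sum\lambda_i\sigma_i$ is then always of size $O(nqe^{-\varepsilon n})$, so Lemma~\ref{Charles bigger ball lemma} applies across the whole range and gives a per-prime bound $\lesssim q^{-k_n}\le q^{-Cn}$; multiplying by the $e^{nB}$ primes is still $\to 0$. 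You instead take $\lambda_j\in\{0,\dots,p-1\}$, which is sharper per prime ($p^{-k_n}$) but enlarges the ball radius to $1+k_npe^{-\varepsilon n}$ — exponentially large once $p\gg e^{\varepsilon n}$ — so you fall back on a crude covering bound giving a factor of order $e^{(B'-\varepsilon)n\rho_n}$. The net exponent (ignoring $O(n\log n)$) is $n\bigl((B'-\varepsilon)\rho_n - B'k_n\bigr)$. When $k_n=\rho_n$ — i.e.\ $\mathcal{X}_p$ irreducible, which for $p\notin S$ is the case exactly when $K=\mathbb{Q}$ — this is $-\varepsilon n\rho_n<0$ and your ``$e^{-\Omega(n^2)}$'' claim holds. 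But when $K\neq\mathbb{Q}$ and $p$ splits, each component $\mathcal{Z}\subseteq\mathcal{X}_p$ has $k_n\approx\rho_n/[K:\mathbb{Q}]$, and the exponent becomes $n\rho_n\bigl(B'(1-1/[K:\mathbb{Q}])-\varepsilon\bigr)$, which is \emph{positive} as soon as $B'>\varepsilon[K:\mathbb{Q}]/([K:\mathbb{Q}]-1)$. Nothing forces $B'$ — a quantity built from $\widehat{\deg}(\overline{\mathcal{L}}|_{\mathcal C})$ and $\deg(x_{\mathcal C})$ — to be that small relative to the Zhang--Nakai--Moishezon constant $\varepsilon$, and your closing appeal to a ``$(1+1/[K:\mathbb{Q}])$-type margin coming from the choice of $\mathcal{C}$'' is not an argument; I do not see how to make it one.

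A repair that stays close to your structure: cap the perturbation range at $Q=\min\bigl(p-1,\lfloor e^{(\varepsilon-\eta)n}/k_n\rfloor\bigr)$. The perturbation is then always $\le e^{-\eta n}$, Lemma~\ref{Charles bigger ball lemma} applies uniformly, and the per-component bound $(Q+1)^{-k_n}\le e^{-(\varepsilon-\eta)nk_n+O(n\log n)}=e^{-\Omega(n^2)}$ holds for every prime and every component, irrespective of whether $\mathcal{X}_p$ is irreducible; summing over $\le e^{nB'}$ primes still goes to $0$. But this is essentially the paper's fixed-$q$ device, merely allowed to grow with $n$; the genuinely new idea of using $\lambda_j$ up to $p-1$ does not survive the large-$p$, $K\neq\mathbb{Q}$ regime without it.
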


\begin{proof}
By rescaling, we need only to show that assume $\|s_n\|_\infty \leq A^n$ with $A>1$, then the density of elements in $B(s_n,1) \cap H^0(\mathcal{X},n\mathcal{L})$ whose divisor have no vertical components tends to $1$.

\underline{Step I}. Take a horizontal curve $\mathcal{C}$. Then for large enough $p$, $\mathcal C$ meets every irreducible components of $\mathcal{X}_p$. Assume $\sigma \in B(s_n,1) \cap H^0(\mathcal{X},n\mathcal{L})$ is such that $\operatorname{div}(\sigma)$ does not contain $\mathcal{C}$ and has a vertical component over $p$. Then we may use $\sigma$ to compute the Arakelov degree of $\mathcal{C}$ and get
	\begin{flalign*}
		\quad \quad \quad &
		\widehat{\deg} \left( n\overline{\mathcal{L}}|_\mathcal{C} \right) = \operatorname{div}(\sigma) \cdot \mathcal{C} - \sum \log\|\sigma(x_\mathcal{C})\|.
		&&
	\end{flalign*}
	In particular,
	\begin{flalign*}
		\quad \quad \quad &
		\log p \leq n \widehat{\deg} \left(\overline{\mathcal{L}}|_\mathcal{C} \right) + n \deg(x_\mathcal{C}) \log (A+1) = nB
		&&
	\end{flalign*} where $B=\widehat{\deg} \left(\overline{\mathcal{L}}|_\mathcal{C} \right) + \deg(x_\mathcal{C}) \log (A+1)$ depends only on $\mathcal{C}$.
	In other words, if $\operatorname{div}( \sigma)$ has a vertical component over $p> e^{nB}$ then $\operatorname{div}( \sigma)$ must contain $\mathcal{C}$. Use Theorem \ref{Charles Counting} on $\mathcal{C}$ we see the density of elements in $B(s_n,1) \cap H^0(\mathcal{X},n\mathcal{L})$ whose divisor have vertical components above $ p> e^{nB}$ tends to $0$.
	
\underline{Step II}. Take a positive integer $q$ such that $q^{\deg(\mathcal{L}_\mathbb{Q})}>e^B$. Let $p$ be any prime number such that $q<p<e^{nB}$ and $\mathcal{Y}$ be any irreducible component of $\mathcal{X}_p$.

By Charles \cite{charles_arithmetic_2021}, Proposition 2.20, there exists $C$ such that when $n$ is large (uniformly on $p$ and $\mathcal{Y} \subseteq \mathcal{X}_p$), the kernel $H_n$ of the restriction map
\begin{flalign*}
	\quad \quad \quad &
	H^0(\mathcal{X},n\mathcal{L}) \longrightarrow H^0(\mathcal{Y},n\mathcal{L})
	&&
\end{flalign*} has corank $k_n\geq Cn$.

By Theorem \ref{Zhang Nakai-Moishezon}, when $n$ is large we can find a $\varepsilon>0$ such that there exist sections $\sigma_1,\dots,\sigma_{k_n} \in H^0(\mathcal{X},n\mathcal{L})$ such that $\|\sigma_i\|_\infty \leq e^{-\varepsilon n}$ and the images of $\sigma_1,\dots,\sigma_{k_n}$ in $H^0(\mathcal{Y},n\mathcal{L})$ are linearly independent over $\mathbb{F}_p$.

Let $\sigma \in H_n$, and if $\lambda_1,\dots,\lambda_{k_n}$ be integers taken in $\big\{ 0,1,\dots,q \big\}$, then $\sigma + \sum \lambda_i \sigma_i$ are pairwisely distinct and $\sigma + \sum \lambda_i \sigma_i \in H_n$ if and only if $\lambda_i$ are all zero.

Thus we have
\begin{flalign*}
	\quad \quad \quad &
	q^{r_n} \cdot  \#\Big( B(s_n,1) \cap \ker (\operatorname{Res}_{n,\mathcal{Y}} ) \Big) \leq  \#\Big( B(s_n,1+q Cn e^{-\varepsilon n}) \cap H^0(\mathcal{X},n\mathcal{L}) \Big).
	&&
\end{flalign*}
Thus for any $0<\varepsilon^\prime <\varepsilon$, we have
\begin{flalign*}
	\quad \quad \quad &
	\frac{ \#\Big( B(s_n,1) \cap \ker (\operatorname{Res}_{n,\mathcal{Y}} ) \Big) }{ \#\Big( B(s_n,1) \cap H^0(\mathcal{X},n\mathcal{L}) \Big) } \leq \frac1{q^{Cn}} \cdot \frac{ \#\Big( B(s_n,1+e^{- \varepsilon^\prime n}) \cap H^0(\mathcal{X},n\mathcal{L}) \Big) }{ \#\Big( B(s_n,1) \cap H^0(\mathcal{X},n\mathcal{L}) \Big) }.
	&&
\end{flalign*}
By Lemma \ref{Charles bigger ball lemma}, 
\begin{flalign*}
	\quad \quad \quad &
	\frac{ \#\Big( B(s_n,1+e^{- \varepsilon^\prime n}) \cap H^0(\mathcal{X},n\mathcal{L}) \Big) }{ \#\Big( B(s_n,1) \cap H^0(\mathcal{X},n\mathcal{L}) \Big) } \longrightarrow 1
	&&
\end{flalign*} hence

\begin{flalign*}
	\quad \quad \quad &
	\frac{ \#\Big( B(s_n,1) \cap \ker (\operatorname{Res}_{n,\mathcal{Y}} ) \Big) }{ \#\Big( B(s_n,1) \cap H^0(\mathcal{X},n\mathcal{L}) \Big) } \leq \frac2{q^{Cn}}.
	&&
\end{flalign*}
Take sum over all $q<p<e^{nB}$ and all $\mathcal{Y} \subseteq \mathcal{X}_p$, we see that the density of elements in $B(s_n,1) \cap H^0(\mathcal{X},n\mathcal{L})$ whose divisor have vertical components above $q<p<e^{nB}$ is bounded by $3 r \cdot \frac{e^{n B}}{q^{Cn}}$ where $r$ is the number of irreducible components of $\mathcal{X}_{\overline{\mathbb{Q}}}/\overline{\mathbb{Q}}$. We took $q$ such that $q^{\deg(\mathcal{L}_\mathbb{Q})}>e^B$, so this density tends to $0$. 

\underline{Step III}. Since $q$ depends only on $\mathcal{C}$ and not on $n$, the density of elements in $B(s_n,1) \cap H^0(\mathcal{X},n\mathcal{L})$ whose divisor have vertical components above $p \leq q$ tends to $0$.

\underline{Step IV}. We have taken care of $p \leq q$, $q < p < e^{n B}$ and $p \geq e^{n B}$. We win.
\end{proof}

\subsection{Summary}
The full arithmetic Demailly approximation theorem that we have been cooking is the following:
\begin{thm}
	Let $\mathcal{X}/\mathbb{Z}$ be an arithmetic variety, normal and generically smooth. Let $\overline{\mathcal{L}}$ be an ample Hermitian line bundle. Let $Y$ be a closed subset of $\mathcal{X}_\mathbb{Q}$.
	Then there exist $s_{n_i} \in H^0(\mathcal{X},n_i\mathcal{L})$ such that
	\begin{itemize}
		\item $\|s_{n_i}\|_\infty \longrightarrow 1$ and $\frac1{n_i} \log\|s_{n_i}\| \longrightarrow 0$ in $L^1$-topology.
		\item $\operatorname{div}(s_{n_i})_\mathbb{Q}$ is smooth and does not contain any irreducible components of $Y$.
		\item $\operatorname{div}(s_{n_i})$ has no vertical components.
	\end{itemize}
	\label{full arithmetic Demailly approximation}
\end{thm}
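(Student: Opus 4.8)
The statement is the assembly of the pieces already proved, with the lattice shells $B_n(s_n, k^n) \cap H^0(\mathcal X, n\mathcal L)$ serving as the common thread. The plan is as follows. First I would invoke Theorem \ref{arithmetic Demailly infinite}, which supplies a subsequence $n_i$ and integral sections $s_{n_i} \in H^0(\mathcal X, n_i\mathcal L)$ with $\|s_{n_i}\|_\infty \to 1$ and $\tfrac1{n_i}\log\|s_{n_i}\| \to 0$ in $L^1$-topology; by the way these are produced (Proposition \ref{real sections} followed by Corollary \ref{2}), each $s_{n_i}$ lies within sup-distance $k^{n_i}$ of a real section of sup-norm exactly $1$. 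Applying Lemma \ref{k^n then ok} componentwise on $\mathcal X(\mathbb C)$ then shows that \emph{every} integral section of $B_{n_i}(s_{n_i}, k^{n_i})$ — being within sup-distance $2k^{n_i} \le (k')^{n_i}$ of that reference section for any fixed $k < k' < 1$ and $i$ large — still satisfies $\|\cdot\|_\infty \to 1$ and $\tfrac1{n_i}\log\|\cdot\| \to 0$ in $L^1$. So the problem reduces to locating, for all large $i$, a section in $B_{n_i}(s_{n_i}, k^{n_i}) \cap H^0(\mathcal X, n_i\mathcal L)$ whose divisor is generically smooth, contains no irreducible component of $Y$, and has no vertical component.

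For that step I would intersect the three density-one conditions already isolated earlier in this section: the proposition on avoiding a closed subset (applied with the given $Y$), the proposition on generic smoothness, and the no-vertical-components proposition — the last applicable because $\|s_{n_i}\|_\infty \to 1$, and available in both the relative-dimension-$\ge 2$ and the relative-dimension-$1$ cases. Each of these asserts that the corresponding subset of $B_n(s_n, k^n) \cap H^0(\mathcal X, n\mathcal L)$ has density tending to $1$ as $n \to \infty$, at a rate independent of the centre $s_n$; hence the same holds along our sequence of centres $s_{n_i}$, and a finite intersection of density-one subsets again has density tending to $1$, so is nonempty for all large $i$. Picking $s'_{n_i}$ in this intersection and discarding the finitely many small indices yields the asserted sequence. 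To match the precise phrasing ``$\operatorname{div}(s'_{n_i})_\mathbb Q$ is smooth'', I would note that once $\operatorname{div}(s'_{n_i})$ has no vertical component it is proper and flat over $\Spec\mathbb Z$, so its non-smooth locus has closed image in $\Spec\mathbb Z$ consisting of finitely many closed points and therefore avoiding the generic point; that is, generic smoothness of the $\mathbb Z$-scheme is precisely smoothness of its $\mathbb Q$-fibre.

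I do not expect a genuine obstacle in this theorem itself — the assembly is bookkeeping. The real weight is upstream: the complex Demailly approximation with sup-norm control (Theorem \ref{complex Demailly}), its propagation to real and then integral sections via Zhang's arithmetic Nakai--Moishezon estimate (Theorem \ref{Zhang Nakai-Moishezon}), and the shifted-ball versions of Charles' lattice-point counting (Lemma \ref{Charles bigger ball lemma} and Theorems \ref{Charles Counting}, \ref{Charles Comparison}) that power the three geometric propositions, in particular the ad hoc treatment of vertical components in relative dimension $1$. The one point to stay vigilant about in the assembly is that every constituent statement was deliberately formulated for balls centred at an \emph{arbitrary} lattice point rather than at the origin; this is exactly what licenses using $s_{n_i}$ (and not $0$) as the centre of every ball above, and it is the reason the counting results were re-proved here in shifted form.
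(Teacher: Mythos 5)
Your proposal is correct and matches the paper's intended assembly: Theorem \ref{full arithmetic Demailly approximation} is presented as a summary whose proof is exactly the combination of Theorem \ref{arithmetic Demailly infinite} with the three density-one propositions over the shifted balls $B_{n_i}(s_{n_i}, k^{n_i})$, as the section's opening paragraph announces. Your small refinement of passing the triangle inequality through the real reference section of sup-norm exactly $1$ (so that Lemma \ref{k^n then ok} applies verbatim, since the integral $s_{n_i}$ only have $\|s_{n_i}\|_\infty \to 1$) is a correct reading of the intended argument and tightens a slight looseness in the paper's phrasing.
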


\begin{remark}
	Note that if relative dimension $\geq 2$, $\operatorname{div}(s_{n_i})_\mathbb{Q}$ is smooth and $\operatorname{div}(s_{n_i})$ has no vertical components will imply $\operatorname{div}(s_{n_i})$ is irreducible.
\end{remark}

\section{Application: $\operatorname{ess}(\overline N) \geq 0$ $\Longrightarrow $ $\overline N$ pseudo-effective}
\begin{thm}\label{main}
	Let $\mathcal{X}/\mathbb{Z}$ be an arithmetic variety of relative dimension $d$ and $\mathcal{\overline{N}}$ be a Hermitian line bundle. Then
	\begin{flalign*}
		\quad \quad \quad &
		\operatorname{ess}(\mathcal{\overline{N}}) \leq \inf \frac{\mathcal{ \overline{L}}^d \cdot \pi^*\mathcal{\overline{N}}}{\mathcal{L}_\mathbb{Q}^d},
		&&
	\end{flalign*}
	where the infimum is taken over all 
	\begin{itemize}
		\item $\pi: \mathcal{X}^\prime \longrightarrow \mathcal{X}$ is a birational morphism with $\mathcal{X}^\prime$ normal and generically smooth.
		\item $\mathcal{\overline{L}}$ is an ample Hermitian line bundle on $\mathcal{X}^\prime$.
	\end{itemize}
\end{thm}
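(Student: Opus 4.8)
The plan is to reduce to the case $\mathcal{X}^\prime=\mathcal{X}$ by birational invariance of the essential minimum, and then induct on the relative dimension $d$, cutting down one dimension at a time with a section supplied by Theorem~\ref{full arithmetic Demailly approximation}. Concretely, I would fix one admissible pair $(\pi,\overline{\mathcal{L}})$; since $\pi$ is birational we have $h_{\pi^*\overline{\mathcal{N}}}(x)=h_{\overline{\mathcal{N}}}(\pi(x))$ on closed points and $\operatorname{ess}$ only sees the generic behaviour, so $\operatorname{ess}(\overline{\mathcal{N}})=\operatorname{ess}(\pi^*\overline{\mathcal{N}})$, and after replacing $(\mathcal{X}^\prime,\pi^*\overline{\mathcal{N}})$ by $(\mathcal{X},\overline{\mathcal{N}})$ it is enough to prove: \emph{for $\mathcal{X}$ normal and generically smooth of relative dimension $d$, $\overline{\mathcal{L}}$ ample and $\overline{\mathcal{N}}$ an arbitrary Hermitian line bundle, $\operatorname{ess}(\overline{\mathcal{N}})\le \overline{\mathcal{L}}^{\,d}\cdot\overline{\mathcal{N}}/\mathcal{L}_\mathbb{Q}^{\,d}$.} Unwinding the definition of $\operatorname{ess}$, this is equivalent to: for every proper closed $Y\subsetneq\mathcal{X}_\mathbb{Q}$ and every $\varepsilon>0$ there is a closed point $x\in\mathcal{X}_\mathbb{Q}\setminus Y$ with $h_{\overline{\mathcal{N}}}(x)<\varepsilon+\overline{\mathcal{L}}^{\,d}\cdot\overline{\mathcal{N}}/\mathcal{L}_\mathbb{Q}^{\,d}$. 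I would prove this by induction on $d$.

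For the base case $d=1$, apply Theorem~\ref{full arithmetic Demailly approximation} to $(\mathcal{X},\overline{\mathcal{L}},Y)$: it yields $s_{n_i}\in H^0(\mathcal{X},n_i\mathcal{L})$ with $\tfrac1{n_i}\log\|s_{n_i}\|\to 0$ in $L^1$ such that $\operatorname{div}(s_{n_i})=\sum_j\overline{\{x_{i,j}\}}$ is a sum of horizontal curves (no vertical components, all multiplicities $1$ since $\operatorname{div}(s_{n_i})_\mathbb{Q}$ is reduced) with every $x_{i,j}\notin Y$. Evaluating $\widehat{\deg}$ on each $\overline{\{x_{i,j}\}}$ by means of $s_{n_i}$ and summing gives
\[
n_i\,\overline{\mathcal{L}}\cdot\overline{\mathcal{N}}\;=\;\sum_j\deg(x_{i,j})\,h_{\overline{\mathcal{N}}}(x_{i,j})\;-\;\int_{\mathcal{X}(\mathbb{C})}\log\|s_{n_i}\|\;c_1(\overline{\mathcal{N}}),
\]
and $\sum_j\deg(x_{i,j})=\deg(\operatorname{div}(s_{n_i})_\mathbb{Q})=n_i\mathcal{L}_\mathbb{Q}$. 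Hence some $x_{i,j}$ has $h_{\overline{\mathcal{N}}}(x_{i,j})$ at most the weighted average $\overline{\mathcal{L}}\cdot\overline{\mathcal{N}}/\mathcal{L}_\mathbb{Q}+\tfrac1{n_i\mathcal{L}_\mathbb{Q}}\int_{\mathcal{X}(\mathbb{C})}\log\|s_{n_i}\|\,c_1(\overline{\mathcal{N}})$; the error term is bounded by $\bigl\|\tfrac1{n_i}\log\|s_{n_i}\|\bigr\|_{L^1}$ times the (bounded) density of the smooth top form $c_1(\overline{\mathcal{N}})$, hence tends to $0$. Letting $i\to\infty$ settles $d=1$.

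For the inductive step $d\ge 2$, again take such $s_{n_i}$. By the Remark after Theorem~\ref{full arithmetic Demailly approximation} (in relative dimension $\ge 2$, smooth generic fibre plus no vertical component forces irreducibility) and reducedness of $\operatorname{div}(s_{n_i})_\mathbb{Q}$, the divisor $D_i:=\operatorname{div}(s_{n_i})$ is an integral horizontal arithmetic variety of relative dimension $d-1$, and $D_{i,\mathbb{Q}}$ is smooth, irreducible of dimension $d-1$, so not contained in $Y$. Let $\nu_i\colon\widetilde D_i\to D_i$ be the normalization and $\mu_i\colon\widetilde D_i\to\mathcal{X}$ its composite with the inclusion; since $D_{i,\mathbb{Q}}$ is smooth, $\nu_i$ is an isomorphism over $\mathbb{Q}$, so $\widetilde D_i$ is normal and generically smooth with $\widetilde D_{i,\mathbb{Q}}=D_{i,\mathbb{Q}}$, and $\overline{\mathcal{L}}_i:=\mu_i^*\overline{\mathcal{L}}$ is ample on $\widetilde D_i$ (ampleness of the generic fibre survives restriction to a smooth closed subvariety; $c_1(\overline{\mathcal{L}}_i)=c_1(\overline{\mathcal{L}})|_{\widetilde D_i(\mathbb{C})}$ is positive because over $\mathbb{C}$ the map $\mu_i$ is a closed immersion of complex manifolds, the singularities of $D_i$ and the ramification of $\nu_i$ sitting over finite places; horizontal and vertical positivity transfer through the finite $\mu_i$ by the projection formula). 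With $\overline{\mathcal{N}}_i:=\mu_i^*\overline{\mathcal{N}}$, combining the projection formula ($\deg\nu_i=1$), the arithmetic intersection formula for $s_{n_i}$, and $\mathcal{L}_{i,\mathbb{Q}}^{\,d-1}=n_i\mathcal{L}_\mathbb{Q}^{\,d}$ gives
\[
\frac{\overline{\mathcal{L}}_i^{\,d-1}\cdot\overline{\mathcal{N}}_i}{\mathcal{L}_{i,\mathbb{Q}}^{\,d-1}}\;=\;\frac{\overline{\mathcal{L}}^{\,d}\cdot\overline{\mathcal{N}}}{\mathcal{L}_\mathbb{Q}^{\,d}}\;+\;\frac1{n_i\,\mathcal{L}_\mathbb{Q}^{\,d}}\int_{\mathcal{X}(\mathbb{C})}\log\|s_{n_i}\|\;c_1(\overline{\mathcal{L}})^{d-1}c_1(\overline{\mathcal{N}}),
\]
whose right-hand side tends to $\overline{\mathcal{L}}^{\,d}\cdot\overline{\mathcal{N}}/\mathcal{L}_\mathbb{Q}^{\,d}$ by the same $L^1$-argument ($c_1(\overline{\mathcal{L}})^{d-1}c_1(\overline{\mathcal{N}})$ is a fixed smooth top form on $\mathcal{X}(\mathbb{C})$). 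The induction hypothesis applied to $(\widetilde D_i,\overline{\mathcal{L}}_i,\overline{\mathcal{N}}_i)$ and the proper closed subset $\nu_i^{-1}(D_{i,\mathbb{Q}}\cap Y)$ produces, for any $\varepsilon^\prime>0$, a closed point $x$ of $\widetilde D_{i,\mathbb{Q}}$ off that subset with $h_{\overline{\mathcal{N}}_i}(x)<\varepsilon^\prime+\overline{\mathcal{L}}_i^{\,d-1}\cdot\overline{\mathcal{N}}_i/\mathcal{L}_{i,\mathbb{Q}}^{\,d-1}$, whose image $\bar x\in\mathcal{X}_\mathbb{Q}\setminus Y$ satisfies $h_{\overline{\mathcal{N}}}(\bar x)=h_{\overline{\mathcal{N}}_i}(x)$ (heights of closed points are preserved by pullback along $\mu_i$). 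Taking $i$ large and then $\varepsilon^\prime$ small yields a point of $\mathcal{X}_\mathbb{Q}\setminus Y$ of height $<\varepsilon+\overline{\mathcal{L}}^{\,d}\cdot\overline{\mathcal{N}}/\mathcal{L}_\mathbb{Q}^{\,d}$, closing the induction.

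I expect the only genuinely substantive point to be the vanishing of the archimedean error $\tfrac1{n_i}\int_{\mathcal{X}(\mathbb{C})}\log\|s_{n_i}\|\,c_1(\overline{\mathcal{L}})^{d-1}c_1(\overline{\mathcal{N}})\to 0$: this is exactly the dividend of the $L^1$-estimate built into the arithmetic Demailly approximation theorem, and it is what lets us drop any semipositivity hypothesis on $\overline{\mathcal{N}}$ (earlier methods could only bound $\int\log\|s\|\,c_1(\overline{\mathcal{N}})$ when $c_1(\overline{\mathcal{N}})\ge 0$). Everything else is bookkeeping; the one part worth a little care is verifying that restriction followed by normalization stays inside the class of normal, generically smooth arithmetic varieties carrying an ample Hermitian line bundle, which works precisely because the normalization is an isomorphism over $\mathbb{Q}$ and hence does not disturb the archimedean metric where positivity lives.
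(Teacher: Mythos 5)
Your proposal is correct and its engine is the same as the paper's: iterate Theorem~\ref{full arithmetic Demailly approximation} to cut the relative dimension one at a time, use the resulting horizontal cycle to compute the arithmetic intersection number, and absorb the archimedean error terms via the $L^1$-convergence $\tfrac1n\log\|s_n\|\to 0$. The packaging differs in two places worth noting.

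First, you organize the argument as a clean induction on the relative dimension $d$, taking $i\to\infty$ and then $\varepsilon'\to 0$ at each stage, whereas the paper writes out the full nested expansion of $\overline{\mathcal{L}}^d\cdot\pi^*\overline{\mathcal{N}}$ in all $d$ sections $s_{n_1},\dots,s_{n_1,\dots,n_d}$ simultaneously and then sends $\min\{n_1,\dots,n_d\}\to\infty$. Your inductive formulation is cleaner: the error term at each level involves integrating over $\operatorname{div}(s_{n_1,\dots,n_{j-1}})(\mathbb{C})$, which varies with the earlier indices, so the convergence genuinely has to be handled as iterated limits in the order $n_d$, then $n_{d-1}$, and so on; your version makes this order explicit. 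Second, and more substantively, before reapplying the Demailly approximation you normalize: you replace $D_i=\operatorname{div}(s_{n_i})$ by its normalization $\widetilde D_i$, note that normalization is an isomorphism over $\mathbb{Q}$ because $D_{i,\mathbb{Q}}$ is smooth, and verify that $\mu_i^*\overline{\mathcal{L}}$ remains ample. This is actually filling a small gap: Theorem~\ref{full arithmetic Demailly approximation} is stated for $\mathcal{X}$ normal and generically smooth, and the paper's proof applies it directly to $\operatorname{div}(s_{n_1,\dots,n_{j-1}})$ without checking normality of these divisors (smoothness of the generic fiber gives only $R_1$ on the generic fiber; $S_2$ of a Cartier divisor in a merely normal $\mathcal{X}'$ is not automatic). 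Your normalization step, together with the projection formula and the verification that heights, degrees, and ampleness pass through the finite birational map $\nu_i$, is exactly what is needed to keep the iteration inside the hypotheses of the theorem. So your argument is not just correct but a touch more careful than the paper's at this point.
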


\begin{proof}
	Fix a birational morphism $\pi\colon \mathcal{X}^\prime\longrightarrow \mathcal{X}$ with $\mathcal{X}^\prime$ normal and generically smooth, and an ample Hermitian line bundle $\overline{\overline{L}}$ on $\mathcal{X}^\prime$. 
	Fix a closed subset $Y \subseteq \mathcal{X}^\prime_\mathbb{Q}$. By Theorem \ref{full arithmetic Demailly approximation}, we can find a sequence of integral sections $s_{n_1} \in H^0(\mathcal{X}^\prime,n_1\mathcal{L})$ such that
	\begin{itemize}
		\item $\frac1{n_1} \log\|s_{n_1}\| \longrightarrow 0$ in $L^1$-topology (happens on $\mathcal{X}^\prime(\mathbb{C})$).
		\item $\operatorname{div}(s_{n_1})_\mathbb{Q}$ smooth and does not contain any irreducible components of $Y$.
		\item $\operatorname{div}(s_{n_1})$ is irreducible.
	\end{itemize}
	
	Then the intersection number is
	\begin{flalign*}
		\quad \quad \quad &
		\overline{\mathcal{L}}^d \cdot \pi^* \overline{\mathcal{N}} = \frac1{n_{1}} \cdot \overline{\mathcal{L}}^{d-1} \cdot \pi^*\overline{\mathcal{N}} \cdot \operatorname{div}(s_{n_{1}}) - \frac1{n_{1}} \int_{\mathcal{X}^\prime(\mathbb{C})} \log\|s_{n_{1}}\| c_1(\overline{\mathcal{L}})^{d-1}c_1(\pi^*\overline{\mathcal{N}}).
		&&
	\end{flalign*}
	
	For any $n_1$, we can find a sequence (indexed by $n_2$) of integral sections $s_{n_1,n_2} \in H^0(\operatorname{div}(s_{n_1}) ,n_2\mathcal{L})$ s.t.

	\begin{itemize}
		\item $\frac1{n_2} \log\|s_{n_1,n_2}\| \longrightarrow 0$ in $L^1$-topology (happens on $\operatorname{div}(s_{n_1}) (\mathbb{C})$).
		\item $\operatorname{div}(s_{n_1,n_2})_\mathbb{Q}$ smooth and does not contain any irreduicble components of $Y \cap \operatorname{div}(s_{n_1})$.
		\item $\operatorname{div}(s_{n_1,n_2})$ is irreducible.
	\end{itemize}
	
	Then the intersection number is
	\begin{flalign*}
		\quad \quad \quad 
		\overline{\mathcal{L}}^d \cdot \pi^*\overline{\mathcal{N}} &= \frac1{n_{1}n_{2}} \cdot \overline{\mathcal{L}}^{d-2} \cdot \pi^*\overline{\mathcal{N}} \cdot \operatorname{div}(s_{n_{1},n_{2}}) - \frac1{n_{1}} \int_{\mathcal{X}^\prime(\mathbb{C})} \log\|s_{n_{1}}\| c_1(\overline{\mathcal{L}})^{d-1}c_1(\pi^*\overline{\mathcal{N}}) \\
		&\quad \quad - \frac1{n_{1}n_{2}}\int_{\operatorname{div}(s_{n_{1}})(\mathbb{C})}\log\|s_{n_{1},n_{2}}\| c_1(\overline{\mathcal{L}})^{d-2}c_1(\pi^*\overline{\mathcal{N}}).
		&&
	\end{flalign*}	
		
	We repeat this process as long as relative dimension $\geq 2$, we get, for any fixed $n_1,\dots,n_{d-2}$, a sequence (indexed by $n_{d-1}$) of integral sections $s_{n_1,n_2,\dots,n_{d-1}} \in H^0(\operatorname{div}(s_{n_1,\dots,n_{d-2}}) ,n_{d-1}\mathcal{L})$ s.t.

	\begin{itemize}
		\item $\frac1{n_{d-1}} \log\|s_{n_1,\dots,n_{d-1}}\| \longrightarrow 0$ in $L^1$-topology (happens on $\operatorname{div}(s_{n_1,\dots,n_{d-2}}) (\mathbb{C})$).
		\item $\operatorname{div}(s_{n_1,\dots,n_{d-1}})_\mathbb{Q}$ smooth and does not contain any irreducible components of $Y \cap \operatorname{div}(s_{n_1,\dots,n_{d-2}})$.
		\item $\operatorname{div}(s_{n_1,\dots,n_{d-1}})$ is irreducible.
	\end{itemize}
	
	Now $\operatorname{div}(s_{n_1,\dots,n_{d-1}})/\mathbb{Z}$ has relative dimension $1$ and the intersections number is
	\begin{flalign*}
		\quad \quad \quad 
		\overline{\mathcal{L}}^d \cdot \pi^*\overline{\mathcal{N}} &= \frac1{n_{1}n_{2} \cdots n_{{d-1}}} \cdot \overline{\mathcal{L}} \cdot \pi^*\overline{\mathcal{N}} \cdot \operatorname{div}(s_{n_{1},\dots,n_{{d-1}}}) - \frac1{n_{1}} \int_{\mathcal{X}^\prime(\mathbb{C})} \log\|s_{n_{1}}\| c_1(\overline{\mathcal{L}})^{d-1}c_1(\pi^*\overline{\mathcal{N}}) \\
		&\quad \quad - \frac1{n_{1}n_{2}} \int_{\operatorname{div}(s_{n_{1}})(\mathbb{C})}\log\|s_{n_{1},n_{2}}\| c_1(\overline{\mathcal{L}})^{d-2}c_1(\pi^*\overline{\mathcal{N}}) \\
		&\quad \quad - \cdots\cdots \\
		&\quad \quad - \frac1{n_{1}n_{2} \cdots n_{{d-1}}} \int_{\operatorname{div}(s_{n_{1},\dots,n_{{d-2}}})(\mathbb{C})} \log\|s_{n_{1},\dots,n_{{d-1}}}\| c_1(\overline{\mathcal{L}}) c_1(\pi^*\overline{\mathcal{N}}).
		&&
	\end{flalign*}
	
	For any fixed $n_1,\dots,n_{d-1}$, we can find a sequence (indexed by $n_d$) of integral sections $s_{n_1,n_2,\dots,n_{d}} \in H^0(\operatorname{div}(s_{n_1,\dots,n_{d-1}}) ,n_{d}\mathcal{L})$ s.t. 
	
	\begin{itemize}
		\item $\frac1{n_{d}} \log\|s_{n_1,\dots,n_{d}}\| \longrightarrow 0$ in $L^1$-topology (happens on $\operatorname{div}(s_{n_1,\dots,n_{d-1}}) (\mathbb{C})$).
		\item $\operatorname{div}(s_{n_1,\dots,n_{d}})_\mathbb{Q}$ smooth and does not contain any irreducible components of $Y \cap \operatorname{div}(s_{n_1,\dots,n_{d-1}})$.
		\item $\operatorname{div}(s_{n_1,\dots,n_{d}})$ has no vertical components.
	\end{itemize}
	
	Now the intersection number is
	\begin{flalign*}
		\quad \quad \quad 
		\overline{\mathcal{L}}^d \cdot \pi^*\overline{\mathcal{N}} &= \frac1{n_{ 1}n_{ 2} \cdots n_{ d}} \cdot \pi^*\overline{\mathcal{N}} \cdot \operatorname{div}(s_{n_{ 1},\dots,n_{ d}}) - \frac1{n_{ 1}} \int_{\mathcal{X}^\prime(\mathbb{C})} \log\|s_{n_{ 1}}\| c_1(\overline{\mathcal{L}})^{d-1}c_1(\pi^*\overline{\mathcal{N}}) \\
		&\quad \quad - \frac1{n_{ 1}n_{ 2}} \int_{\operatorname{div}(s_{n_{ 1}})(\mathbb{C})}\log\|s_{n_{ 1},n_{ 2}}\| c_1(\overline{\mathcal{L}})^{d-2}c_1(\pi^*\overline{\mathcal{N}}) \\
		&\quad \quad - \cdots\cdots \\
		&\quad \quad - \frac1{n_{ 1}n_{ 2} \cdots n_{ d}} \int_{\operatorname{div}(s_{n_{ 1},\dots,n_{ {d-1}}})(\mathbb{C})} \log\|s_{n_{ 1},\dots,n_{ d}}\| c_1(\pi^*\overline{\mathcal{N}}).
		&&
	\end{flalign*}

	Say $\operatorname{div}(s_{n_1,\dots,n_d}) = \sum_i \overline{\big\{ x_i \big\}}$, where $x_i$ are closed points of the generic fiber.
	We have
	\begin{flalign*}
		\quad \quad \quad 
		\pi^*\overline{\mathcal{N}} \cdot \operatorname{div}(s_{n_{ 1},\dots,n_{ d}}) &= \sum_i \widehat{\deg}\left( \pi^*\overline{\mathcal{N}}|_{\overline{\{x_i\}}} \right) \\
		&= \sum_i \deg(x_i) h_{\pi^*\overline{\mathcal{N}}}(x_i).
		&&
	\end{flalign*}
	
	Let $x_{{n_1,\dots,n_d},0}$ be the point attaining the smallest $h_{\pi^*\overline{\mathcal{N}}}$-value among the decomposition $\operatorname{div}(s_{n_1,\dots,n_d}) = \sum_i \overline{\big\{ x_i \big\}}$, then
	\begin{flalign*}
		\quad \quad \quad 
		\pi^*\overline{\mathcal{N}} \cdot \operatorname{div}(s_{n_{ 1},\dots,n_{ d}}) &= \sum_i \deg(x_i) h_{\pi^*\overline{\mathcal{N}}}(x_i) \\
		&\geq \sum_i \deg(x_i) h_{\pi^*\overline{\mathcal{N}}}(x_{{n_1,\dots,n_d},0}) \\
		&= n_1\cdots n_d \mathcal{L}_\mathbb{Q}^d h_{\pi^*\overline{\mathcal{N}}}(x_{{n_1,\dots,n_d},0}).
		&&
	\end{flalign*}
	
	Thus, when $\min\{n_1,\dots,n_d\} \longrightarrow \infty$ we have 
	\begin{flalign*}
		\quad \quad \quad &
		\frac{\overline{\mathcal{L}^d} \cdot \pi^*\overline{\mathcal{N}}}{\mathcal{L}_\mathbb{Q}^d} \geq h_{\pi^*\overline{\mathcal{N}}}(x_{{n_1,\dots,n_d},0}) + o(1).
		&&
	\end{flalign*}
	
	We avoided $Y$, so 
	\begin{flalign*}
		\quad \quad \quad &
		\inf_{x \in (\mathcal{X}^\prime_\mathbb{Q}-Y)(\overline{\mathbb{Q}})} h_{\pi^*\overline{\mathcal{N}}}(x) \leq \liminf_{n_1,\dots,n_d \longrightarrow \infty} h_{\pi^*\overline{\mathcal{N}}}(x_{{n_1,\dots,n_d},0}) \leq \frac{\overline{\mathcal{L}^d} \cdot \pi^*\overline{\mathcal{N}}}{\mathcal{L}_\mathbb{Q}^d}.
		&&
	\end{flalign*}
	
	Take supremum among all closed subsets $Y \subsetneq \mathcal{X}^\prime_\mathbb{Q}$ we get
	\begin{flalign*}
		\quad \quad \quad &
		\operatorname{ess}(\overline{\mathcal{N}}) = \operatorname{ess}(\pi^*\overline{\mathcal{N}}) = \sup_Y \inf_{x \in (\mathcal{X}^\prime_\mathbb{Q}-Y)(\overline{\mathbb{Q}})} h_{\pi^*\overline{\mathcal{N}}}(x) \leq \frac{\overline{\mathcal{L}^d} \cdot \pi^*\overline{\mathcal{N}}}{\mathcal{L}_\mathbb{Q}^d}.
		&&
	\end{flalign*}
	
	Take infimum among all
	\begin{itemize}
		\item $\pi: \mathcal{X}^\prime \longrightarrow \mathcal{X}$ is a birational morphism with $\mathcal{X}^\prime$ normal and geometrically smooth.
		\item $\overline{\mathcal{L}}$ is an ample line bundle on $\mathcal{X}^\prime$.
	\end{itemize} We win.
\end{proof}

\begin{cor}
	Let $\mathcal{X}/\mathbb{Z}$ be an arithmetic variety and $\mathcal{\overline{N}}$ be a Hermitian line bundle. Then $\operatorname{ess}(\overline{\mathcal{N}}) \geq 0$ $\Longrightarrow$ $\overline{\mathcal{N}}$ is pseudo-effective.
	\label{pseff111}
\end{cor}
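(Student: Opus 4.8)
The plan is to deduce this formally from Theorem \ref{main} together with Ikoma's arithmetic BDPP theorem (\cite{ikoma_concavity_2015}, Theorem 6.4). Write $d$ for the relative dimension of $\mathcal{X}/\mathbb{Z}$; when $d=0$ the statement is immediate from unwinding the definitions, so assume $d\geq 1$.

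First I would reduce to an intersection-positivity statement via the BDPP criterion: $\overline{\mathcal{N}}$ is pseudo-effective as soon as $\overline{\mathcal{L}}^{d}\cdot\pi^*\overline{\mathcal{N}}\geq 0$ for every birational morphism $\pi\colon\mathcal{X}'\longrightarrow\mathcal{X}$ with $\mathcal{X}'$ normal and generically smooth and every ample Hermitian line bundle $\overline{\mathcal{L}}$ on $\mathcal{X}'$. So I would fix such a pair $(\pi,\overline{\mathcal{L}})$, apply Theorem \ref{main} to it, and combine with the hypothesis $\operatorname{ess}(\overline{\mathcal{N}})\geq 0$ to get
\[
0\leq\operatorname{ess}(\overline{\mathcal{N}})\leq\frac{\overline{\mathcal{L}}^{d}\cdot\pi^*\overline{\mathcal{N}}}{\mathcal{L}_\mathbb{Q}^{d}}.
\]
Since $\mathcal{L}_\mathbb{Q}$ is ample on $\mathcal{X}'_\mathbb{Q}$ one has $\mathcal{L}_\mathbb{Q}^{d}>0$, hence $\overline{\mathcal{L}}^{d}\cdot\pi^*\overline{\mathcal{N}}\geq 0$. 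As $(\pi,\overline{\mathcal{L}})$ was arbitrary, the BDPP criterion is satisfied and $\overline{\mathcal{N}}$ is pseudo-effective.

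I do not expect any genuine obstacle at this stage: the whole difficulty is concentrated upstream, in Theorem \ref{main} and ultimately in the arithmetic Demailly approximation Theorem \ref{full arithmetic Demailly approximation}, and once those are available the corollary is a one-line consequence. The only points I would take care to verify are bookkeeping: that the infimum in Theorem \ref{main} ranges over exactly the same class of data $(\pi,\overline{\mathcal{L}})$ as the universal quantifier in Ikoma's theorem, that each arithmetic intersection number $\overline{\mathcal{L}}^{d}\cdot\pi^*\overline{\mathcal{N}}$ is a finite real number so the inequality chain is not vacuous, and that $\mathcal{L}_\mathbb{Q}^{d}>0$ — all of which are immediate.
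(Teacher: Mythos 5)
Your argument is exactly the paper's proof: combine the inequality from Theorem \ref{main} with $\operatorname{ess}(\overline{\mathcal{N}})\geq 0$ to get $\overline{\mathcal{L}}^{d}\cdot\pi^*\overline{\mathcal{N}}\geq 0$ for every admissible pair $(\pi,\overline{\mathcal{L}})$, and then invoke Ikoma's arithmetic BDPP theorem. The extra bookkeeping you mention (positivity of $\mathcal{L}_{\mathbb{Q}}^{d}$, matching of quantifiers) is correct and is left implicit in the paper.
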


\begin{proof}
	We have $0 \leq \operatorname{ess}(\mathcal{\overline{N}}) \leq \inf \frac{\mathcal{ \overline{L}}^d \cdot \pi^*\mathcal{\overline{N}}}{\mathcal{L}_\mathbb{Q}^d}$. Then $\overline{\mathcal{N}}$ is pseudo-effective by the arithmetic BDPP theorem of Ikoma (\cite{ikoma_concavity_2015}, Theorem 6.4).
\end{proof}

For the passage from Hermitian line bundles to adelic ones, we employ \cite{ballay_successive_2021}, Lemma 4.3.
\begin{lem}
	Let $X/\mathbb{Q}$ be an integral projective variety and $\overline N=\big( N, \big\{\|\cdot\|_p\big\}_{p \leq \infty} \big)$ be an adelic line bundle that is not pseudo-effective. Then there exists a Hermitian model $(\mathcal{X},\overline{\mathcal{N}})$ of $(X,N)$ such that
	\begin{itemize}
		\item $\overline{\mathcal{N}}$ is not pseudo-effective.
		\item $\operatorname{ess}(\overline{\mathcal{N}}) > \operatorname{ess}(\overline{N})$.
	\end{itemize}
	\label{BallayReduction}
\end{lem}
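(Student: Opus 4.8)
\emph{Proof idea.} The plan is to convert non-pseudo-effectivity of $\overline N$ into a robust quantitative statement, to choose a Hermitian model of $\overline N$ whose metrics are uniformly close to those of $\overline N$, to twist it by a tiny positive constant at the archimedean place so as to gain \emph{strictly} on the essential minimum, and then to verify the two assertions.

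First I would unwind the definition. Since $\overline N$ is not pseudo-effective, there are a big adelic line bundle $\overline B$ and an integer $n_0\geq 1$ with $\overline N+\tfrac1{n_0}\overline B$ not big. Replacing $\overline B$ by a large positive multiple (and $n_0$ accordingly), and using Zhang's Nakai--Moishezon theorem (Theorem \ref{Zhang Nakai-Moishezon}) together with standard positivity properties of big line bundles, I may assume $\overline B$ is effective, with a global section $t$ satisfying $\|t\|_p\leq 1$ for every $p$ and $\|t\|_p<1$ for every $p$ in any prescribed finite set of places. Fixing, once and for all, the open set $U\subseteq\Spec\mathbb Z$ and the model over $U$ appearing in the coherence condition for $\overline N$, I arrange $\|t\|_p<1$ for all $p\notin U$ and $\|t\|_\infty<1$. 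Since $(\varepsilon_0-\varepsilon)\overline B$ is then effective for $\varepsilon\leq\varepsilon_0$, monotonicity of $\operatorname{vol}$ in the effective order gives some $\varepsilon_0>0$ with $\overline N+\varepsilon\overline B$ not big for \emph{every} $0<\varepsilon\leq\varepsilon_0$.

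Next, by the theorem that $\overline N$ is a uniform limit of Hermitian models --- and by that theorem's feature that the approximating models agree with $\overline N$ over $U$ --- I fix a Hermitian model $(\mathcal X,\overline{\mathcal M})$ of $(X,N)$ (a priori possibly only $\mathbb Q$-Hermitian, which is harmless here) so close to $\overline N$ that $|h_{\overline{\mathcal M}}(x)-h_{\overline N}(x)|\leq\eta$ for all $x\in X(\overline{\mathbb Q})$ and the per-place metric discrepancies, which occur only on the fixed finite set $(\Spec\mathbb Z\setminus U)\cup\{\infty\}$, are at most $\eta$; here $\eta>0$ is a small parameter to be pinned down. I then pick $\delta>0$ with $0<\eta<\delta$ and $\delta$ small relative to $\varepsilon_0$ and to $-\log\|t\|_\infty$, and set $\overline{\mathcal N}:=\overline{\mathcal M}\otimes\overline{\mathcal O}(\delta)$, where $\overline{\mathcal O}(\delta)$ is $\mathcal O_{\mathcal X}$ with trivial metrics at all finite places and archimedean metric $\|1\|_\infty=e^{-\delta}$. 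This is a Hermitian model of $(X,N)$, and the essential-minimum claim is immediate: $|\operatorname{ess}(\overline{\mathcal M})-\operatorname{ess}(\overline N)|\leq\eta$ while $h_{\overline{\mathcal N}}=h_{\overline{\mathcal M}}+\delta$, so $\operatorname{ess}(\overline{\mathcal N})=\operatorname{ess}(\overline{\mathcal M})+\delta\geq\operatorname{ess}(\overline N)+(\delta-\eta)>\operatorname{ess}(\overline N)$.

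The main point --- and the step I expect to be the real obstacle --- is showing that $\overline{\mathcal N}$ is not pseudo-effective. I would establish the inequality of adelic line bundles $\overline{\mathcal N}\leq\overline N+\tfrac{\varepsilon_0}{2}\overline B$, i.e.\ effectivity of $\tfrac{\varepsilon_0}{2}\overline B+(\overline N-\overline{\mathcal N})$: the metrized trivial bundle $\overline N-\overline{\mathcal N}=\overline N-\overline{\mathcal M}-\overline{\mathcal O}(\delta)$ has defining constant $1$ at finite places of $U$, within $e^{\pm\eta}$ of $1$ at the finitely many finite places outside $U$, and in $[e^{\delta-\eta},e^{\delta+\eta}]$ at $\infty$, so testing effectivity of the sum on $t^{\otimes\varepsilon_0/2}$ reduces to the inequalities $\eta\leq\tfrac{\varepsilon_0}{2}\log p$ at each $p\notin U$ and $e^{\delta+\eta}\|t\|_\infty^{\varepsilon_0/2}\leq 1$ at $\infty$, which hold once $\eta,\delta$ were chosen small enough. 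Granting this, $\overline{\mathcal N}+\tfrac{\varepsilon_0}{2}\overline B\leq\overline N+\varepsilon_0\overline B$ is not big, so $\overline{\mathcal N}+\tfrac1n\overline B$ is not big for any integer $n$ with $\tfrac1n\leq\tfrac{\varepsilon_0}{2}$, whence $\overline{\mathcal N}$ is not pseudo-effective. The delicate point, and the reason $U$ is fixed before the model is chosen, is exactly that the big bundle $\overline B$ must be arranged to be strictly positive at all of the finitely many, model-independent places where the approximation of $\overline N$ is only approximate, so that a small multiple of $\overline B$ absorbs the archimedean twist $\delta$ and the approximation error $\eta$ simultaneously; everything else is routine bookkeeping.
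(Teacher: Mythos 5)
Your proposal follows the same overall strategy as the paper (and as Ballay's Lemma~4.3, which the paper cites): pick a Hermitian model uniformly close to $\overline N$, twist it by a small positive constant at the archimedean place, and check the two conclusions. What you do differently is that where the paper dispatches the non--pseudo-effectivity of the twisted model with the one-line remark that ``being not pseudo-effective is an open condition,'' you unwind the definition and give a concrete witness-based argument: extract a big $\overline B$ and $n_0$ with $\overline N+\tfrac1{n_0}\overline B$ not big, replace $\overline B$ by an effective multiple with a small section $t$ that is \emph{strictly} small at the finitely many places where the model approximation is only approximate, and then use the effectivity chain $\overline{\mathcal N}\le \overline N+\tfrac{\varepsilon_0}{2}\overline B$ together with monotonicity of volume to conclude. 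This is a genuine unpacking of the openness black box, and it has the virtue of making visible exactly why the open set $U$ must be fixed \emph{before} the model is chosen. Three points should be tightened if this is to stand on its own: (i) Zhang's Nakai--Moishezon (about \emph{ample} Hermitian line bundles and Minkowski minima) is not the right tool for producing the section $t$; what you need is that bigness means $\operatorname{vol}>0$, plus continuity of the arithmetic volume, so that perturbing $m\overline B$ by $-\overline{\mathcal O}_S(\delta)$ at the prescribed finite set $S$ of places keeps it big and yields a small section with strict bounds at $S$; (ii) the inequality ``$\eta\le\tfrac{\varepsilon_0}{2}\log p$'' presumes $\sup\|t\|_p\le 1/p$, which is not automatic for a general adelic $\overline B$ --- the correct requirement is $\eta\le -\tfrac{\varepsilon_0}{2}\log\sup\|t\|_p$, which is all you actually need since $\sup\|t\|_p<1$ at the finitely many $p\notin U$; and (iii) the formal power $t^{\otimes\varepsilon_0/2}$ and the ``defining constants'' of the metrized trivial bundle should be replaced by an integer-scaled statement and by sup/inf bounds on the ratio of metrics, which is a function on $X_p^{\mathrm{an}}$ rather than a constant. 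None of these is a fatal gap; they are bookkeeping.
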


\begin{proof}
	There exists a sequence $(\mathcal{X}_n,\overline{\mathcal{N}}_n)$ such that $\|\cdot\|_{n,p}/\|\cdot\|_p = 1$ for all but finitely many $p$ and $\|\cdot\|_{n,p}/\|\cdot\|_p \longrightarrow 1$ uniformly for the other $p$.
	
	Since $\overline{N}$ is not pseudo-effective (and that being not pseudo-effective is an open condition), there exists $\varepsilon>0$ such that when $n$ is large, $\overline{\mathcal{N}}_n(\varepsilon)$ is not pseudo-effective.
	
	For this $\varepsilon$, when $n$ is large we will have $\Big| h_{\overline{\mathcal{N}}_n} (x) - h_{\overline{N}}(x) \Big| < \varepsilon/2$ for every $x \in X(\overline{\mathbb{Q}})$, so $h_{\overline{\mathcal{N}}_n(\varepsilon)} (x) = h_{\overline{\mathcal{N}}_n} (x) + \varepsilon > h_{\overline{N}}(x) + \varepsilon/2$. In particular, $\operatorname{ess}(\overline{\mathcal{N}}_n(\varepsilon)) > \operatorname{ess}(\overline{N})$.
\end{proof}

\begin{cor}
\label{main2}
	Let $X/\mathbb{Q}$ be an integral projective variety and $\overline N$ be an adelic line bundle. Then $\operatorname{ess}(\overline N) \geq 0$ $\Longrightarrow $ $\overline N$ pseudo-effective.
\end{cor}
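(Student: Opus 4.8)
The plan is to prove this by contraposition, reducing the adelic assertion to the Hermitian case already established in Corollary \ref{pseff111}. So suppose $\overline N$ is an adelic line bundle on $X/\mathbb{Q}$ that is \emph{not} pseudo-effective; the goal is to deduce $\operatorname{ess}(\overline N) < 0$.

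First I would feed $\overline N$ into Lemma \ref{BallayReduction} (Ballay's approximation lemma): since $\overline N$ fails to be pseudo-effective, there is a Hermitian model $(\mathcal{X},\overline{\mathcal{N}})$ of $(X,N)$ with the two properties that $\overline{\mathcal{N}}$ is still not pseudo-effective and that the essential minimum strictly increases, $\operatorname{ess}(\overline{\mathcal{N}}) > \operatorname{ess}(\overline N)$. Next I would apply Corollary \ref{pseff111} to the arithmetic variety $\mathcal{X}$ and the Hermitian line bundle $\overline{\mathcal{N}}$: its contrapositive says that a Hermitian line bundle which is not pseudo-effective must have $\operatorname{ess}(\overline{\mathcal{N}}) < 0$. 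Chaining the two inequalities yields $\operatorname{ess}(\overline N) < \operatorname{ess}(\overline{\mathcal{N}}) < 0$, which is exactly the contrapositive of the statement. (Equivalently: if $\operatorname{ess}(\overline N)\geq 0$ then already $\operatorname{ess}(\overline{\mathcal{N}}) > 0$, so Corollary \ref{pseff111} makes $\overline{\mathcal{N}}$ pseudo-effective, contradicting the conclusion of Lemma \ref{BallayReduction}.)

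I do not expect any genuine obstacle at this last step: all of the substance is upstream, in the arithmetic Demailly approximation theorem (Theorem \ref{full arithmetic Demailly approximation}) feeding into Theorem \ref{main}, and in Theorem \ref{main} combined with Ikoma's arithmetic BDPP theorem yielding Corollary \ref{pseff111}. The only delicate point is the passage from adelic data to a single Hermitian model, and that is precisely what Lemma \ref{BallayReduction} records — its proof merely exploits that non--pseudo-effectivity is an open condition together with the uniform convergence of the height functions of the Hermitian models $\overline{\mathcal{N}}_n(\varepsilon)$ to that of $\overline N$, so that a uniform $\varepsilon$-shift both destroys pseudo-effectivity and strictly raises the essential minimum. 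I would also remark that only this half of Yuan's conjecture is obtained; the reverse implication $\overline N$ pseudo-effective $\Longrightarrow \operatorname{ess}(\overline N)\geq 0$ is not addressed by this circle of ideas and remains open in general.
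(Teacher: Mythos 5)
Your proof is correct and matches the paper's argument: both apply Lemma \ref{BallayReduction} to replace the non–pseudo-effective adelic $\overline N$ by a Hermitian model $\overline{\mathcal{N}}$ that is still not pseudo-effective yet has strictly larger essential minimum, and then invoke Corollary \ref{pseff111} to reach a contradiction (equivalently, to conclude $\operatorname{ess}(\overline N)<\operatorname{ess}(\overline{\mathcal{N}})<0$). The only difference is that you phrase it as contraposition while the paper phrases it as proof by contradiction, which is an immaterial reformulation.
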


\begin{proof}
	If not, then by the above lemma we could produce a Hermitian line bundle which is not pseudo-effective but with positive essential minimum, contradicting Corollary \ref{pseff111}.
\end{proof}

\appendix
\section{Adelic line bundles on quasi-projective varieties}
The theory of adelic line bundles on quasi-projective varieties is developed in Yuan-Zhang's book \cite{yuan_adelic_2022} and played an important role in Yuan's proof of the uniform Bogomolov conjecture \cite{yuan_arithmetic_2022}.

We prove in this section that
\begin{thm}
	Let $X/\mathbb{Q}$ be an integral quasi-projective variety and $\overline{N}$ be an adelic line bundle. Then $\operatorname{ess}(\overline N) \geq 0$ $\Longrightarrow $ $\overline N$ pseudo-effective.
	\label{mainthm_quasi-projective}
\end{thm}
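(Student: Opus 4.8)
The plan is to reduce the quasi-projective statement to the projective case already proved in Corollary \ref{main2}, by passing to a projective compactification and controlling the behavior of the essential minimum and of pseudo-effectivity along the boundary. In the framework of \cite{yuan_adelic_2022}, an adelic line bundle $\overline{N}$ on a quasi-projective $X/\mathbb{Q}$ is, by definition, a limit (in a suitable boundary topology) of model adelic line bundles, each of which lives on a projective model; more precisely, one fixes an open immersion $X \hookrightarrow \overline{X}$ into an integral projective variety and works with the category of adelic line bundles compatible with this compactification. So the first step is to choose such a compactification $\overline{X}$ and to represent $\overline{N}$ by a Cauchy sequence $\overline{N}_k$ of adelic line bundles extending to $\overline{X}$, with $\overline{N}_k \to \overline{N}$ in the boundary topology.

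Next I would argue that $\operatorname{ess}(\overline{N}) \geq 0$ forces, for every $\varepsilon > 0$, the projective extension $\overline{N}_k$ (for $k$ large) to satisfy $\operatorname{ess}(\overline{N}_k) \geq -\varepsilon$ once we twist appropriately by a boundary divisor; the key input here is that the height functions $h_{\overline{N}_k}$ converge uniformly to $h_{\overline{N}}$ on $X(\overline{\mathbb{Q}})$ away from the boundary, together with the fact (to be checked from the Yuan–Zhang definitions) that the essential minimum on $X$ agrees with the essential minimum of any projective extension up to a controllable error coming from the boundary. Concretely, I expect to use a big adelic line bundle $\overline{B}$ on $\overline{X}$ supported near the boundary so that $\overline{N}_k + \tfrac{1}{n}\overline{B}$ has essential minimum $\geq 0$ on the projective model. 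Then Corollary \ref{main2} applies to $\overline{N}_k + \tfrac1n\overline{B}$ and gives pseudo-effectivity on $\overline{X}$.

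The final step is to transport pseudo-effectivity back from $\overline{X}$ to $X$. Here I would invoke that pseudo-effectivity in the quasi-projective sense of \cite{yuan_adelic_2022} is precisely the statement that $\overline{N} + \tfrac1n\overline{B}'$ is big for every big $\overline{B}'$ and every $n$, and that bigness is stable under the boundary limit; so pseudo-effectivity of the projective extensions $\overline{N}_k + \tfrac1n\overline{B}$, letting $k \to \infty$ and the boundary twist $\to 0$, yields pseudo-effectivity of $\overline{N}$ on $X$. The converse implication in Yuan's conjecture is not needed.

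The main obstacle I anticipate is \emph{bookkeeping with the boundary}: making sure that (i) twisting by the auxiliary boundary line bundle genuinely upgrades $\operatorname{ess} \geq 0$ on $X$ to $\operatorname{ess} \geq 0$ on the projective model — this requires knowing that points of $X(\overline{\mathbb{Q}})$ near the boundary do not drag the essential minimum of the extension below zero, which is where the Yuan–Zhang boundary topology and the definition of $\operatorname{ess}$ for the non-proper case must be used carefully — and (ii) that the limit over $k$ interacts correctly with pseudo-effectivity. Everything else is a formal consequence of Corollary \ref{main2} and the definitions. An alternative, possibly cleaner route would be to run the arithmetic BDPP/intersection argument of Theorem \ref{main} directly in the quasi-projective setting using Yuan–Zhang intersection theory, but reducing to the already-established projective case seems to require the least new machinery.
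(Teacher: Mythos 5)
Your overall strategy --- reduce to the projective case via the model description of adelic line bundles in \cite{yuan_adelic_2022} --- is the same as the paper's, but the paper's execution is by contrapositive, which sidesteps precisely the two obstacles you flag at the end and turns them into non-issues.

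The paper's appendix argues as follows: if $\overline N$ is \emph{not} pseudo-effective, then neither is $\overline N(\varepsilon)$ for some $\varepsilon > 0$ (non-pseudo-effectivity is open). Representing $\overline N$ by $(\mathcal X_i,\overline{\mathcal N}_i)$ on a quasi-projective model, one deduces from Lemma \ref{4} (contrapositively) that for $i$ large the Hermitian line bundle $\overline{\mathcal N}_i(\varepsilon)$ on the projective model $\mathcal X_i$ is not pseudo-effective. The uniform convergence $|h_{\overline N}(x) - h_{\overline{\mathcal N}_i}(x)| < \varepsilon/2$ on $X(\overline{\mathbb Q})$, combined with the shift by $\varepsilon$, forces $\operatorname{ess}(h_{\overline{\mathcal N}_i(\varepsilon)},X) > \operatorname{ess}(h_{\overline N},X)$. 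Since $X$ is a dense open in $\mathcal X_{i,\mathbb Q}$, the essential minimum of the extension is unchanged when computed on $X$ rather than $\mathcal X_{i,\mathbb Q}$, so Corollary \ref{main2} applied to $\overline{\mathcal N}_i(\varepsilon)$ gives $\operatorname{ess}(h_{\overline N},X) < 0$, done. Your worry (i), that boundary points might ``drag down'' the essential minimum of the projective extension, does not arise in this direction precisely because the boundary $\mathcal X_{i,\mathbb Q} \setminus X$ is a proper closed subset and therefore irrelevant to $\operatorname{ess}$; you were worrying about the wrong thing.

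Your worry (ii) is the genuine content: ``bigness/pseudo-effectivity is stable under the boundary limit'' is exactly Lemma \ref{4}, and in your forward direction one would need the \emph{converse} implication too, which is not obvious. Running the argument contrapositively means one only needs the easy direction of that stability (if all projective models are pseudo-effective then so is the limit, or rather its contrapositive), so isolating Lemma \ref{4} and taking the contrapositive is the step that makes the reduction close. Also note that the twist the paper uses is the metric rescaling $\overline N(\varepsilon)$, which shifts all heights by the constant $\varepsilon$; your proposed twist by a big line bundle ``supported near the boundary'' is not the right mechanism, since it would perturb heights nonuniformly and you would lose control of the comparison $h_{\overline{\mathcal N}_i}$ vs.\ $h_{\overline N}$ that the argument relies on.
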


Readers should consult \cite{yuan_adelic_2022} for the definition of adelic line bundles (on quasi-projective varieties), volume and bigness.

\begin{definition}[pseudo-effective]
	Let $X/\mathbb{Q}$ be an integral quasi-projective variety and $\overline L$ be an adelic line bundle. We say $\overline L$ is \emph{pseudo-effective} if for any big adelic line bundle $\overline B$ and any $n>0$, $\overline{L}+\tfrac 1n \overline{B}$ is still big.
\end{definition}

\begin{lem}
	Say $\overline{L}$ is represented by $( \mathcal{X}_i, \overline{\mathcal{L}}_i )$ on a quasi-projective model $\mathcal{U}$. If $\overline{\mathcal{L}}_i$ is pseudo-effective (as a Hermitian line bundle, on the projective model $\mathcal{X}_i$) for all $i$, then $\overline{L}$ is pseudo-effective.
	\label{4}
\end{lem}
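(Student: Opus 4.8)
The plan is to reduce bigness of $\overline L+\tfrac1n\overline B$ on the quasi-projective model $\mathcal U$ to a statement about Hermitian line bundles on a single projective compactification, where one can simply invoke the definition of pseudo-effectivity for the $\overline{\mathcal L}_i$.

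First I would recall from \cite{yuan_adelic_2022} the model description of bigness on $\mathcal U$: after fixing a projective compactification $\mathcal X$ of $\mathcal U$ together with its (effective) boundary divisor $\overline D_0$, an adelic line bundle $\overline B$ on $\mathcal U$ represented by a Cauchy tower $(\mathcal X_j,\overline{\mathcal B}_j)$ is big if and only if there is an index $j$ and a real number $\epsilon>0$ such that the Hermitian line bundle $\overline{\mathcal B}_j-\epsilon\,\overline D_0$ is big on $\mathcal X_j$. So let $\overline B$ be big and $n>0$, and fix such $j$ and $\epsilon$. Passing to a common refinement $\varpi\colon\mathcal X'\to\mathcal X_j$ (normal and generically smooth) that also dominates a model $\mathcal X_i$ appearing in the tower of $\overline L$, and replacing every bundle by its pull-back, I may assume $\overline{\mathcal L}_i$ and $\overline{\mathcal B}_j$ live on one and the same arithmetic variety $\mathcal X'$: here $\varpi^{*}\overline{\mathcal L}_i$ is still pseudo-effective (pull-back along a birational morphism preserves pseudo-effectivity, by the birational shape of Ikoma's arithmetic BDPP criterion), $\varpi^{*}\overline D_0$ is still an effective divisor supported on the boundary, and $\varpi^{*}\overline{\mathcal B}_j-\epsilon\,\varpi^{*}\overline D_0$ is still big (birational pull-back preserves the volume).

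Now work on $\mathcal X'$ and set $\overline{\mathcal M}=\overline{\mathcal B}_j-\tfrac{\epsilon}{2}\,\overline D_0$. Since $\overline{\mathcal B}_j-\epsilon\,\overline D_0$ is big and $\tfrac{\epsilon}{2}\,\overline D_0$ is effective, $\overline{\mathcal M}$ is big on $\mathcal X'$. As $\overline{\mathcal L}_i$ is pseudo-effective on $\mathcal X'$, the very definition of pseudo-effectivity applied to the big class $\overline{\mathcal M}$ and the positive number $n$ shows that $\overline{\mathcal L}_i+\tfrac1n\overline{\mathcal M}$ is big on $\mathcal X'$. But
\[
\overline{\mathcal L}_i+\tfrac1n\overline{\mathcal M}=\Big(\overline{\mathcal L}_i+\tfrac1n\overline{\mathcal B}_j\Big)-\tfrac{\epsilon}{2n}\,\overline D_0,
\]
and $\overline{\mathcal L}_i+\tfrac1n\overline{\mathcal B}_j$ is a member of a Cauchy tower representing $\overline L+\tfrac1n\overline B$ on $\mathcal U$ (the termwise sum of two towers on a common sequence of models is again Cauchy). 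Hence, by the model description of bigness recalled above, $\overline L+\tfrac1n\overline B$ is big on $\mathcal U$. Since $\overline B$ and $n$ were arbitrary, $\overline L$ is pseudo-effective.

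I expect the only real work to be bookkeeping inside the Yuan--Zhang formalism: locating the precise ``big $\Longleftrightarrow$ big-after-subtracting-a-boundary-multiple-on-a-model'' statement and checking that pseudo-effectivity, bigness and the boundary divisor behave correctly under the birational refinements used to place $\overline L$ and $\overline B$ on a common compactification. The genuinely arithmetic content is light, since everything is bootstrapped from the already-established projective facts (Ikoma's BDPP criterion and the definition of pseudo-effectivity for Hermitian line bundles).
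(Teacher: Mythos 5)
The paper states Lemma~\ref{4} without proof, so there is no written argument to compare yours against; I will therefore assess your proposal on its own terms. Your argument is correct and, I believe, captures the intended mechanism. The key ingredient is the ``model description of bigness'': an adelic line bundle $\overline B$ on $\mathcal U$ with Cauchy tower $(\mathcal X_j,\overline{\mathcal B}_j)$ has positive arithmetic volume if and only if $\overline{\mathcal B}_j-\epsilon\,\overline D_0$ is big on some $\mathcal X_j$ for some $\epsilon>0$. This is not quoted verbatim in \cite{yuan_adelic_2022} but follows directly from the limit formula $\widehat{\operatorname{vol}}(\overline B)=\lim_j\widehat{\operatorname{vol}}(\overline{\mathcal B}_j)$ there, together with Ikoma's continuity of arithmetic volume (for the ``only if'' direction) and monotonicity of $\widehat{\operatorname{vol}}$ under adding the effective bundle $\overline D_0$ combined with the Cauchy estimate $\overline{\mathcal B}_k\ge\overline{\mathcal B}_j-\epsilon\,\overline D_0$ for $k\gg j$ (for the ``if'' direction). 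Once that equivalence is accepted, your manipulation $\overline{\mathcal L}_i+\tfrac1n\overline{\mathcal M}=(\overline{\mathcal L}_i+\tfrac1n\overline{\mathcal B}_j)-\tfrac{\epsilon}{2n}\overline D_0$, with $\overline{\mathcal M}=\overline{\mathcal B}_j-\tfrac{\epsilon}{2}\overline D_0$ big and $\overline{\mathcal L}_i$ pseudo-effective, is exactly right.

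Two small points of bookkeeping you should tighten. First, when you say $\overline{\mathcal L}_i+\tfrac1n\overline{\mathcal B}_j$ ``is a member of a Cauchy tower'' for $\overline L+\tfrac1n\overline B$, you should have already aligned the two towers onto one common sequence of models and a single index, i.e.\ take $i=j$; otherwise one more appeal to the Cauchy estimate is needed to move from the mixed-index term to a genuine term of the sum tower, at the cost of shrinking $\epsilon$. Second, the claim that birational pull-back preserves pseudo-effectivity of Hermitian line bundles deserves a one-line justification (as you note, it is immediate from the birational formulation of Ikoma's arithmetic BDPP criterion, or from birational invariance of $\widehat{\operatorname{vol}}$ together with the projection formula). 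Finally, a slightly more economical route to the same conclusion is available: the inequality $\widehat{\operatorname{vol}}(\overline{\mathcal L}_k+\overline{\mathcal M})\ge\widehat{\operatorname{vol}}(\overline{\mathcal M})$ for $\overline{\mathcal L}_k$ pseudo-effective (obtained by writing $\overline{\mathcal M}=(\overline{\mathcal M}-\tfrac1m\overline{\mathcal A})+\tfrac1m\overline{\mathcal A}$ with $\overline{\mathcal A}$ ample and letting $m\to\infty$) gives directly $\widehat{\operatorname{vol}}(\overline L+\tfrac1n\overline B)=\lim_k\widehat{\operatorname{vol}}(\overline{\mathcal L}_k+\tfrac1n\overline{\mathcal B}_k)\ge\tfrac1{n^{\dim\mathcal X}}\widehat{\operatorname{vol}}(\overline B)>0$, bypassing the explicit boundary-divisor gymnastics. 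Either route is fine; yours is correct.
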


Theorem \ref{mainthm_quasi-projective} follows immediately after
\begin{lem}
	Let $X/\mathbb{Q}$ be an integral quasi-projective variety and $\overline N$ be an adelic line bundle that is not pseudo-effective. Then there exists a projective arithmetic variety $\mathcal{X}$ and a Hermitian line bundle $\overline{\mathcal{N}}$ on $\mathcal{X}$ such that
	\begin{itemize}
		\item $\overline{\mathcal{N}}$ is not pseudo-effective.
		\item $\operatorname{ess}(h_{ \overline{\mathcal{N}} }, \mathcal{X}_\mathbb{Q}) = \operatorname{ess}(h_{ \overline{\mathcal{N}} }, X) > \operatorname{ess}(h_{ \overline{N}}, X)$.
	\end{itemize}
\end{lem}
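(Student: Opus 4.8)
The plan is to run the argument parallel to the projective case (Lemma~\ref{BallayReduction}, after \cite{ballay_successive_2021}); the one genuinely new feature is the non-compactness of $X$, which I would handle by perturbing along the boundary divisor of the Yuan--Zhang formalism \cite{yuan_adelic_2022}. By definition $\overline N$ is a limit $\overline N=\lim_i\overline{\mathcal L}_i$ of model adelic line bundles on a fixed quasi-projective model $\mathcal U/\mathbb Z$, each $\overline{\mathcal L}_i$ being a Hermitian line bundle on a projective model $\mathcal X_i$ of $\mathcal U$, the convergence being that of the boundary topology attached to a fixed effective model boundary divisor $\overline D_0$; after enlarging $\overline D_0$ we may assume $\overline D_0\ge\overline{\mathcal O}(1)$, where $\overline{\mathcal O}(t)$ denotes the trivial line bundle whose height function is the constant $t$. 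After reindexing, $-\varepsilon_i\overline D_0\le\overline{\mathcal L}_i-\overline N\le\varepsilon_i\overline D_0$ for some $\varepsilon_i>0$ with $\varepsilon_i\to 0$. I would then set $\overline{\mathcal M}_i:=\overline{\mathcal L}_i+2\varepsilon_i\overline D_0$, so that $\overline{\mathcal O}(\varepsilon_i)\le\varepsilon_i\overline D_0\le\overline{\mathcal M}_i-\overline N\le 3\varepsilon_i\overline D_0$; in particular $\overline{\mathcal M}_i\to\overline N$ in the boundary topology, and $\overline{\mathcal M}_i-\overline N$ is effective.

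From these bounds I would extract the two properties I need. First, by superadditivity and monotonicity of the essential minimum (standard, and available in the Yuan--Zhang setting), $\operatorname{ess}(h_{\overline{\mathcal M}_i},X)\ge\operatorname{ess}(h_{\overline N},X)+\operatorname{ess}(h_{\overline{\mathcal M}_i-\overline N},X)\ge\operatorname{ess}(h_{\overline N},X)+\operatorname{ess}(h_{\overline{\mathcal O}(\varepsilon_i)},X)=\operatorname{ess}(h_{\overline N},X)+\varepsilon_i$, which is strictly bigger. Second, since being not pseudo-effective is an open condition in the boundary topology, and $\overline{\mathcal M}_i\to\overline N$ with $\overline N$ not pseudo-effective, the bundle $\overline{\mathcal M}_i$ is not pseudo-effective for $i$ large; fix such an index $i_0$.

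It then remains to descend $\overline{\mathcal M}_{i_0}$ to an honest Hermitian line bundle. As $\overline{\mathcal L}_{i_0}$ and $\overline D_0$ are model objects, after passing to a common normal projective model $\mathcal X$ of $\mathcal U$ dominating $\mathcal X_{i_0}$ and the model carrying $\overline D_0$ (and replacing $2\varepsilon_{i_0}$ by a nearby rational, or simply working with $\mathbb Q$- or $\mathbb R$-Hermitian line bundles as in \cite{ballay_successive_2021}), $\overline{\mathcal M}_{i_0}$ is the model adelic line bundle associated to a Hermitian line bundle $\overline{\mathcal N}$ on $\mathcal X$. By Lemma~\ref{4} applied to the constant sequence $(\mathcal X,\overline{\mathcal N})$, were $\overline{\mathcal N}$ pseudo-effective on $\mathcal X$ then $\overline{\mathcal M}_{i_0}$ would be pseudo-effective on $X$; hence $\overline{\mathcal N}$ is not pseudo-effective. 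The heights of points of $X(\overline{\mathbb Q})$ agree for $\overline{\mathcal N}$ and for $\overline{\mathcal M}_{i_0}$, and since $X$ is a dense Zariski-open of $\mathcal X_\mathbb Q$ one has $\operatorname{ess}(h_{\overline{\mathcal N}},\mathcal X_\mathbb Q)=\operatorname{ess}(h_{\overline{\mathcal N}},X)$ (routine: the essential minimum of a variety equals that of any dense open, by running the defining supremum over closed subsets containing $\mathcal X_\mathbb Q\setminus X$). Combining, $\operatorname{ess}(h_{\overline{\mathcal N}},\mathcal X_\mathbb Q)=\operatorname{ess}(h_{\overline{\mathcal N}},X)=\operatorname{ess}(h_{\overline{\mathcal M}_{i_0}},X)>\operatorname{ess}(h_{\overline N},X)$, which is the assertion.

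The heart of the matter, and the only place the quasi-projective case genuinely departs from Ballay's argument, is that along the boundary the height functions $h_{\overline{\mathcal L}_i}$ need not converge uniformly to $h_{\overline N}$ on $X(\overline{\mathbb Q})$ — points of $X$ may drift off to the boundary — so there is no analogue of Ballay's uniform comparison $|h_{\overline{\mathcal N}_n}-h_{\overline N}|<\varepsilon/2$. The perturbation by $\overline D_0$ is exactly what repairs this: it controls the discrepancy through the boundary partial order and, since $\overline D_0\ge\overline{\mathcal O}(1)$, forces the height up by a definite amount $\varepsilon_i$ while keeping $\overline{\mathcal M}_i$ inside the class of model adelic line bundles. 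I expect the remaining work to be bookkeeping — checking that all inequalities are read in the Yuan--Zhang boundary sense, that $\overline D_0$ can be chosen dominating $\overline{\mathcal O}(1)$, and that non-pseudo-effectivity is open in the boundary topology.
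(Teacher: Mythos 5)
Your proof takes a genuinely different, and I would argue more careful, route than the paper's own. The paper's proof of this lemma runs the projective argument of Lemma~\ref{BallayReduction} essentially verbatim: it asserts that for $i$ large one has $\bigl|h_{\overline N}(x)-h_{\overline{\mathcal N}_i}(x)\bigr|<\varepsilon/2$ for every $x\in X(\overline{\mathbb Q})$, and from this deduces $\operatorname{ess}(h_{\overline{\mathcal N}_i(\varepsilon)},X)>\operatorname{ess}(h_{\overline N},X)$. But in the Yuan--Zhang quasi-projective formalism the Cauchy estimate only yields $\bigl|h_{\overline{\mathcal N}_i}(x)-h_{\overline N}(x)\bigr|\le\varepsilon_i\,h_{\overline D_0}(x)$, and $h_{\overline D_0}$ is unbounded on $X(\overline{\mathbb Q})$ once $X$ is genuinely open, so the convergence of heights is pointwise but not uniform --- precisely the obstruction you single out. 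Your replacement of the constant twist $\overline{\mathcal O}(\varepsilon)$ by the boundary twist $2\varepsilon_i\overline D_0$, so that $\overline{\mathcal M}_i-\overline N\ge\varepsilon_i\overline D_0\ge\overline{\mathcal O}(\varepsilon_i)$ as an effectivity relation and hence $h_{\overline{\mathcal M}_i}\ge h_{\overline N}+\varepsilon_i$ on a dense open, turns the failed uniform comparison into a legitimate pointwise one, and yields $\operatorname{ess}(\overline{\mathcal M}_i)\ge\operatorname{ess}(\overline N)+\varepsilon_i$ without appealing to superadditivity abstractly. Two small points to pin down: first, that Yuan--Zhang's normalization of the boundary divisor really does permit $\overline D_0\ge\overline{\mathcal O}(1)$ after rescaling (their boundary divisor is strictly effective, so this should be harmless); second, your invocation of ``not pseudo-effective is open in the boundary topology'' is stronger than what Lemma~\ref{4} literally provides, but it can be replaced by applying Lemma~\ref{4} to tails of the representing sequence, which gives infinitely many non-pseudo-effective $\overline{\mathcal M}_i$, and that suffices since the essential-minimum inequality holds for every $i$.
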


\begin{proof}
	$\overline{N}$ is not pseudo-effective so there exists $\varepsilon>0$ s.t. $\overline{N}(\varepsilon)$ is still not pseudo-effective. Say $\overline{N}$ is represented by $( \mathcal{X}_i, \overline{\mathcal{N}}_i )$ on a quasi-projective model $\mathcal{U}$. Then $\overline{N}(\varepsilon)$ is represented by $( \mathcal{X}_i, \overline{\mathcal{N}}_i(\varepsilon))$.
	
	When $i$ is large, $\overline{\mathcal{N}}_i(\varepsilon)$ is not pseudo-effective (as a Hermitian line bundle, on the projective model $\mathcal{X}_i$) by Lemma \ref{4}.
	
	For this $\varepsilon$, when $i$ is large, we will have $\Big| h_{\overline{N}}(x) - h_{\overline{\mathcal{N}}_i} \Big|< \varepsilon/2$ for any $x \in X(\overline{\mathbb{Q}})$, so in particular $\operatorname{ess}(h_{ \overline{\mathcal{N}}_i(\varepsilon)}, X) > \operatorname{ess}(h_{ \overline{N}}, X)$.
\end{proof}

\bibliography{ref}
\bibliographystyle{alphaurl}

\end{document}